\definecolor{darkergreen}{rgb}{0.0, 0.5, 0.0}
\numberwithin{equation}{section}
\def\theequation{\arabic{section}.\arabic{equation}}
\newcommand{\be}{\begin{eqnarray}}
\newcommand{\ee}{\end{eqnarray}}
\newcommand{\ce}{\begin{eqnarray*}}
\newcommand{\de}{\end{eqnarray*}}
\newtheorem{theorem}{Theorem}[section]
\newtheorem{lemma}[theorem]{Lemma}
\newtheorem{proposition}[theorem]{Proposition}
\newtheorem{Examples}[theorem]{Example}
\newtheorem{corollary}[theorem]{Corollary}
\newtheorem{definition}[theorem]{Definition}
\theoremstyle{definition}
\newtheorem{remark}[theorem]{Remark}
\newcommand{\assign}{:=}
\newcommand{\cdummy}{\cdot}
\newcommand{\mathd}{\mathrm{d}}
\newcommand{\tmmathbf}[1]{\ensuremath{\mathbf{#1}}}
\newcommand{\tmop}[1]{\ensuremath{\operatorname{#1}}}
\newcommand{\tmtextit}[1]{{\itshape{#1}}}
\DeclareMathOperator{\supp}{supp}
\def\T{\mathbb{T}^{3}}
\def\u{\mathbf{u}}
\def\[{{\Big[}}
\def\]{{\Big]}}
\def\<{{\langle}}
\def\>{{\rangle}}
\def\({{\Big(}}
\def\){{\Big)}}
\def\bx{{\mathbf{x}}}
\def\tr{\mathrm {tr}}
\def\dif{{\mathord{{\rm d}}}}
\def\no{\nonumber}
\def\={&\!\!=\!\!&}
\DeclareMathOperator*{\esssup}{esssup}
\def\mR{{\mathbb R}}
\def\mT{{\mathbb T}}
\def\mX{{\mathbb X}}
\def\1{{\mathbf{1}}}
\def\sC{{\mathscr C}}
\def\sP{{\mathscr P}}
\def\E{\mathbf E}
\def\geq{\geqslant}
\def\leq{\leqslant}
\def\div{\mathord{{\rm div}}}
\def\u{\mathbf{u}}
\def\[{{\Big[}}
\def\]{{\Big]}}
\def\<{{\langle}}
\def\>{{\rangle}}
\def\({{\Big(}}
\def\){{\Big)}}
\def\bx{{\mathbf{x}}}
\def\tr{\mathrm {tr}}
\def\dif{{\mathord{{\rm d}}}}
\def\no{\nonumber}
\def\={&\!\!=\!\!&}
\def\bt{\begin{theorem}}
\def\et{\end{theorem}}
\def\bl{\begin{lemma}}
\def\el{\end{lemma}}
\def\br{\begin{remark}}
\def\er{\end{remark}}
\def\bx{\begin{Examples}}
\def\ex{\end{Examples}}
\def\bd{\begin{definition}}
\def\ed{\end{definition}}
\def\bp{\begin{proposition}}
\def\ep{\end{proposition}}
\def\bc{\begin{corollary}}
\def\ec{\end{corollary}}
\def\geq{\geqslant}
\def\leq{\leqslant}
\def\div{\mathord{{\rm div}}}
 \def\R{\mathbb R}
 \def\R{\mathbb R}    
\def\N{\mathbb N}  
\def\<{\langle} \def\>{\rangle}
\begin{document}

\title[On ill- and well-posedness  to stochastic 3D Euler equations]{On ill- and well-posedness of dissipative martingale solutions to stochastic 3D Euler equations}

\author{Martina Hofmanov\'a}
\address[M. Hofmanov\'a]{Fakult\"at f\"ur Mathematik, Universit\"at Bielefeld, D-33501 Bielefeld, Germany}
\email{hofmanova@math.uni-bielefeld.de}

\author{Rongchan Zhu}
\address[R. Zhu]{Department of Mathematics, Beijing Institute of Technology, Beijing 100081, China; Fakult\"at f\"ur Mathematik, Universit\"at Bielefeld, D-33501 Bielefeld, Germany}
\email{zhurongchan@126.com}

\author{Xiangchan Zhu}
\address[X. Zhu]{ Academy of Mathematics and Systems Science,
Chinese Academy of Sciences, Beijing 100190, China; Fakult\"at f\"ur Mathematik, Universit\"at Bielefeld, D-33501 Bielefeld, Germany}
\email{zhuxiangchan@126.com}
\thanks{Supported in part by NSFC (No. 11671035, No. 11771037, No. 11922103). Financial support by the DFG through the CRC
1283 ``Taming uncertainty and profiting from randomness and low regularity in analysis, stochastics and their
applications'' and support by key Lab of Random Complex Structures and Data Science, Chinese Academy of Science are gratefully acknowledged.}

\begin{abstract}
We are concerned with the question of  well-posedness of stochastic three dimensional incompressible Euler equations. In particular, we introduce a novel class of dissipative solutions and show that (i) existence; (ii) weak--strong uniqueness; (iii) non-uniqueness in law; (iv) existence of a strong Markov solution; (v) non-uniqueness of strong Markov solutions; all hold true within this class. Moreover, as a byproduct of (iii) we obtain existence and non-uniqueness of probabilistically strong and analytically weak solutions defined up to a stopping time and satisfying an energy inequality.
\end{abstract}

\subjclass[2010]{60H15; 35R60; 35Q30}
\keywords{stochastic Euler system, dissipative solutions, non-uniqueness in law, convex integration, strong Markov selection}

\date{\today}

\maketitle

\tableofcontents

\section{Introduction}

The   mathematics community is kept intrigued by the  questions of ill- and/or well-posedness of equations in fluid dynamics. A substantial progress has been experienced in the past ten years or so, starting from the groundbreaking results by De Lellis and Sz{\'e}kelyhidi~Jr. \cite{DelSze2,DelSze3,DelSze13}. In these works, the method of convex integration was used in order to prove non-uniqueness of weak solutions to  Euler equations. Furthermore, non-uniqueness was established among weak solutions dissipating energy which is one of the well-accepted criteria for the selection of physically relevant solutions. The method was then further developed and successfully applied to a number of other fluid dynamics models. In particular also to the Navier--Stokes system in three dimensions by Buckmaster and Vicol \cite{BV19a} and Buckmaster, Colombo and Vicol \cite{BCV18}, where it permitted to prove non-uniqueness for weak solutions not satisfying the energy inequality, i.e. not Leray solutions. We refer to the excellent review articles by Buckmaster and Vicol \cite{BV19,BV20} for a gentle introduction and further references.

In view of these issues, there has been a hope that a certain stochastic perturbation can provide a regularizing effect of the underlying PDE dynamics. And indeed, some positive results have been achieved. Flandoli and Luo \cite{FL19} showed that a noise of transport type prevents a vorticity blow-up in the Navier--Stokes equations. Flandoli, Hofmanov\'a, Luo and Nilssen \cite{FHLN20} then showed that the regularization is even provided by deterministic vector fields. A linear multiplicative noise prevents the blow up of the velocity with high probability for  the three dimensional Euler and Navier--Stokes system as well, as shown by Glatt-Holtz and Vicol \cite{GHV14} and R\"ockner, Zhu and Zhu \cite{RZZ14}, respectively. Noise also has a beneficial impact when it comes to long time behavior and ergodicity. Da Prato and Debussche \cite{DD03} obtained a unique ergodicity for three dimensional stochastic Navier--Stokes equations with non-degenerate additive noise. The theory of Markov selections by Flandoli and Romito \cite{FR08} provides an alternative approach which also allowed to prove ergodicity for every Markov solution, see Romito \cite{Ro08}.

Quite the contrary, in our previous work  \cite{HZZ19} we  proved a negative result: non-uniqueness in law holds for the stochastic three dimensional Navier--Stokes equations with additive or linear multiplicative noise in a class of analytically weak solutions. This in particular shows that the noise of \cite{ DD03,FR08, Ro08, RZZ14} mentioned above is of no help for the initial value problem in this solution framework. The proof relies on a stochastic variant of the convex integration method together with a general probabilistic construction developed in order to extend solutions defined up to a stopping time to the whole time interval $[0,\infty)$. This way, the convex integration permits to construct solutions which fail the corresponding energy inequality. The approach of \cite{HZZ19} was  applied by Yamazaki~\cite{Ya20a,Ya20b} to obtain  non-uniqueness in law for stochastic Navier--Stokes equations in a three dimensional  hyperviscous and a two dimensional fractional dissipative setting.

For completeness, let us also mention that prior to \cite{HZZ19} the convex integration has already been applied in a stochastic setting, namely, to the isentropic Euler system by Breit, Feireisl and Hofmanov\'a \cite{BFH20} and to the full Euler system by Chiodaroli, Feireisl and Flandoli \cite{CFF19}. Early works on stochastic Euler equations treat only the two dimensional case (see e.g. \cite{Bes99, BF99, BP01, GHV14, BFM16}). The three-dimensional case has been treated in \cite{MV00, Kim09, GHV14, CFH19}. In particular, Glatt-Holtz and Vicol \cite{GHV14} obtained local well-posedness of strong solutions to stochastic Euler equations in two and three dimensions, global well-posedness in two dimensions for additive and linear multiplicative noise, and  the above mentioned regularization by linear multiplicative noise in the three dimensional setting. Local well-posedness for  three dimensional stochastic compressible Euler equations was proved  by Breit and Mensah \cite{BM19}.

\medskip

In the present paper, we are concerned with stochastic Euler equations governing the time evolution of the velocity $u$ of an inviscid fluid  on the three dimensional torus $\mathbb{T}^3$. The system reads as
\begin{equation}\label{el}
\aligned
 \dif u+\div(u\otimes u) \dif t+\nabla P \dif t&=G(u)\dif B,
\\
\div u&=0,
\endaligned
\end{equation}
where $P$ stands for the corresponding pressure and the right hand side represents a random external force acting on the fluid. It is driven by a cylindrical Wiener process $B$ defined on some probability space and the diffusion coefficient $G$ satisfies suitable assumptions, see Section~\ref{s:not} for more details.

Our goal is to investigate the ill/well-posedness of this system from various perspectives. More precisely, we aim at finding one stable solution concept which provides a  suitable framework to study  the questions of existence, (non-)uniqueness as well as Markov selections. To this end, we introduce the notion of \emph{dissipative martingale solution}. Roughly speaking, it corresponds to measure--valued solutions weak in the probabilistic sense and satisfying a version of an energy inequality. Although measure--valued solutions have been extensively studied  in the deterministic literature (see e.g. \cite{DPM87,Li96}),  their formulation becomes rather challenging in the stochastic setting. A first attempt in the context of the Euler equations was done by Breit and Moyo \cite{BM20}, who also proved existence and weak--strong uniqueness. Nevertheless, it turns out that the question of existence of a Markov selection poses severe restrictions on the definition of solution. Consequently, the framework from \cite{BM20} cannot be used and new ideas are required.

The key insight which we put forward is twofold. On the one hand, we present  a novel formulation of the energy inequality and, on the other hand, we include an energy variable as a part of the solution. This is necessary due to
the presence of the so-called energy sinks which is an intrinsic property of the Euler equations. More precisely, the $L^{2}$-norm of the initial value  itself does not contain all the necessary information on the actual energy in order to restart the system. In this sense, including an additional variable is a legitimate step which reflects the nature of the  equations. The principle ideas behind our definition of solution can be found in Section~\ref{s:p1}, where we refer the reader for more details and further notations.

With this in hand, let us  summarize our main results. The precise formulations can be found in the respective sections and in particular the precise assumptions on the noise coefficient $G$ are also stated there. At this stage, let us only mention that all our results apply to the additive noise case with  a sufficient regularity of $G$, while some results also allow for certain  possibly nonlinear  coefficient $G(u)$.

\begin{enumerate}
\item[(i)] {\bf Existence:} We prove that dissipative martingale solutions exist globally in time for diver\-gence-free initial conditions in $L^{2}$. The proof relies on a compactness argument combined with Jakubowski--Skorokhod's representation theorem. Due to the limited compactness of the Euler system it is necessary to work with dissipative rather than analytically weak solutions. Indeed, the latter one can only be shown to exists for certain initial conditions up to a certain stopping time. See Section~\ref{s:stab} and Section~\ref{s:ex}.
\item[(ii)] {\bf Weak--strong uniqueness:} We show that dissipative martingale solutions satisfy a weak--strong uniqueness principle. More precisely, if for some initial value there is an analytically strong solution defined on the  canonical path space up to a stopping time, then it coincides with all dissipative martingale solutions having the  same initial value. See Section~\ref{s:un}.
\item[(iii)] {\bf Non-uniqueness in law:} We apply the method of convex integration in order to construct infinitely many solutions to the stochastic Euler system which live up to a certain stopping time, are analytically weak, probabilistically strong and satisfy an energy inequality. Using the general probabilistic extension of solutions which we developed in \cite{HZZ19}, we  extend these solutions beyond the stopping time to the whole time interval $[0,\infty)$. Even though analytically weak before the stopping time, these solutions become only dissipative, i.e., measure--valued,  after the stopping time  due to the limitations of the general existence result. See Section~\ref{s:law}.
\item[(iv)] {\bf Existence of a strong Markov solution:} Our notion of dissipative martingale solution permits to select a system of solutions to the stochastic Euler system satisfying the strong Markov property. In addition, the solutions fulfil a version of the principle of maximal energy dissipation proposed  by Dafermos \cite{Daf4} in order to select the physically relevant solutions. Our proof makes use of the abstract Markov selection procedure by Krylov \cite{K73}.  The main idea is to include an additional datum into the selection procedure. See Section~\ref{s:6.2}.
\item[(v)] {\bf Non-uniqueness of strong Markov solutions:} Finally, we combine the result of non-uniqueness in law with the existence of a strong Markov solution and deduce non-uniqueness of  strong Markov selections. See Section~\ref{s:nonM}.
\end{enumerate}

The main contribution of our paper lies in the points (iii), (iv) and (v) and some novelties are also present in the point (ii). In particular, compared to the weak--strong uniqueness in the deterministic setting, the additional difficulties in the stochastic setting originate in the fact that the times of the energy sinks are generally random. Consequently, evaluation of the energy inequality in its usual form at the stopping time, up to which the analytically strong solution lives, becomes delicate. We overcome this issue by using a different form of the relative energy, and accordingly also a different form of the energy inequality in the definition of a solution, with the help of a continuous stochastic process $z$ rather than the kinetic energy itself, which is  defined only a.e. in time.

Regarding the point (iv), we recall that applications of Krylov's Markov selection to SPDEs can be found in \cite{FR08, GRZ09, BFH18markov}. In particular, Flandoli and Romito \cite{FR08} introduced a weaker notion of Markov property, the so-called almost sure Markov property. It means that the Markov property holds up to an exceptional set of deterministic times in $(0,\infty)$ having zero Lebesgue measure. They were able to show that the stochastic Navier--Stokes equations admit an almost sure Markov solution. The same issue appears for the stochastic compressible Navier--Stokes system in  \cite{BFH18markov} as well as for the models  in \cite{GRZ09}.
However, in these works it was not possible to obtain the usual Markov property, let alone the strong Markov property.

Needless to say that the stochastic Euler equations \eqref{el} represent a number of additional  difficulties when it comes to the construction of Markov solutions. More precisely, due to the presence of random energy sinks together with the limited compactness the method of \cite{FR08} does not apply. Our approach not only works for the Euler equations, it even permits
to obtain the strong Markov property.
This is precisely where  the process $z$ as part of the solution plays an essential role. It gives the necessary control of the kinetic energy of the system after solution trajectories are shifted in time, required by the so-called disintegration property. We note that the same ideas can also be applied in the (simpler) settings of \cite{FR08, GRZ09, BFH18markov} in order to construct strong Markov selections.

The core of the non-uniqueness in law in (iii) is the method of convex integration. Unlike in our previous work \cite{HZZ19} where the convex integration relied on an iteration procedure, we rely here on a Baire category argument by De Lellis and Sz{\'e}kelyhidi~Jr.~\cite{DelSze3}. The first application of this method in the stochastic setting was done in \cite{BFH20}. In the present paper, we extend the stochastic approach further in several aspects. In particular, we present two new versions of oscillatory lemmas, see Lemma~\ref{lem:osc} and Lemma~\ref{lem:oscs}. The former one permits to construct a subsolution with a prescribed energy at time $t=0$. That is, we are able to eliminate the initial jump of the energy present in \cite{BFH20}, which is essential in order to obtain solutions satisfying the energy inequality and weak--strong uniqueness principle.
The latter oscillatory lemma is applied in order to enforce the energy inequality at a given stopping time. In order to combine these two, we need to construct suitable energy function by applying the theory of Young's integration and a control of the iterated stochastic integral of the Wiener process before a stopping time, see Lemma~\ref{lem:osc2}.
This way we are able to construct infinitely many analytically weak solutions before the stopping time satisfying  a.e. in time the usual energy inequality  with a prescribed energy defect. Moreover, the prescribed energy depends on the solution itself.

Furthermore, our convex integration solutions are probabilistically strong, i.e. adapted to the~given Wiener process. This is the key property needed  to extend these solutions as dissipative martingale solutions beyond the stopping time and to deduce the non-uniqueness in law. Compared to the Navier--Stokes setting from \cite{HZZ19} where the energy inequality could not be fulfilled  by the convex integration solutions, this point requires a careful treatment in the extension of solutions in Section~\ref{s:appl}. Since we have to control the iterated stochastic integral of the Wiener process before the stopping time as mentioned above,
we also need to define the corresponding stopping time on the canonical path space. But now the  difficulty lies in how to define the iterated stochastic integral on the  path space without the use of any probability measure. Indeed, due to the low time regularity of the Wiener process,  the stochastic integral cannot be defined by purely analytical tools and probability theory is required  in a nontrivial way. We overcome this issue by using the energy equality obtained in the convex integration step to identify the corresponding stochastic integral. Applying the theory of Young's integration we are able to define the necessary stopping time on the~path space and to transfer the convex integration solutions to the path space where they can be extended to dissipative martingale solutions on $[0,\infty)$.

To conclude this introductory part, we note that the Markov property corresponds to the semiflow property in the deterministic setting. In other words, as a simple observation, our results translated to the deterministic setting imply in particular non-uniqueness of the associated semiflow, which to the best of our knowledge has not been known before, see Remark~\ref{semiflow} for more details.

\section{Notations}

\subsection{Function spaces}

  Throughout the paper, we use the notation $a\lesssim b$ if there exists a constant $c>0$ such that $a\leq cb$, and we write $a\simeq b$ if $a\lesssim b$ and $b\lesssim a$. Given a Banach space $E$ with a norm $\|\cdot\|_E$ and $T>0$, we write $C_TE=C([0,T];E)$ for the space of continuous functions from $[0,T]$ to $E$, equipped with the supremum norm $\|f\|_{C_TE}=\sup_{t\in[0,T]}\|f(t)\|_{E}$. We also use $CE$ to denote the space of $C([0,\infty);E)$.  For $\alpha\in(0,1)$ we  define $C^\alpha_TE$ as the space of $\alpha$-H\"{o}lder continuous functions from $[0,T]$ to $E$, endowed with the seminorm $\|f\|_{C^\alpha_TE}=\sup_{s,t\in[0,T],s\neq t}\frac{\|f(s)-f(t)\|_E}{|t-s|^\alpha}.$ We denote by $L^p_{T}E$ the set of  $L^p$-integrable  functions from $[0,T]$ to $E$.  For $\alpha\in (0,1)$, $ p\in [1,\infty)$, we  define $W^{\alpha,p}_TE$ as the Sobolev space of all $f\in L^p_TE$ such that $\int_0^T\int_0^T\frac{\|f(t)-f(s)\|_E^p}{|t-s|^{1+\alpha p}}\dif t\dif s<\infty$ endowed with the norm
  $\|f\|_{W^{\alpha,p}_TE}^p:=\int_0^T\|f(t)\|_E^p\dif t+\int_0^T\int_0^T\frac{\|f(t)-f(s)\|_E^p}{|t-s|^{1+\alpha p}}\dif t\dif s$.
  We also use $C^\alpha_{\textrm{loc}}([0,\infty);E)$  and $W^{\alpha,p}_{\textrm{loc}}([0,\infty);E)$, respectively, to denote the space of  functions $f$ satisfying for every $T>0$ $f|_{[0,T]}\in C^\alpha_TE$  and $f|_{[0,T]}\in W^{\alpha,p}_TE$, respectively.

 We use $L^p$ to denote the set of the standard $L^p$-integrable functions from $\mathbb{T}^3$ to $\mathbb{R}^3$.  Set $L^{2}_{\sigma}=\{u\in L^2, \div u=0\}$. For $s>0$, $p>1$ we set $W^{s,p}:=\{f\in L^p; \|(I-\Delta)^{\frac{s}{2}}f\|_{L^p}<\infty\}$ with the norm $\|f\|_{W^{s,p}}=\|(I-\Delta)^{\frac{s}{2}}f\|_{L^p}$. For $s>0$, $H^s:=W^{s,2}\cap L^2_\sigma$. For $s<0$ define $W^{s,q}$ to be the dual space of $W^{-s,p}$ with $\frac{1}{p}+\frac{1}{q}=1$. Let $\{e_{i}\}_{i\in\mathbb{N}}$ be a complete orthonormal system in $L^{2}_{\sigma}$. For a domain $D$ we use $\mathcal{D}'(D)$ to denote the dual space of $C_c^\infty(D)$.

\subsection{Noise}\label{s:not}

For a Hilbert space $U$ let $L_2(U,L^2_\sigma)$ be the space all Hilbert--Schmidt operators from $U$ to $L^2_\sigma$ with the norm $\|\cdot\|_{L_2(U,L^2_\sigma)}$. Let $G: L^2_\sigma\rightarrow L_2(U,L^2_\sigma)$ be $\mathcal{B}(L^2_\sigma)/\mathcal{B}(L_2(U,L^2_\sigma))$ measurable. In the sequel, we assume the following.

(G1) There exists $C>0$ such that for all $x\in L^2_\sigma$
$$\|G(x)\|_{L_2(U,L^2_\sigma)}\leq C(1+\|x\|_{L^2}).$$

(G2) If $x_n\rightarrow x$ weakly in $L^{2}_{\sigma}$ then
$$\lim_{n\rightarrow \infty}\|G(x_n)-G(x)\|_{L_2(U,L^{2}_{\sigma})}=0.$$

\begin{remark}
We note that if the noise is additive, i.e., $G\in L_2(U,L^{2}_{\sigma})$ does not depend on the solution, then the conditions {(G1)} and {(G2)} hold. Moreover, for the multiplicative  case, we have for instance the following example: for $x\in L^2_\sigma$, $u\in U$ and $\{l_k\}_{k\in\N}$ being the orthonormal basis in $U$, let
$$G(x)u=\sum_{k=1}^{\infty}\langle u,l_k\rangle_U \Pi_k x f_k,$$
where $\Pi_ky=\sum_{j=1}^k\langle y,e_k\rangle e_k$ and $f_k\in L^\infty(\T)$ satisfying $\sum_{k=1}^{\infty}|k|^{s}\|f_k\|^2_{L^\infty}<\infty$ for some $s>0$. Indeed, it then holds
$$\|G(x_n)-G(x)\|_{L_2(U,L^{2}_{\sigma})}^2=\sum_{k=1}^{\infty}\| \Pi_k(x_n-x )f_k\|_{L^{2}_{\sigma}}^2\leq \|x_n-x\|_{H^{-s/2}}^2\sum_{k=1}^{\infty}|k|^s\|f_k\|^2_{L^\infty},$$
where  $\|x_n-x\|_{H^{-s/2}}\rightarrow0$ by compact embedding. One could also consider  nonlinear operators for instance by replacing $\Pi_{k}x$ by $g(\Pi_{\ell}x)$ for some fixed $\ell$ and  a Nemytskii operator  $g$  given by a Lipchitz function.
\end{remark}

\begin{remark}
At first sight, it may seem unsatisfactory that the simple linear multiplicative noise of the form
$G(u)\dif W=u\dif \beta$ with  a real-valued Brownian motion  $\beta$ is not covered by our theory. Indeed, it does not satisfy the condition (G2). However, let us point out that for this kind of noise one can perform a suitable transformation and a random rescaling of time in the spirit of \cite{CFF19} in order to rewrite the stochastic Euler system as a deterministic one. In other words, the results of the deterministic theory can be translated into this stochastic setting and actually much more can be proved in this case.
\end{remark}

For the weak--strong uniqueness principle we assume the following Lipschitz condition.

(Glip)  There exists a constant $L$ such that $$\|G(x)-G(y)\|^2_{L_2(U,L^2_\sigma)}\leq L\|x-y\|^2_{L^2}.$$

And additional assumptions are required for the convex integration method and the resulting non-uniqueness in law. Suppose there is another  Hilbert space $U_1$ such that the embedding  $U\subset  U_1$ is Hilbert--Schmidt. In particular, we  assume that the noise is additive and the following holds.

(G3) $G\in L_{2}(U,H^{(3+\sigma)/2})$ for some $\sigma>0$.

(G4) $G:U_1\rightarrow H^{5/2+\sigma}$ is a bounded operator for some $\sigma>0$.

\subsection{Path spaces}\label{s:path}

In Section~\ref{s:p1}, we introduce four notions of solutions: martingale and probabilistically weak solutions as well as simplified martingale and simplified probabilistically weak solutions. Thus, as a first step, we define four corresponding path spaces, which we denote by $\Omega_{M}$, $\Omega_{PW}$, $\Omega_{SM}$ and $\Omega_{SPW}$, respectively.

\subsubsection{Martingale solutions}

 Let $\alpha\in (2/3,1)$, $k\in(3/2,\infty)$, $q\in (1,\infty)$ be such that $\alpha q>2$ and $3\alpha/2-2/q>1$. Define
$$
\Omega_{SM}:=\left\{(x,y)\in C([0,\infty),H^{-3}\times (\mathcal{M}^+(\mathbb{T}^3,\R^{3\times 3}_{\rm{sym}}),w)); y\in W^{\alpha,q}_{\mathrm{loc}}([0,\infty);W^{-k,2}(\mathbb{T}^3,\R^{3\times 3}_{\rm{sym}}))\right\}.
$$
Here $\mathcal{M}^{+}(\mathbb{T}^3,\R^{3\times 3}_{\rm{sym}})$ denotes the space of (symmetric) positive semidefinite matrix valued finite measures $m$ so that for every $\xi\in \mathbb{R}^{3}$
$$
m:\xi\otimes\xi\ \mbox{is a  non-negative finite measure on}\  \mathbb{T}^{3},
$$
where $A:B=\sum_{i,j=1}^{3}a_{ij}b_{ij}$ denotes the matrix inner product of  symmetric  matrices $A=(a_{ij})_{i,j=1}^{3}$, $B=(b_{ij})_{i,j=1}^{3}$.
By $(\mathcal{M}^{+}(\mathbb{T}^3,\R^{3\times 3}_{\rm{sym}}),w)$ we denote this space equipped with the weak topology, which is completely separably metrizable by the Prokhorov metric, see \cite[Lemma 4.3, Lemma 4.5]{K17}. As a consequence, the path space $\Omega_{SM}$ is a Polish space. The parameters $\alpha,q$ are chosen in  a way to permit the definition of a certain Young integral, see the discussion after \eqref{eq:SSS} below for more details. The parameter $k$ is chosen so that the embedding $\mathcal{M}^{+}\subset W^{-k,2}$ holds in three space dimensions.

 Let $\mathscr{P}(\Omega_{SM})$ denote the set of all probability measures on $(\Omega_{SM},\mathcal{B}_{SM})$ with $\mathcal{B}_{SM}$ being the Borel $\sigma$-algebra coming from the topology of locally uniform convergence on $\Omega_{SM}$. Let  $(x,y):\Omega_{SM}\rightarrow H^{-3}\times \mathcal{M}^+(\mathbb{T}^3,\R^{3\times 3}_{\rm{sym}})$ denote the canonical process on $\Omega_{SM}$ given by
$$(x_t(\omega),y_t(\omega))=\omega(t).$$
Similarly, for $t\geq 0$ we define  Borel $\sigma$-algebra $\mathcal{B}_{SM}^{t}=\sigma\{ (x(s),y(s)),s\geq t\}$.
Finally,  we define the canonical filtration  $\mathcal{B}_{SM,t}^0:=\sigma\{ (x(s),y(s)),s\leq t\}$, $t\geq0$, as well as its right continuous version $\mathcal{B}_{SM,t}:=\cap_{s>t}\mathcal{B}^0_{{SM},s}$, $t\geq 0$.

Set $\mathbb{X}:=\{(x_0,y_0,z_0)\in L^2_\sigma\times \mathcal{M}^+(\mT^3,\mR^{3\times 3}_{\rm{sym}})\times \mR;\,\|x_0\|^2_{L^2}\leq z_0\}$.
\begin{align*}
\Omega_{M}:=\big\{(x,y,z)|&(x,y,z)\in C([0,\infty),H^{-3}\times (\mathcal{M}^+(\mathbb{T}^3,\R^{3\times 3}_{\rm{sym}}),w)\times \R);
 \\&y\in W^{\alpha,p}_{\mathrm{loc}}([0,\infty);W^{-k,2}(\mathbb{T}^3,\R^{3\times 3}_{\rm{sym}}))\big\},
 \end{align*}
and
\begin{align*}\Omega_{M}^t:=\big\{(x,y,z)|&(x,y,z)\in C([t,\infty),H^{-3}\times (\mathcal{M}^+(\mathbb{T}^3,\R^{3\times 3}_{\rm{sym}}),w)\times \R);
 \\&y\in W^{\alpha,p}_{\mathrm{loc}}([t,\infty);W^{-k,2}(\mathbb{T}^3,\R^{3\times 3}_{\rm{sym}}))\big\}.
 \end{align*}
  Let $\mathscr{P}({\Omega_M})$ denote the set of all probability measures on $({\Omega_M},{\mathcal{B}_M})$ with ${\mathcal{B}_M}$ being the Borel $\sigma$-algebra coming from the topology of locally uniform convergence on ${\Omega}_M$. Let  $(x,y,z):{\Omega}_M\rightarrow H^{-3}\times \mathcal{M}^+(\mathbb{T}^3,\R^{3\times 3}_{\rm{sym}})\times \R$ denote the canonical process on $\Omega_{M}$ given by
$$(x_t(\omega),y_t(\omega),z_t(\omega))=\omega(t).$$
Similarly, for $t\geq 0$ we define  Borel $\sigma$-algebra $\mathcal{B}_M^{t}=\sigma\{ (x(s),y(s),z(s)),s\geq t\}$.
Finally,  we define the canonical filtration  ${\mathcal{B}}_{M,t}^0:=\sigma\{ (x(s),y(s),z(s)),s\leq t\}$, $t\geq0$, and its right continuous version ${\mathcal{B}}_{M,t}:=\cap_{s>t}\mathcal{B}^0_{M,s}$, $t\geq 0$.
 For given probability measure $P$ we use $\E^P$ to denote the expectation under $P$.

\subsubsection{Probabilistically weak solutions}

For the same parameters $\alpha,q,k$ as before define
\begin{align}\label{eq:SSS}
\begin{aligned}
\Omega_{SPW}&:=\big\{(x,y,b)\in  C([0,\infty); H^{-3}\times (\mathcal{M}^+(\mathbb{T}^3,\R^{3\times 3}_{\rm{sym}}),w)\times U_1);
\\& \quad y\in W^{\alpha,q}_{\mathrm{loc}}([0,\infty);W^{-k,2}(\mathbb{T}^3,\R^{3\times 3}_{\rm{sym}})),b\in W^{\alpha/2,q}_{\mathrm{loc}}([0,\infty);U_1)\big\}.
\end{aligned}
\end{align}
Similarly to the above, the path space $\Omega_{SPW}$ is a Polish space. The parameters $\alpha,q$ are chosen in  a way to permit the definition of a certain Young integral, which will be needed below. Recall that for $g\in C^{\beta}$ and $f\in C^{\gamma}$ the Young integral $t\mapsto \int_{0}^{t}g_{r}\dif f_{r}$ is well-defined provided $\beta+\gamma>1$, cf. Lemma~\ref{lem:young}.  In view of the continuous embedding $W^{\alpha,q}\subset C^{\beta}$ valid in one dimension for $\beta\in (0,\alpha- 1/q)$,  the above Young integral is well-defined provided  $g\in W^{\alpha,q}$ and $f\in W^{\alpha/2,q}$ such that $3\alpha/2-2/q >1$.

Let $\mathscr{P}(\Omega_{SPW})$ denote the set of all probability measures on $(\Omega_{SPW},{\mathcal{B}}_{SPW})$ with ${\mathcal{B}_{SPW}}$   being the Borel $\sigma$-algebra coming from the topology of locally uniform convergence on $\Omega_{SPW}$. Let  $(x,y,b):\Omega_{SPW}\rightarrow H^{-3}\times \mathcal{M}^+(\mathbb{T}^3,\R^{3\times 3}_{\rm{sym}})\times U_{1}$ denote the canonical process on $\Omega_{SPW}$ given by
$$(x_t(\omega),y_t(\omega),b_t(\omega))=\omega(t).$$
For $t\geq 0$ we define   $\sigma$-algebra ${\mathcal{B}}_{SPW}^{t}=\sigma\{ \omega(s),s\geq t\}$.
Finally,  we define the canonical filtration  ${\mathcal{B}}_{SPW,t}^0:=\sigma\{ \omega(s),s\leq t\}$, $t\geq0$, and its right continuous version ${\mathcal{B}}_{SPW,t}:=\cap_{s>t}{\mathcal{B}}^0_{SPW,s}$, $t\geq 0$.

Similarly, we define
\begin{align*}
\Omega_{PW}&:=\big\{(x,y,z,b)\in  C([0,\infty); H^{-3}\times (\mathcal{M}^+(\mathbb{T}^3,\R^{3\times 3}_{\rm{sym}}),w)\times\R\times U_1);
\\&\quad y\in W^{\alpha,q}_{\mathrm{loc}}([0,\infty);W^{-k,2}(\mathbb{T}^3,\R^{3\times 3}_{\rm{sym}})),b\in W^{\alpha/2,q}_{\mathrm{loc}}([0,\infty);U_1)\big\},
\end{align*}
and the canonical process on $\Omega_{PW}$ and $(\mathcal{B}_{PW,t}^0)_{t\geq0},(\mathcal{B}_{PW,t})_{t\geq0}.$

\section{Class of dissipative solutions}

\subsection{Preliminary discussion}\label{s:p1}

It turns out that a good stable notion of solution to the Euler system \eqref{el} has to include more information than what is provided by the velocity field itself. Let us first discuss the main ideas on an informal level which also permits us to fix the notation.

In the definitions below we denote by $x$ the velocity field and rewrite the Euler system \eqref{el} as
\begin{equation}\label{eq:1a}
\dif x+\div\mathfrak{R}\,\dif t+\nabla p\,\dif t=G(x)\,\dif b,\qquad\div x =0,\qquad x(0)=x_{0},
\end{equation}
satisfied in a distributional sense. Here $b$ is a cylindrical Wiener process in $U$. This way we introduce a matrix-valued variable $\mathfrak{R}$, which is considered as  part of the solution. Furthermore, we require a compatibility condition, namely, that the so-called Reynolds stress  satisfies
\begin{equation}\label{eq:1b}
\mathfrak{N}:=\mathfrak{R}-x\otimes x\geq 0.
\end{equation}
Observe that if $\mathfrak{R}=x\otimes x$, the definition reduces to the usual notion of analytically weak solution. However, due to the lack of compactness for the Euler equations, weak solutions are not stable under approximations and this is the reason for weakening the notion of solution further by introducing $\mathfrak{R}$. Since $\mathfrak{R}$ is only $L^{\infty}$ with respect to the time variable, we  also work with its primitive function given by
\begin{equation}\label{eq:1c}
\partial_{t} y = \mathfrak{R},\quad y(0)=y_{0}.
\end{equation}

We aim for a class of solutions satisfying a weak--strong uniqueness principle, thus, we  include an energy inequality into our definition. In the deterministic setting, this corresponds to the notion of dissipative  measure--valued solution extensively studied in the  literature. However, we choose a different formulation which overcomes various difficulties present in the stochastic setting. In particular, we introduce a new variable $z$ satisfying
\begin{equation}\label{eq:1d}
\dif z=2\langle x,G(x)\dif b\rangle+ \|G(x)\|_{L_{2}(U,L^{2})}^{2}\dif t,\qquad z(0)=z_{0}.
\end{equation}
Note that if $x$ is an analytically weak solution possessing sufficient spatial regularity then the energy equality holds, that is, $z(t)=\|x(t)\|_{L^{2}}^{2}$ for all $t\geq 0$. This requirement  has to be relaxed and so  we  postulate instead the compatibility condition
\begin{equation}\label{eq:1e}
\int_{\T}\dif\tr\mathfrak{R}(t)=\|x(t)\|_{L^{2}}^{2}+\int_{\T}\dif\tr\mathfrak{N}(t)\leq z(t)\quad\mbox{for a.e.}\ t\geq 0,
\end{equation}
which only permits to show  that $z(t)\geq \|x(t)\|_{L^{2}}^{2}$ for all $t\geq 0$.

For the weak--strong uniqueness principle we additionally require $z_{0}=\|x_{0}\|_{L^{2}}^{2}$. However, a general initial value $z_{0}$ has to be allowed  for the Markov selection in order to obtain a notion of solution stable under shifts on trajectories.

Roughly speaking, a \emph{dissipative martingale solution} defined below is the probability law of $(x,y,z)$ satisfying \eqref{eq:1a}, \eqref{eq:1b}, \eqref{eq:1c}, \eqref{eq:1d}, \eqref{eq:1e} in a suitable sense. A \emph{dissipative probabilistically weak solution} is then the probability law of $(x,y,z,b)$ such that \eqref{eq:1a}, \eqref{eq:1b}, \eqref{eq:1c}, \eqref{eq:1d}, \eqref{eq:1e} hold.
In the case of $z_{0}=\|x_{0}\|_{L^{2}}^{2}$, it follows from \eqref{eq:1d} that $z$ is a function of the other variables. Therefore, we define a \emph{simplified dissipative martingale solution} as the law of $(x,y)$ under a dissipative martingale solution and a \emph{simplified dissipative probabilistically weak solution} as the law of  $(x,y,b)$ under a dissipative probabilistically weak solution.

\subsection{Dissipative martingale solutions}

Throughout the paper, the meaning of the variables $x, b, \mathfrak{R}, \mathfrak{N}, y, z$ remains the same as in Section~\ref{s:p1}.

\bd\label{martingale solution1}
Let  $(x_{0},y_0,z_0)\in \mX$. A probability measure $P\in \mathscr{P}(\Omega_{M})$ is  a  dissipative martingale solution to the Euler system \eqref{el}  with the initial value $(x_0,y_0,z_0) $ at time $s$ provided

\emph{(M1)} $P(x(t)=x_0, y(t)=y_0, z(t)=z_0,0\leq t\leq s)=1$, $P(\mathfrak{N}\in L^{\infty}_{\rm{loc}}([s,\infty);\mathcal{M}^{+}(\T;\mathbb{R}^{3\times 3}_{\rm{sym}})))=1$.

\emph{(M2)} For every $e_i\in C^\infty(\mathbb{T}^3)\cap L^2_\sigma$ and  $t\geq 0$ the process
$$M_{t,s}^{i}:=\langle x(t)-x(s),e_i\rangle-\int^t_s\int_{\mathbb{T}^3}\nabla e_i:\dif\mathfrak{R}(r) \dif r$$
is a continuous square integrable $({\mathcal{B}}_{M,t}^0)_{t\geq s}$-martingale under $P$ with the quadratic variation process
given by
$\int_s^t\|G(x(r))^*e_i\|_{U}^2\dif r$.

\emph{(M3)} P-a.s. for every $t\geq s$
\begin{align*}
z(t)=z(s)+2M^E_{t,s}+\int_s^t\|G(x(r))\|_{L_2(U,L^2)}^2\dif r,
\end{align*}
where $M^E_{t,s}=\sum_{i=1}^\infty\int_s^t\langle x(r), e_i\>\dif M_{r,s}^i$ is a continuous $({\mathcal{B}}_{M,t}^0)_{t\geq s}$-martingale and
\begin{align*}
P\left(\int_{\T}\dif\tr\mathfrak{R}(t)\leq z(t) \ \textrm{\emph{for a.e.}}\  t\geq s\right)=1.
\end{align*}
\ed

 The following result shows how to derive a priori estimates for dissipative solutions.

\begin{lemma}\label{lem:def} Suppose that \emph{(G1)} holds.
Let $P$ be a dissipative martingale solution to \eqref{el}. Then
P-a.s. for every $t\geq s$ and every $p\in[2,\infty)$
\begin{align}\label{eq:ito}
\begin{aligned}
z^{p}(t)&=z^{p}(s)+2p\int_s^tz^{p-1}(r)\dif M^{E}_{r}+p\int_s^tz^{p-1}(r)\|G(x(r))\|_{L_2(U,L^2)}^2\dif r\\
&\qquad +2p(p-1)\int_s^tz^{p-2}(r)\|G(x(r))^*x(r)\|_{U}^2\dif r,
\end{aligned}
\end{align}
and
\begin{equation}\label{eq:z1}
P(\|x(t)\|_{L^{2}}^{2}\leq z(t)\ \mbox{\emph{for all}}\  t\geq s)=1.
\end{equation}
Consequently, for every $N\in\N$ and $p\in[2,\infty)$ there exists a universal constant $C_{N,p}>0$ such that
$$\E^P\left[\sup_{t\in[0,N]}\|x(t)\|_{L^2}^{2p}\right]+\E^P\left[\esssup_{t\in[0,N]}\left(\int_{\mathbb{T}^3} \dif\mathrm{tr}\mathfrak{R}(t)\right)^p\right]\leq C_{N,p}(z^{p}(s)+1).$$
\end{lemma}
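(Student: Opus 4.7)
\emph{Proof plan.} The statement packages three claims that I would address in sequence: the It\^o formula \eqref{eq:ito}, the pointwise comparison \eqref{eq:z1}, and the resulting moment bound.

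\emph{Step 1 (It\^o formula).} First I would check that $z$ is non-negative. By (M1) the Reynolds defect $\mathfrak N=\mathfrak R-x\otimes x$ is positive semidefinite, so $\int_{\T}\dif\tr\mathfrak R=\|x\|_{L^2}^2+\int_{\T}\dif\tr\mathfrak N\ge 0$; combined with (M3) this forces $z(t)\ge 0$ for a.e.\ $t\ge s$, and the continuity of $z$ promotes this to every $t$. Since for $p\ge 2$ the function $\zeta\mapsto\zeta^p$ belongs to $C^2([0,\infty))$, It\^o's formula applies to the SDE for $z$ in (M3), provided the quadratic variation of $M^E$ is identified. Polarizing the formula for $\langle M^{i}_{\cdot,s}\rangle$ in (M2) yields
$$\dif\langle M^{i}_{\cdot,s},M^{j}_{\cdot,s}\rangle_t=\langle G(x)^*e_i,G(x)^*e_j\rangle_U\dif t,$$
and a Parseval computation in the orthonormal basis $\{e_i\}$ of $L^2_\sigma$ collapses the double sum in $\dif\langle M^E\rangle$ to $\|G(x)^*x\|_U^2\dif t$. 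Substituting this into It\^o's formula produces \eqref{eq:ito}.

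\emph{Step 2 (pointwise comparison).} The a.e.\ inequality in (M3) must be upgraded to every $t$, which is nontrivial because the canonical path topology only records $x$ as a continuous $H^{-3}$-valued curve and $\|x(\cdot)\|_{L^2}^2$ need not be continuous. From (M1)--(M2) and the $L^\infty_t$-regularity of $\mathfrak R$, for each $e_i\in C^\infty(\T)\cap L^2_\sigma$ the map $t\mapsto\langle x(t),e_i\rangle$ is continuous (sum of a Lipschitz drift and a continuous martingale). Fix $t_0\ge s$ and pick $t_n\to t_0$ from the (dense) full-measure set on which $\|x(t_n)\|_{L^2}^2\le z(t_n)$; continuity of $z$ makes $\{x(t_n)\}$ bounded in $L^2$. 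Along a subsequence $x(t_n)\rightharpoonup x^\ast$ weakly in $L^2_\sigma$, and testing against each $e_i$ forces $x^\ast=x(t_0)$. Weak lower semicontinuity of the $L^2$-norm and continuity of $z$ then deliver $\|x(t_0)\|_{L^2}^2\le\liminf_n\|x(t_n)\|_{L^2}^2\le\liminf_n z(t_n)=z(t_0)$. This step is the principal obstacle: without the time-continuous majorant $z$ in the definition of solution, this pathwise upgrade would not be available.

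\emph{Step 3 (moment bound).} By Step 2 and continuity of $z$, both $\sup_{t\in[0,N]}\|x(t)\|_{L^2}^{2p}$ and $\esssup_{t\in[0,N]}\bigl(\int_{\T}\dif\tr\mathfrak R(t)\bigr)^p$ are dominated by $\sup_{t\in[0,N]}z^p(t)$, so it suffices to estimate the latter. Assumption (G1) gives $\|G(x)\|_{L_2(U,L^2)}^2\le C(1+z)$ and $\|G(x)^*x\|_U^2\le\|G(x)\|_{L_2(U,L^2)}^2\|x\|_{L^2}^2\le C(1+z)z$, so the Lebesgue integrals on the right of \eqref{eq:ito} are controlled by $C_p\int_s^t(1+z^p)\dif r$. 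For the stochastic term I would localize by hitting times of $z$ to ensure a true martingale, apply the Burkholder--Davis--Gundy inequality, and bound
$$\Bigl(\int_s^Nz^{2(p-1)}\|G(x)^*x\|_U^2\dif r\Bigr)^{1/2}\le\sup_{r\in[s,N]}z^{p/2}(r)\Bigl(\int_s^N(z^{p-1}+z^p)\dif r\Bigr)^{1/2},$$
using Young's inequality to absorb a fraction of $\E^P[\sup_{[0,N]}z^p]$ on the left. Removing the localization and applying Gronwall's lemma yields $\E^P[\sup_{t\in[0,N]}z^p(t)]\le C_{N,p}(z^p(s)+1)$, as required.
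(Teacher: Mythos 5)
Your proposal is correct and follows essentially the same route as the paper: non-negativity of $z$ plus It\^o's formula for \eqref{eq:ito}, lower semicontinuity of $\|x(\cdot)\|_{L^2}^2$ against continuity of $z$ for \eqref{eq:z1}, and a localization--Burkholder--Davis--Gundy--Young--Gronwall argument for the moment bound. The only differences are expository: you spell out the identification of $\langle M^E\rangle$ and the weak-compactness argument behind the lower semicontinuity, which the paper states more tersely.
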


\begin{proof}
First, we observe that according to (M1), (M3) it holds $P$-a.s. for a.e. $t\geq s$ that $z(t)\geq 0$ and this can be extended to every $t\geq s$ by continuity of $t\mapsto z(t)$. Then \eqref{eq:ito} is an application of It\^o's formula.

Next, we realize that by (M1), (M3) it holds $P$-a.s. for a.e. $t\geq s$
$$
\int_{\T}\dif\tr\mathfrak{R}(t)\leq z(t),\qquad \|x(t)\|_{L^{2}}^{2}=\int_{\T}\dif\tr\mathfrak{R}(t)-\int_{\T}\dif\tr\mathfrak{N}(t)\leq  z(t).
$$
Since the function $t\mapsto \|x(t)\|_{L^{2}}^{2}$ is lower semicontinuous due to the continuity of $x$ in $H^{-3}$ and $t\mapsto z(t)$ is continuous, we deduce \eqref{eq:z1}.

To prove the last  claim, we first choose stopping time $\tau_R:=\inf\{t>0, z(t)\geq R\}$ and by (M3) we know
$\tau_R\to\infty$ as $R\to\infty$. In fact by (M3) and using the Burkholder--Davis--Gundy inequality, Young's inequality, the linear growth assumption (G1) on $G$ and Gronwall's lemma we obtain
$$
\E^{P}\left[\sup_{t\in[0,N]}z(t)\right]\lesssim z(s)+1.
$$
Then
we estimate the right hand side of \eqref{eq:ito} using the Burkholder--Davis--Gundy inequality, Young's inequality, the linear growth assumption (G1) on $G$ and Gronwall's lemma to deduce
$$
\E^{P}\left[\sup_{t\in[0,N\wedge \tau_R]}z^{p}(t)\right]\lesssim z^{p}(s)+1,
$$
where the implicit constant only depends on $N,p$ and the constant in (G1). Letting $R\to \infty$, the result follows.
\end{proof}

We note that a dissipative martingale solution in the sense of Definition~\ref{martingale solution1} may contain an initial jump of the energy in the sense that $z_{0}>\|x_{0}\|_{L^{2}}^{2}$. However, we are even able to construct solutions without the initial energy jump, and this will be seen in the construction by compactness in Section~\ref{s:ex} as well as in the construction by convex integration in Section~\ref{s:ci}. In addition, the weak--strong uniqueness principle requires the assumption $z_{0}=\|x_{0}\|_{L^{2}}^{2}$. The reason for relaxing this in our main definition of a solution is the Markov selection in Section~\ref{sec:mar}. More precisely, a notion of solution without an initial energy jump is not stable under shifts on trajectories, which is one of the main ingredients required by  the Markov selection.

We observe that the notion of dissipative solution simplifies in case of no initial energy jump.

\bd\label{smartingale solution1}
Let  $(x_{0},y_0)\in L^2_\sigma\times \mathcal{M}^{+}(\T;\mathbb{R}^{3\times 3}_{\rm{sym}})$. A probability measure $P\in \mathscr{P}(\Omega_{SM})$ is  a simplified dissipative martingale solution to the Euler system \eqref{el}  with the initial value $(x_0,y_0) $ at time $s$ provided

\emph{(M1)} $P(x(t)=x_0, y(t)=y_0, 0\leq t\leq s)=1$, $P(\mathfrak{N}\in L^{\infty}_{\rm{loc}}([s,\infty);\mathcal{M}^{+}(\T;\mathbb{R}^{3\times 3}_{\rm{sym}})))=1$.

\emph{(M2)} For every $e_i\in C^\infty(\mathbb{T}^3)\cap L^2_\sigma$ and  $t\geq 0$ the process
$$M_{t,s}^{i}:=\langle x(t)-x(s),e_i\rangle-\int^t_s\int_{\mathbb{T}^3}\nabla e_i:\dif\mathfrak{R}(r) \dif r$$
is a continuous square integrable $(\mathcal{B}_{SM,t}^0)_{t\geq s}$-martingale under $P$ with the quadratic variation process
given by
$\int_s^t\|G(x(r))^*e_i\|_{U}^2\dif r$.

\emph{(M3)} P-a.s. define for every $t\geq s$
\begin{align*}
z(t):=\|x_0\|_{L^2}^2+2M^E_{t,s}+\int_s^t\|G(x(r))\|_{L_2(U,L^2)}^2\dif r,
\end{align*}
where $M^E_{t,s}=\sum_{i=1}^\infty\int_s^t\langle x(r), e_i\>\dif M_{r,s}^i$ is a continuous $(\mathcal{B}_{SM,t}^0)_{t\geq s}$-martingale and
\begin{align*}
P\left(\int_{\T}\dif\tr\mathfrak{R}(t)\leq z(t) \ \textrm{\emph{for a.e.}}\  t\geq s\right)=1.
\end{align*}
\ed

For this definition we do not need $z$ in the path space and we can prove that if $z_{0}=\|x_{0}\|_{L^{2}}^{2}$ then these two definitions are equivalent.
\begin{corollary}\label{cor:1}
Let $P$ be a dissipative martingale solution with the initial value $(x_{0},y_{0},z_{0})$ at time $s$ such that $z_{0}=\|x_{0}\|_{L^{2}}^{2}$. Then the canonical process $z$ under $P$ is a function of $x,y$. In other words, $P$ is fully determined by the joint probability law of $x,y$ and can be identified with a probability measure on the reduced path space $\Omega_{SM}$. Hence $P$ is a simplified dissipative martingale solution with the initial value $(x_{0},y_{0})$ at time $s$.

Conversely, let $(x_{0},y_{0})\in L^2_\sigma\times \mathcal{M}^{+}(\T;\mathbb{R}^{3\times 3}_{\rm{sym}})$ be given and let $P\in \mathscr{P}(\Omega_{SM})$ be a simplified dissipative martingale solution and define $P$-a.s. for $t\geq s$
$$
z(t):=\|x_0\|_{L^2}^2+2M^E_{t,s}+\int_s^t\|G(x(r))\|_{L_2(U,L^2)}^2\dif r.
$$
Let $Q$ be the law of $(x,y,z)$ under $P$. Then $Q\in \mathscr{P}({\Omega}_M)$
gives raise to a dissipative martingale solution starting from the initial value $(x_{0},y_{0},\|x_0\|_{L^2}^2)$ at the time $s$.
\end{corollary}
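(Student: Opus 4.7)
The corollary has two directions, both of which hinge on showing that when there is no initial energy jump, i.e.\ $z_0=\|x_0\|_{L^2}^2$, the auxiliary variable $z$ is entirely captured by the pair $(x,y)$ in a pathwise measurable way, so that Definition~\ref{martingale solution1} and Definition~\ref{smartingale solution1} encode exactly the same information.

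For the forward direction, let $P\in\mathscr{P}(\Omega_M)$ be a dissipative martingale solution with $z_0=\|x_0\|_{L^2}^2$. By (M3) of Definition~\ref{martingale solution1},
$$z(t)=\|x_0\|_{L^2}^2+2M^E_{t,s}+\int_s^t\|G(x(r))\|_{L_2(U,L^2)}^2\,\dif r,\qquad t\ge s,\ \ P\text{-a.s.}$$
The martingale $M^i_{\cdot,s}$ is given by the explicit pathwise formula in (M2), depending only on $x$ (through $\langle x,e_i\rangle$) and on $y$ (through $\mathfrak{R}=\partial_t y$ in the distributional sense), hence it is adapted to the filtration generated by $(x,y)$, and so is its quadratic variation. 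By uniqueness of the It\^o integral with respect to a continuous square integrable martingale driver, $M^E_{\cdot,s}=\sum_i \int_s^{\cdot} \langle x(r),e_i\rangle\,\dif M^i_{r,s}$ is $\sigma(x,y)$-measurable up to $P$-null sets. Therefore $z$ is a measurable function of $(x,y)$, and $P$ is fully determined by the pushforward $\tilde P:=P\circ(x,y)^{-1}\in\mathscr{P}(\Omega_{SM})$. Conditions (M1) and (M2) of Definition~\ref{smartingale solution1} transfer to $\tilde P$ verbatim since they do not involve $z$; condition (M3) of Definition~\ref{smartingale solution1} is precisely the displayed identity combined with the energy inequality transported from (M3) of Definition~\ref{martingale solution1}.

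For the converse direction, given a simplified dissipative martingale solution $P\in\mathscr{P}(\Omega_{SM})$, define the process $z$ on $(\Omega_{SM},P)$ through the formula in (M3) of Definition~\ref{smartingale solution1} and let $Q\in\mathscr{P}(\Omega_M)$ be the joint law of $(x,y,z)$. We verify (M1)--(M3) of Definition~\ref{martingale solution1} for $Q$ with initial triple $(x_0,y_0,\|x_0\|_{L^2}^2)$. Item (M1) holds because both $M^E_{\cdot,s}$ and the Lebesgue integral vanish on $[0,s]$, so $z(t)=\|x_0\|_{L^2}^2$ there, whereas the requirements on $x$, $y$ and $\mathfrak{N}$ are inherited from $P$. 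For (M2), $M^i_{\cdot,s}$ is a $(\mathcal{B}_{SM,t}^0)$-martingale under $P$; since $z$ is by construction adapted to the filtration generated by $(x,y)$, the filtration $(\mathcal{B}_{M,t}^0)$ coincides, after the identification $(x,y)\mapsto(x,y,z)$, with $(\mathcal{B}_{SM,t}^0)$, so the martingale property and the quadratic variation persist under $Q$. Finally (M3) of Definition~\ref{martingale solution1} reduces to the definition of $z$ together with the energy inequality carried over from Definition~\ref{smartingale solution1}.

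The main technical delicacy I anticipate is the measurability assertion ``$z$ is a function of $(x,y)$'', since stochastic integrals are a priori defined only up to $P$-indistinguishability. This is resolved by exploiting the explicit pathwise definition of $M^i_{\cdot,s}$ in (M2) and the standard uniqueness of the It\^o integral with respect to a continuous semimartingale driver, which canonically identifies $M^E_{\cdot,s}$ (and hence $z$) as a $\sigma(x,y)$-measurable process; this is all that is required to disintegrate $P$ onto $\Omega_{SM}$ and to identify the two solution notions.
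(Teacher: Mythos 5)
Your proposal is correct and follows essentially the same route as the paper: the paper's proof simply observes that by (M2) the martingale $M$ is a pathwise function of $(x,y)$, hence by (M3) the process $z$ is determined by $(x,y)$ and $z_0$, and notes that the converse is immediate. Your elaboration of the measurability of the stochastic integral and the verification of (M1)--(M3) for $Q$ fills in exactly the details the paper leaves implicit.
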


\begin{proof}
It follows from (M2) that $M$ is a function of $x,y$ and consequently from (M3) we deduce that $z$ is determined by $x,y$ and the initial value $z_{0}$. This gives the first claim whereas the second claim is immediate.
\end{proof}

We also observe that if $P$ is a dissipative martingale solution to \eqref{el} with initial value $(x_{0},y_{0},z_{0})$ then the law of the process $(x,y+c_{1},z+c_{2})$ under $P$ is again a dissipative martingale solution to \eqref{el} with initial value $(x_{0},y_{0}+c_{1},z_{0}+c_{2})$ for every $c_{1}\in\R$, $c_{2}\geq 0$. Indeed, the initial value $y_{0}$ does not have any influence on the actual dynamics: it was introduced artificially by including $y$ into the path space rather than $\mathfrak{R}=\partial_{t}y$, which is not continuous in time. The role of the initial value $z_{0}$ is more delicate. It is related to the so-called energy sinks discussed more in detail in Section~\ref{sec:mar}.

In order to further verify that our definition of dissipative solution is reasonable, we first prove that a sufficiently regular dissipative solution is a solution in the classical sense.

\begin{proposition}\label{classical}  Suppose that \emph{(G1)} holds. Let $\mathfrak{t}$ be a $({\mathcal{B}}_{SM,t}^0)_{t\geq s}$-stopping time.
  If $P$ is a simplified dissipative martingale solution to \eqref{el} with the initial value $(x_{0},y_{0})$ at time $s$ such that
  $$
  x(\cdot\wedge\mathfrak{t}) \in C([s,\infty);C^{1}(\T))\hspace{1em} P\mbox{-a.s.},
  $$
then
  $$P(\mathfrak{R}=x\otimes x \ \mbox{\emph{for a.e.}}\  \mathfrak{t} \geq t\geq s)=1.$$
In other words, under $P$ the canonical process $x$ satisfies \eqref{el} before $\mathfrak{t}$ in the analytically strong sense.
\end{proposition}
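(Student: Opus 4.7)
The plan is to derive an energy identity for $\|x(t)\|_{L^{2}}^{2}$ on $[s,\mathfrak{t}]$ from the weak formulation (M2) and compare it against the postulated expression for $z(t)$ in (M3); the gap between the two forces a Gronwall estimate on $\int_{\T}\dif\tr\mathfrak{N}(t)$ which collapses to zero.

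\textbf{Step 1 (energy identity).} For each fixed $i$, apply It\^o's formula to $\langle x(t),e_{i}\rangle^{2}$ using the semimartingale decomposition provided by (M2): the drift reads $2\langle x,e_{i}\rangle\int_{\T}\nabla e_{i}:\dif\mathfrak{R}$, the martingale part is $2\langle x,e_{i}\rangle\,\dif M^{i}_{t,s}$, and the quadratic covariation contribution is $\|G(x)^{*}e_{i}\|_{U}^{2}\dif t$. Summing over $i$ yields, $P$-a.s. on $[s,\mathfrak{t}]$,
\begin{equation*}
\|x(t)\|_{L^{2}}^{2}=\|x_{0}\|_{L^{2}}^{2}+2\int_{s}^{t}\!\!\int_{\T}\nabla x(r):\dif\mathfrak{R}(r)\,\dif r+2M^{E}_{t,s}+\int_{s}^{t}\|G(x(r))\|_{L_{2}(U,L^{2})}^{2}\dif r,
\end{equation*}
where the drift term is obtained by recognising $\sum_{i}\langle x,e_{i}\rangle\nabla e_{i}=\nabla x$. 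The exchange of summation and the measure-valued pairing needs justification, since $\mathfrak{R}(r)$ is only a finite measure; this can be handled by a spatial mollification $x^{\varepsilon}=x*\rho_{\varepsilon}$, for which the identity is classical, followed by $\varepsilon\to 0$ exploiting $\nabla x\in C^{0}$ and $\mathfrak{R}\in L^{\infty}_{\mathrm{loc}}([s,\infty);\mathcal{M}^{+})$ on $[s,\mathfrak{t}]$.

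\textbf{Step 2 (cancellation and comparison with $z$).} Decompose $\mathfrak{R}=x\otimes x+\mathfrak{N}$. Since $x\in C^{1}$ and $\div x=0$,
\begin{equation*}
\int_{\T}\nabla x:(x\otimes x)\,\dif y=\tfrac{1}{2}\int_{\T}x\cdot\nabla |x|^{2}\,\dif y=-\tfrac{1}{2}\int_{\T}|x|^{2}\,\div x\,\dif y=0.
\end{equation*}
By the definition of $z$ in (M3) of Definition~\ref{smartingale solution1}, the above identity becomes
\begin{equation*}
\|x(t)\|_{L^{2}}^{2}=z(t)+2\int_{s}^{t}\!\!\int_{\T}\nabla x(r):\dif\mathfrak{N}(r)\,\dif r\qquad\text{on }[s,\mathfrak{t}].
\end{equation*}
On the other hand, the energy inequality in (M3) together with $\tr\mathfrak{R}=|x|^{2}+\tr\mathfrak{N}$ gives $\|x(t)\|_{L^{2}}^{2}+\int_{\T}\dif\tr\mathfrak{N}(t)\le z(t)$ for a.e.\ $t\ge s$. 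Subtracting yields, a.e.\ on $[s,\mathfrak{t}]$,
\begin{equation*}
\int_{\T}\dif\tr\mathfrak{N}(t)+2\int_{s}^{t}\!\!\int_{\T}\nabla x(r):\dif\mathfrak{N}(r)\,\dif r\le 0.
\end{equation*}

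\textbf{Step 3 (Gronwall closure).} Since $\mathfrak{N}(r)$ is positive semidefinite matrix-valued, a spectral decomposition $\mathfrak{N}=\sum_{k}\lambda_{k}v_{k}\otimes v_{k}\,\mu(\dif y)$ gives the pointwise bound
\begin{equation*}
\Bigl|\int_{\T}\nabla x(r):\dif\mathfrak{N}(r)\Bigr|\le \|\nabla x(r)\|_{L^{\infty}}\int_{\T}\dif\tr\mathfrak{N}(r).
\end{equation*}
Writing $g(r):=\int_{\T}\dif\tr\mathfrak{N}(r)$, the previous inequality reads $g(t)\le 2\int_{s}^{t}\|\nabla x(r)\|_{L^{\infty}}g(r)\,\dif r$ for a.e.\ $t\in[s,\mathfrak{t}]$. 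Since $\|\nabla x\|_{L^{\infty}}$ is continuous on $[s,\mathfrak{t}]$ by hypothesis, Gronwall's lemma forces $g\equiv 0$ a.e.\ on $[s,\mathfrak{t}]$. As $\mathfrak{N}\ge 0$, vanishing trace implies $\mathfrak{N}(t)=0$, hence $\mathfrak{R}(t)=x(t)\otimes x(t)$ for a.e.\ $t\in[s,\mathfrak{t}]$, proving the proposition.

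\textbf{Main obstacle.} The only nontrivial point is the rigorous justification of Step 1: because $\mathfrak{R}$ is measure-valued in space and the basis expansion $\sum_{i}\langle x,e_{i}\rangle\nabla e_{i}$ a priori converges only in $L^{2}$ (not in $C^{0}$), a direct pairing with $\dif\mathfrak{R}$ is not immediately legitimate. Resolving this by mollifying $x$, deriving the identity for the smooth $x^{\varepsilon}$ on the random interval $[s,\mathfrak{t}]$, controlling the commutator error via the $C^{1}$-regularity of $x$ (uniformly before $\mathfrak{t}$) together with the uniform bound on $\mathfrak{R}$ in $\mathcal{M}^{+}$ supplied by (M1), and passing to the limit, is the technical heart of the argument; once done, Steps 2--3 are completely elementary.
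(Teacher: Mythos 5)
Your proof is correct and follows essentially the same route as the paper's: an It\^o energy identity for $\|x\|_{L^2}^2$, cancellation of the $x\otimes x$ contribution via $\div x=0$, comparison with $z$ through (M3), and a Gronwall closure forcing $\mathfrak{N}=0$. The only real difference is that you run Gronwall pathwise on $g(t)=\int_{\T}\dif\tr\mathfrak{N}(t)$, whereas the paper takes expectations of $z-\|x\|_{L^2}^2$ and localizes with $\tau_R=\inf\{t\ge s:\|\nabla x(t)\|_{L^\infty}\ge R\}\wedge\mathfrak{t}$; your pathwise version is legitimate (and slightly leaner) because the martingale terms cancel exactly before any expectation is taken.
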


\begin{proof}
Due to the sufficient spatial regularity  the canonical process  $x$ under $P$, we may apply It\^o's formula to obtain for an arbitrary  $(\mathcal{B}^{0}_{SM,t})_{t\geq s}$-stopping time $\tau\leq \mathfrak{t}$
\begin{align}
\| x (t\wedge \tau) \|_{L^2}^2&= \| x (s) \|_{L^2}^2 + 2  \int_s^{t\wedge \tau} \int_{\T}
  \nabla x : \dif \mathfrak{N} (r) \dif r  \no\\
  &\quad+2\sum_{i=1}^{\infty}\int_{s}^{t\wedge\tau}\langle x(r),e_{i}\rangle \dif M^{i}_{r,s}+  \int_s^{t\wedge\tau} \| G(x(r)) \|^2_{L_2(U,L^2_\sigma)} \dif r
. \label{eq:2}
\end{align}

Now, we subtract {\eqref{eq:2}} from $z(t)$ and use the positive semidefinitness of $\mathfrak{N}$ as well as  (M3) to deduce
\begin{equation}  \label{eq:31}
\begin{aligned}
\E^P \left[(z-\|x\|_{L^2}^2) (t\wedge\tau) \right] &=
    -2\E^P \left[ \int_s^{t\wedge\tau} \int_{\T} \nabla x : \dif \mathfrak{N} (r) \dif r \right] \\
    & \lesssim  \E^P \left[\int_s^{t\wedge\tau} \|\nabla x\|_{L^{\infty}}\int_{\T}  \dif\tr \mathfrak{N} (r) \dif r \right]\\
    & \leq \E^P \left[\int_s^{t\wedge\tau} \|\nabla x\|_{L^{\infty}}(z-\|x\|_{L^2}^2) (r) \dif r \right]\\
    & \leq \mathbf{E}^{P} \left[ \int_s^{ t} \|\nabla x\|_{L^{\infty}} (z-\|x\|_{L^2}^2) (r\wedge\tau) \dif r \right] .
   \end{aligned}
 \end{equation}
 Here in the last step we used \eqref{eq:z1}.
Let us now define the stopping times
\begin{align*}
 \tau_R = \inf \{ t \geqslant s ; \|\nabla x(t)\|_{L^{\infty}} \geq R \} \wedge \mathfrak{t}. 
\end{align*}
Then $\tau_R \rightarrow \mathfrak{t}$ $P$-a.s. and it follows from \eqref{eq:31}
that
\begin{align*}
\E^P \left[ (z-\|x\|_{L^2}^2) (t\wedge \tau_R) \right] &
    \leq R\int_s^{t} \E^P \left[ (z-\|x\|_{L^2}^2) (r\wedge\tau_{R}) \right] \dif r
    .
    \end{align*}
By Gronwall's inequality and sending $R\to\infty$ we obtain for every $\mathfrak{t}\geq t\geq s$
$$
P(z(t\wedge\mathfrak{t})=\|x(t\wedge\mathfrak{t})\|_{L^{2}}^{2})=1,
$$
hence the claim follows by the continuity of $t\mapsto z(t)$ as well as $t\mapsto\|x(t)\|_{L^{2}}^{2}$ under $P$ and (M3).
\end{proof}

\subsection{Dissipative probabilistically weak solutions}

We conclude this section with the definition  of dissipative probabilistically weak solution.

\bd\label{probabilistically weak solution}
Let  $(x_{0},y_0,z_0)\in \mX$. A probability measure $P\in \mathscr{P}(\Omega_{PW})$ is  a  dissipative probabilistically weak solution to the Euler system \eqref{el}  with the initial value $(x_0,y_0,z_0,b_{0}) $ at time $s$ provided

\emph{(M1)} $P(x(t)=x_0, y(t)=y_0, z(t)=z_0, b(t)=b_{0},0\leq t\leq s)=1$,\\
\phantom{ ~} \hspace{.9cm} $P(\mathfrak{N}\in L^{\infty}_{\rm{loc}}([s,\infty);\mathcal{M}^{+}(\T;\mathbb{R}^{3\times 3}_{\rm{sym}})))=1$.

\emph{(M2)} Under $P$, $b$ is a cylindrical $({\mathcal{B}}_{PW,t}^0)_{t\geq s}$-Wiener process in $U$ starting from $b_0$ at time $s$ and for every $e_i\in C^\infty(\mathbb{T}^3)\cap L^2_\sigma$ and  $t\geq s$
$$\langle x(t)-x(s),e_i\rangle-\int^t_s\int_{\mathbb{T}^3}\nabla e_i:\dif\mathfrak{R}(r) \dif r=\int_s^t\langle e_i,G(x(r))\dif b(r)\rangle.$$

\emph{(M3)} P-a.s. for every $t\geq s$
\begin{align*}
z(t)=z(s)+2\int_s^t\langle x(r), G(x(r))\dif b(r)\rangle+\int_s^t\|G(x(r))\|_{L_2(U,L^2)}^2\dif r
\end{align*}
and
\begin{align*}
P\left(\int_{\T}\dif\tr\mathfrak{R}(t)\leq z(t) \ \textrm{\emph{for a.e.}}\  t\geq s\right)=1.
\end{align*}
\ed

Similar as above we can also introduce the following simplified dissipative probabilistically weak solution if $z_0=\|x_0\|_{L^2}^2$.
\bd\label{sprobabilistically weak solution}
Let  $(x_{0},y_0)\in L^2_\sigma\times \mathcal{M}^{+}(\T;\mathbb{R}^{3\times 3}_{\rm{sym}})$. A probability measure $P\in \mathscr{P}(\Omega_{SPW})$ is  a simplified dissipative probabilistically weak solution to the Euler system \eqref{el}  with the initial value $(x_0,y_0,b_{0}) $ at time $s$ provided

\emph{(M1)} $P(x(t)=x_0, y(t)=y_0, b(t)=b_{0},0\leq t\leq s)=1$, $P(\mathfrak{N}\in L^{\infty}_{\rm{loc}}([s,\infty);\mathcal{M}^{+}(\T;\mathbb{R}^{3\times 3}_{\rm{sym}})))=1$.

\emph{(M2)} Under $P$, $b$ is a cylindrical $({\mathcal{B}}_{SPW,t}^0)_{t\geq s}$-Wiener process in $U$ starting from $b_0$ at time $s$ and for every $e_i\in C^\infty(\mathbb{T}^3)\cap L^2_\sigma$ and  $t\geq s$

$$\langle x(t)-x(s),e_i\rangle-\int^t_s\int_{\mathbb{T}^3}\nabla e_i:\dif\mathfrak{R}(r) \dif r=\int_s^t\langle e_i,G(x(r))\dif b(r)\rangle.$$

\emph{(M3)} P-a.s. define for every $t\geq s$
\begin{align*}
z(t)=\|x_0\|_{L^2}^2+2\int_s^t\langle x(r), G(x(r))\dif b(r)\rangle+\int_s^t\|G(x(r))\|_{L_2(U,L^2)}^2\dif r.
\end{align*}

Then

\begin{align*}
P\left(\int_{\T}\dif\tr\mathfrak{R}(t)\leq z(t) \ \textrm{\emph{for a.e.}}\  t\geq s\right)=1.
\end{align*}

\ed
\br
(i) From this definition it is easy to see  that the law of $(x,y,z)$ under a dissipative probabilistically weak solution $P$ gives a dissipative martingale solution in the sense of Definition~\ref{martingale solution1}.

(ii) Similarly to Corollary \ref{cor:1} these two definitions of dissipative probabilistically weak solutions are equivalent under the condition that $z_0=\|x_0\|_{L^2}^2$.
\er

\section{Weak--strong uniqueness, stability and existence}

\subsection{Weak--strong uniqueness}\label{s:un}

As the next step, we show that dissipative solutions satisfy a weak--strong uniqueness principle.

\bt\label{weak-strong uniqueness}  Suppose that \emph{(G1)} and \emph{(Glip)} hold.
  Let $P$ be a simplified dissipative probabilistically weak solution to \eqref{el} starting from the initial value $(x_{0},y_{0},b_{0})$ at time $s\geq0$. Assume that  on the  stochastic basis $(\Omega_{SPW},{\mathcal{B}_{SPW}},({\mathcal{B}}_{SPW,t}^0)_{t\geq 0},P)$ together with the $({\mathcal{B}}_{SPW,t}^0)_{t\geq s}$-Wiener process $b$, there exists an $({\mathcal{B}}_{SPW,t}^0)_{t\geq s}$-adapted process  $u$ which is an analytically strong solution  to \eqref{el} up to a $({\mathcal{B}}_{SPW,t}^0)_{t\geq s}$-stopping time $\mathfrak{t}$ such that $u(\cdot\wedge\mathfrak{t})\in C([s,\infty);C^1(\T))$ $P$-a.s. and
  $
  P(u(s)=x(s))=1.
  $
  Then
  $$P\big(x(t\wedge\mathfrak{t})=u(t\wedge\mathfrak{t})\  \mbox{\emph{for all}}\   t\geq s\big)=1.$$
\et

\begin{proof}
First of all, we note that since $u$ is regular enough, the usual a priori estimate for the Euler equations holds true. In particular, we may apply It\^o's formula to the function $u\mapsto \|u\|_{L^{2}}^{2}$ and estimate using  Burkholder--Davis--Gundy's  inequality, the linear growth assumption (G1) on $G$ and Gronwall's lemma to obtain for any $N\in\N$
\begin{align}\label{boundu}
  \E^P\left[\sup_{t\in [s,N]}\|u(t\wedge\mathfrak{t})\|_{L^2}^{2}\right]<\infty.
\end{align}

In order to establish the weak--strong uniqueness principle, we introduce a modification of the so-called relative energy between the two solutions $x$ and $u$, which is adapted to our definition of dissipative solution. Namely, for $t\in[ s,\mathfrak{t}]$ we let
\begin{align*}
E_{rel}(t)&:=\frac{1}{2}z(t)-\<x(t),u(t)\>+\frac{1}{2}\|u(t)\|_{L^2}^2
= \frac{1}{2}\|x(t)-u(t)\|^2_{L^2}+\frac{1}{2}\left(z(t)-\|x(t)\|_{L^2}^2\right).
\end{align*}
As a consequence of \eqref{eq:z1}, we obtain
\begin{align}\label{eq:E}
P(E_{rel}(t\wedge\mathfrak{t})\geq0 \textrm{ for all }t\geq s)=1.
\end{align}

Let $\tau\leq \mathfrak{t}$ be a $({\mathcal{B}}^{0}_{SPW,t})_{t\geq s}$-stopping time.
Using the regularity of $u$, we may apply It\^{o}'s formula to obtain
\begin{align*}
\langle x(t\wedge\tau),u(t\wedge\tau)\rangle
&=\langle x(s),u(s)\rangle+\sum_{i=1}^{\infty}\int_s^{t\wedge\tau}  \langle u(r),e_i\rangle \<e_i,G(x(r))\dif b_r\>
\\&\quad\quad+\int_s^{t\wedge\tau} \int_{\mathbb{T}^3}\nabla u(r):\dif\mathfrak{R}(r) \dif r-\int_s^{t\wedge\tau} \int_{\mathbb{T}^3}x(r)\cdot\div(u(r)\otimes u(r))\dif\xi \dif r
\\&\quad\quad+\sum_{i=1}^{\infty}\int_s^{t\wedge\tau}  \langle x(r),e_i\rangle \<e_i,G(u(r))\dif b_r\>+\int_s^{t\wedge\tau}  \langle G(x(r)),G(u(r))\rangle_{L_2(U,L^{2})}\dif r
\\&=\langle x(s),u(s)\rangle+\sum_{i=1}^{\infty}\int_s^{t\wedge\tau}   \langle u(r),e_i\rangle \<e_i,G(x(r))\dif b_r\>
\\&\quad\quad+\int_s^{t\wedge\tau} \int_{\mathbb{T}^3}\nabla u(r):\dif\mathfrak{N}(r) \dif r+\int_s^{t\wedge\tau} \int_{\mathbb{T}^3}\nabla u(r):(x\otimes x) \dif\xi \dif r\\
&\quad\quad-\int_s^{t\wedge\tau} \int_{\mathbb{T}^3}x(r)\cdot\div(u(r)\otimes u(r))\dif \xi \dif r
\\&\quad\quad+\sum_{i=1}^{\infty}\int_s^{t\wedge\tau} \langle x(r),e_i\rangle \<e_i,G(u(r))\dif b_r\>+\int_s^{t\wedge\tau} \langle G(x(r)),G(u(r))\rangle_{L_2(U,L^{2})}\dif r.
\end{align*}
Furthermore, we have
\begin{align*}
&\int_s^{t\wedge\tau}\int_{\mathbb{T}^3}\nabla u(r):\dif\mathfrak{N}(r) \dif r+\int_s^{t\wedge\tau}\int_{\mathbb{T}^3}\nabla u(r):(x\otimes x) \dif\xi dr-\int_s^{t\wedge\tau}\int_{\mathbb{T}^3}x(r)\cdot\div(u(r)\otimes u(r))\dif\xi \dif r
\\&\quad\quad\quad=\int_s^{t\wedge\tau}\int_{\mathbb{T}^3}D u(r):\dif\mathfrak{N}(r) \dif r+\int_s^{t\wedge\tau}\int_{\mathbb{T}^3}(x-u)\cdot Du(r)(x-u) \dif\xi \dif r.
\end{align*}
Here $Du=\frac{1}{2}(\nabla u+\nabla u^t)$. Taking expectation and using Lemma \ref{lem:def} and \eqref{boundu} we obtain
\begin{align*}
&\E^P\left[E_{rel}(t\wedge \tau)\right]= \frac{1}{2}\E^P\left[\|u(t\wedge \tau)\|^2_{L^2}\right]+\frac{1}{2}\E^P\left[z(t\wedge \tau)\right]
\\&\quad-\E^P\left[\langle x(s),u(s)\rangle\right]-\E^P\left[\int_s^{t\wedge\tau}\langle G(x(r)),G(u(r))\rangle_{L_2(U,L^{2})}\dif r\right]
\\&\quad-\E^P\left[\int_s^{t\wedge\tau}\int_{\mathbb{T}^3}D u(r):\dif\mathfrak{N}(r) \dif r\right]-\E^P\left[\int_s^{t\wedge\tau}\int_{\mathbb{T}^3}(x-u)\cdot Du(r)(x-u) \dif\xi \dif r\right].
\end{align*}
Hence  combining this with (M3) and the energy equality for $u$, namely,\begin{align*}
&\E^P[\|u(t\wedge\tau)\|^2_{L^2}]= \E^P[\|u(s)\|^2_{L^2}]
+ \E^P\left[\int_s^{t\wedge\tau}\|G(u(r))\|_{L_2(U,L^2)}^2\dif r\right],
\end{align*}
 implies that
\begin{align*}
&\E^P\left[E_{rel}(t\wedge\tau)\right]\leq \frac{1}{2}\E^P[\|x(s)-u(s)\|_{L^2}^2]+\E^P\left[\int_s^{t\wedge\tau}\| Du\|_{L^{\infty}}(z-\|x\|_{L^2}^2)(r)\dif r\right]
\\
&\quad+ \frac{1}{2}\E^P\left[\int_s^{t\wedge\tau}\| G(x(r))-G(u(r))\|^2_{L_2(U,L^{2})}\dif r\right]+\E^P\left[\int_s^{t\wedge\tau}\| Du\|_{L^{\infty}}\|x(r)-u(r)\|^2_{L^2}\dif r\right]
.
\end{align*}

Choose $\tau$ as the stopping time
$$\tau_R = \inf \{ t \geqslant s ; \|D u(t)\|_{L^{\infty}} \geq R \}\wedge \mathfrak{t} $$  and using similar argument as in the proof of Theorem~\ref{classical} together with \eqref{eq:E},
  we obtain
$$P\big(x(t\wedge \mathfrak{t})=u(t\wedge \mathfrak{t})\big)=1\quad  \mbox{for all}\   t\geq s,$$ which implies the result by the time continuity of $x$ and $u$.
\end{proof}

\subsection{Stability}\label{s:stab}

The following result provides a stability of the set of all probabilistically weak solutions with respect to the initial time and the initial condition. We denote by $\mathscr{C}_{PW}(s,x_0,y_0,z_0,b_{0})$ the set of all dissipative martingale solutions with the initial condition $(x_0,y_0,z_0,b_{0})$ and the initial time $s$.

\bt\label{convergence}  Suppose that \emph{(G1)}, \emph{(G2)} hold.
  Let $(x_n, y_n, z_n)\in\mathbb{X}$, $s_{n}\in [0,\infty)$, $n\in\N$, and assume that
  $$
  (s_n, x_n, y_n, z_n, b_n) \rightarrow (s_0, x_0, y_0, z_0, b_0) \ \mbox{in}\
  [0, \infty) \times L^2_{\sigma} \times \mathcal{M}^+ (\mathbb{T}^3,
  \mathbb{R}^{3 \times 3}_{\tmop{sym}}) \times [0, \infty) \times U_1
  $$ as $n
  \rightarrow \infty$ and let $P_n \in \mathscr C_{PW} (s_n, x_n, y_n, z_n, b_n)$. Then
  there exists a subsequence $n_k$ such that the sequence $(P_{n_k})_{k \in
  \mathbb{N}}$ converges weakly to some $P \in \mathscr C_{PW} (s_0, x_0, y_0, z_0, b_0)$.
\et

\begin{proof}
  \tmtextit{Step 1: Tightness.} In the first step, we show that $(P_n)_{n \in
  \mathbb{N}}$ is tight in $\Omega_{PW}$. Since for every $n \in
  \mathbb{N}$ the measure $P_n$ is a dissipative probabilistically weak
  solution to {\eqref{el}} starting from the initial condition $(x_n, y_n,
  z_n, b_n)$ at time $s_n$ in the sense of Definition \ref{probabilistically
  weak solution}, the process $b (\cdummy + s_n) - b_n$ is under $P_n$ a
  cylindrical Wiener process on $U$ starting at time $0$ from the initial
  value $0$. Using the fact that the law of a cylidrical Wiener process is
  unique and tight on $C^{\alpha/2+\varepsilon} _{\tmop{loc}}([0, \infty) ; U_1)$ for $\alpha/2+\varepsilon<1/2$, the same argument
  as in the proof of Theorem 5.1 in {\cite{HZZ19}} implies that the law of $b$
  under the family of measures $(P_n)_{n \in \mathbb{N}}$ is tight on
  $C([0, \infty) ; U_1)\cap W_{\tmop{loc}}^{\alpha/2,q}([0,\infty);U_1)$ for $\alpha,q$ as in the Section~\ref{s:path}.

  Next, Lemma~\ref{lem:def} yields all the necessary uniform estimates for the
  remaining variables. More precisely, in view of Lemma~\ref{lem:def} and
  (M2), (M3) we deduce for all $N \in \mathbb{N}$ and $\kappa \in (0, 1 / 2)$
  $$
   \sup_{n \in \mathbb{N}} \E^{P_n} \left[ \sup_{t \in [0, N]} \| x (t)
     \|_{L^2}^2 + \sup_{r \neq t \in [0, N]} \frac{\| x (t) - x (r) \|_{H^{-
     3}}}{| t - r |^{\kappa}} \right] < \infty, $$
$$ \sup_{n \in \mathbb{N}} \E^{P_n} \left[ \sup_{t \in [0, N]} | z (t) | +
     \sup_{r \neq t \in [0, N]} \frac{| z (t) - z (r) |}{| t - r |^{\kappa}}
     \right] < \infty . $$
  By the compact embedding (see Theorem~1.8.5 in {\cite{BFH18}})
  $$ L^{\infty} (0, N ; L^2 (\mathbb{T}^3)) \cap C^{\kappa} ([0, N] ; H^{- 3})
     \subset C([0, N] ; L_w^2 (\mathbb{T}^3)), $$
  this implies tightness of the law of $x$ and $z$, respectively, under the
  family of measures $(P_n)_{n \in \mathbb{N}}$ on $C ([0, \infty) ; L^2_w)$ and $C ([0,
  \infty))$, respectively.

  In order to prove tightness of $y$ under $(P_n)_{n \in \mathbb{N}}$ we
  recall that
  $$ y (t) = y_n + \int_{s_n}^t \mathfrak{R} (r) \mathd r \quad P_n \mbox{-a.s.} $$
  and that $\mathfrak R (r)$ is a positive semidefinite matrix-valued measure by (M1).
  Accordingly, it follows from Lemma~\ref{lem:def}
  $$\sup_{n \in \mathbb{N}} \E^{P_n} \left[ \sup_{t \in [0, N]} \| y (t) \|_{\mathcal{M}
     (\mathbb{T}^3, \mathbb{R}^{3 \times 3}_{\tmop{sym}})} + \sup_{r \neq t
     \in [0, N]} \frac{\| y (t) - y (r) \|_{\mathcal M (\mathbb{T}^3, \mathbb{R}^{3
     \times 3}_{\tmop{sym}})}}{| t - r |} \right] $$
$$ \lesssim \sup_{n \in \mathbb{N}} \E^{P_n} \left[ \sup_{t \in [0, N]}
     \int_{\mathbb{T}^3} \dif \tr \mathfrak{R} (t) \right] + \sup_{n \in
     \mathbb{N}} \| y_n \|_{\mathcal M (\mathbb{T}^3, \mathbb{R}^{3 \times
     3}_{\tmop{sym}})} < \infty . $$
  Therefore, we deduce the tightness of $y$ under $(P_n)_{n \in \mathbb{N}}$
  on $$C([0, \infty) ; (\mathcal M (\mathbb{T}^3, \mathbb{R}^{3 \times
  3}_{\tmop{sym}}), w))\cap W^{\alpha,q}_{\mathrm{loc}}([0,\infty);W^{-k,2}(\mathbb{T}^3,\R^{3\times 3}_{\rm{sym}})).$$
Without loss of generality, we may assume that $P_n$ converges weakly to some probability measure $P$ on $\Omega_{PW}$.

  As the next step, we apply Jakubowski--Skorokhod's representation theorem
  (cf. Theorem 2.7.1 in {\cite{BFH18}}). After passing to a subsequence, we deduce that on some probability space
  $(\Omega,\mathcal F, \tmmathbf{P})$ there are random variables $(\tilde{x}_n,
  \tilde{y}_n, \tilde{z}_n, \tilde{b}_n)$ as well as $(\tilde{x}, \tilde{y},
  \tilde{z}, \tilde{b})$ such that
  \begin{enumerate}
    \item[(i)] the law of $(\tilde{x}_n, \tilde{y}_n, \tilde{z}_n, \tilde{b}_n)$
    under $\tmmathbf{P}$ is given by $P_n$ for each $n \in \mathbb{N}$ and the law of $(\tilde{x}, \tilde{y}, \tilde{z}, \tilde{b})$
    under $\tmmathbf{P}$ is given by $P$.

    \item[(ii)] $(\tilde{x}_n, \tilde{y}_n, \tilde{z}_n, \tilde{b}_n) \rightarrow
    (\tilde{x}, \tilde{y}, \tilde{z}, \tilde{b})$ in $\Omega_{PW},$ and  $\tilde{x}_n \rightarrow
    \tilde{x}$ in
   $ C([0, \infty) ; L^2_{w}). $
  \end{enumerate}
  \tmtextit{Step 2: Identification of the limit.} Our goal is to show that $P
  \assign \tmop{Law} (\tilde{x}, \tilde{y}, \tilde{z}, \tilde{b})$ is a
  dissipative probabilistically weak solution starting from the initial
  condition $(x_0, y_0, z_0, b_0)$ at time $s$. First, we observe that as the
  initial conditions are deterministic, it follows immediately that
  $$ \tmmathbf{P} \left( \tilde{x} (t) = x_0, \tilde{y} (t) = y_0, \tilde{z}
     (t) = z_0, \tilde{b} (t) = b_0, \hspace{1em} t \in [0, s] \right) = 1. $$
  Since the weak formulation of {\eqref{el}} in (M2) as well as the energy
  equality in (M3) is satisfied by $(x, y, z, b)$ under each measure $P_n$, and
  $(\tilde{x}_n, \tilde{y}_n, \tilde{z}_n, \tilde{b}_n)$ has the same law under $\bf{P}$, it
  follows from Theorem~2.9.1 in {\cite{BFH18}} that (M2), (M3) are also
  satisfied by $(\tilde{x}_n, \tilde{y}_n, \tilde{z}_n, \tilde{b}_n)$ under
  $\tmmathbf{P}$. More precisely, $\tilde{b}_n$ is a cylindrical Wiener process on $U$ starting from $b_n$ at time $s_n$ with respect to 
  $\sigma((\tilde{x}_n, \tilde{y}_n, \tilde{z}_n, \tilde{b}_n)(s),s\leq t)$. Taking the limit  it is easy to see that $\tilde{b}$ is a cylindrical
  Wiener process on $U$ starting from $b_0$ at time $s$ with respect to 
$\sigma((\tilde{x}, \tilde{y}, \tilde{z}, \tilde{b})(s),s\leq t)$. Since $P$ is supported on $\Omega_{PW}$, $b$ is a cylindrical Wiener process on $U$ under $P$ with respect to $(\mathcal{B}^0_{PW,t})_{t\geq0}$.  By Lemma 2.6.6 in
  {\cite{BFH18}} and (G2), we are
  able to pass to the limit in the stochastic integrals in (M2) and (M3) in $L^2(0,T)$ in probability. Indeed, since $\tilde{x}_n \rightarrow
    \tilde{x}$ in
   $ C([0, \infty) ; L^2_{w}),$ we obtain $\sup_n\sup_{t\in[0,T]}\|\tilde{x}_n(t) \|_{L^2}<\infty$ for any $T>0$ $\mathbf{P}$-a.s.
The  convergence of the quadratic variation term
  $$ \int_{s_n}^t \| G (\tilde{x}_n (r)) \|_{L_2 (U, L^2)}^2 \mathd r
     \rightarrow \int_s^t \| G (\tilde{x} (r)) \|_{L_2 (U, L^2)}^2 \mathd r
     \quad \tmmathbf{P}\mbox{-a.s.} $$
  also follows from (G2) and (G1). For the stochastic integral  in (M3),  we have
\begin{align*}
&\|1_{[s_n,T]}G(\tilde{x}_n(r))^*\tilde{x}_n-1_{[s,T]}G(\tilde{x}(r))^*\tilde{x}\|_U
\\
&\qquad\qquad\leq 1_{[s_n,s)}\|G(\tilde{x}_n(r))^*\tilde{x}_n ||_U +1_{[s,T]}\|(G(\tilde{x}_n(r))-G(\tilde{x}(r)))\|_{L_2(U,H)}\|\tilde{x}_n\|_H\\
&\qquad\qquad\qquad+\|1_{[s,T]}(G(\tilde{x}(r))^*(\tilde{x}_n-\tilde{x}))\|_U,\end{align*}
and the last term goes to zero by the compactness of $G^*$ and weak convergence of $\tilde{x}_n$ to $\tilde{x}$.
Then by (G1) and (G2) and dominated convergence theorem we deduce $\mathbf{P}$-a.s.
$$\int_0^T\|1_{[s_n,T]}G(\tilde{x}_n(r))^*\tilde{x}_n-1_{[s,T]}G(\tilde{x}(r))^*\tilde{x}\|_U^2\dif r\rightarrow0.$$
Thus by Lemma 2.6.6 in {\cite{BFH18}} we can pass the limit of the stochastic integral  in (M3). For the stochastic term in (M2) the argument is similar and actually easier.

  Regarding the compatibility conditions on the stresses $\mathfrak{N}$ and $\mathfrak{R}$ in (M1)
  and (M3) and the convergence of the stress term in (M2), let us denote consistently
\begin{equation}\label{eq:stress1}
 \tilde{\mathfrak R}_n \assign \partial_t \tilde{y}_n, \quad \tilde{\mathfrak{N}}_n
     \assign \tilde{\mathfrak{R}}_n - \tilde{x}_n \otimes \tilde{x}_n, \quad
     \tilde{\mathfrak{R}} \assign \partial_t \tilde{y}, \quad \tilde{\mathfrak{N}} \assign
     \tilde{\mathfrak{R}} - \tilde{x} \otimes \tilde{x} .
     \end{equation}

In order to pass to the limit in the stress term in (M2), we note that by \eqref{eq:stress1} and using the convergence of $\tilde y_{n}\to\tilde y$ in $C([0,\infty);(\mathcal{M}^{+}(\T,\R^{3\times 3}_{\rm{sym}}),w))$ $\mathbf{P}$-a.s. we obtain
\begin{equation*}
\begin{aligned}
\int_{s}^{t} \int_{\T}\nabla e_{i}:\dif\tilde{\mathfrak{R}}_{n}(r)\dif r &= \langle \nabla e_{i}, \tilde y_{n}(t)-\tilde y_{n}(s) \rangle \to \langle \nabla e_{i}, \tilde y(t)-\tilde y(s)\rangle =\int_{s}^{t} \int_{\T}\nabla e_{i}:\dif\tilde{\mathfrak{R}}(r)\dif r,
\end{aligned}
\end{equation*}
where the convergence takes place in $C([0,\infty))$ $\mathbf{P}$-a.s. Thus (M2) follows.

  Since $\tilde{\mathfrak R}_n$ is a measurable function of $\tilde{y}_n$ and hence
  $\tilde{\mathfrak{N}}_n$ is a measurable function of $(\tilde{x}_n, \tilde{y}_n)$,
  we obtain from the equality of joint laws
  \begin{equation}
    \tmmathbf{P} \left( \int_{\mathbb{T}^3} \dif\tr \tilde{\mathfrak R}_n (t)
    \leqslant \tilde{z}_n (t) \  \mbox{for a.e.}\  t
    \geqslant s_n \right) = P_n \left( \int_{\mathbb{T}^3} \dif\tr\mathfrak  R (t)
    \leqslant z (t) \  \mbox{for a.e.} \  t \geqslant s_n
    \right) = 1, \label{eq:991}
  \end{equation}
  \begin{equation}
    \tmmathbf{P} \left(\tilde{\mathfrak N}_n \in L^{\infty}_{\rm{loc}} ([s_n, \infty) ; \mathcal{M}^+
    (\mathbb{T}^3, \mathbb{R}^{3 \times 3}_{\tmop{sym}}))\right) = P_n \left(\mathfrak N \in
    L^{\infty}_{\rm{loc}} ([s_n, \infty) ; \mathcal{M}^+ (\mathbb{T}^3, \mathbb{R}^{3 \times
    3}_{\tmop{sym}}))\right) = 1, \label{eq:992}
  \end{equation}
  and as in Lemma~\ref{lem:def} for every $N \in \mathbb{N}$
 \begin{equation}\label{eq:F1}
 \sup_{n \in \mathbb{N}} \E^{\tmmathbf{P}} \left[ \esssup_{t \in [0,
     N]}\left( \int_{\mathbb{T}^3} \dif\tr \tilde{\mathfrak R}_n (t) \right)^{2}\right] = \sup_{n
     \in \mathbb{N}} \E^{P_n} \left[ \esssup_{t \in [0, N]} \left(
     \int_{\mathbb{T}^3}\dif\tr \mathfrak  R (t) \right)^{2}\right] < \infty .
     \end{equation}

 Hence, by Banach--Alaoglu's theorem applied in the dual space (see \cite[Theorem 2.11]{MNRR96} and \cite[Theorem 8.20.3]{Ed65})
 $$
 L^{2}_{w}(\Omega;L^{\infty}_{w}(0,N;\mathcal{M}(\T;\R^{3\times 3}_{\rm{sym}})))\simeq \left( L^{2}(\Omega ; L^{1}(0,N;C(\T;\R^{3\times 3}_{\rm{sym}}))\right)^{*}
 $$ where the subscript $w$ stands for weak-star measurable mappings, we deduce that there exists $F\in L^{2}_{w}(\Omega;L^{\infty}_{w}(0,N;\mathcal{M}(\T;\R^{3\times 3}_{\rm{sym}})))$ such that
 \begin{equation}\label{eq:F2}
\tilde{ \mathfrak{R}}_{n}\to F \ \mbox{weak-star in}\ L^{2}_{w}(\Omega;L^{\infty}_{w}(0,N;\mathcal{M}(\T;\R^{3\times 3}_{\rm{sym}}))).
\end{equation}
On the other hand, as a consequence of the convergence of $\tilde{y}_n$ to
  $\tilde{y}$ in (ii) we deduce
  $$  \tilde{\mathfrak R}_n \rightarrow
     \tilde{\mathfrak R} \hspace{1em} \tmop{in}\
     \mathcal{D}' ((0, \infty)\times\T) \  \tmmathbf{P} \mbox{-a.s.} $$
   Thus, we get $F=\tilde{\mathfrak{R}}$. Even though $F=F_{N}$ is defined for times $t\in[0,N]$, taking $N\to\infty$ we may extend $F$ to $[0,\infty)$ using uniqueness of the limit.
Since all $\tilde{\mathfrak{R}}_{n}$ are positive semidefinite, the same remains valid for $F$  and  the corresponding norm of $F$ is bounded by the left hand side of \eqref{eq:F1} by weak-star lower semicontinuity.
Therefore we have in particular
$$ \tmmathbf{P} \left( \int_{\mathbb{T}^3} \dif\tr \tilde{\mathfrak R} \in
     L^{\infty}_{\rm{loc}} ([s, \infty)) \right) = 1. $$

  Therefore, as
  $$ \tilde{z}_n - \int_{\mathbb{T}^3} \dif\tr \tilde{\mathfrak R}_n $$
  is a non-negative distribution $\tmmathbf{P}$-a.s. by {\eqref{eq:991}}, the
  same remains valid for the limit
  $$ \tilde{z} - \int_{\mathbb{T}^3} \dif\tr \tilde{\mathfrak R} . $$
  Since this is an $L^{\infty}_{\rm{loc}} ([s, \infty))$-function $\tmmathbf{P}$-a.s., we
  deduce that
  \begin{equation*}
    \tmmathbf{P} \left( \int_{\mathbb{T}^3} \dif\tr \tilde{\mathfrak R} (t)
    \leqslant \tilde{z} (t) \  \mbox{for a.e.} \  t
    \geqslant s \right) = 1. 
  \end{equation*}
  Hence (M3) follows.
  Finally, it remains to verify the second condition in (M1). To this end, we
  take $\eta \in \mathbb{R}^3$ and write
  $$ \tilde{\mathfrak N} : (\eta \otimes \eta) = \tilde{\mathfrak R} : (\eta \otimes \eta)
     - | \tilde{x} \cdummy \eta |^2 = \lim_{n \rightarrow \infty}
     [\tilde{\mathfrak R}_n : (\eta \otimes \eta) - | \tilde{x}_n \cdummy \eta |^2]
     + \lim_{n \rightarrow \infty} [| \tilde{x}_n \cdummy \eta |^2 - |
     \tilde{x} \cdummy \eta |^2], $$
  where the limit is taken in the sense of distributions $\mathcal{D}' ([0,
  \infty) \times \mathbb{T}^3)$ $\tmmathbf{P}$-a.s. According to
  {\eqref{eq:992}}, the first limit on the right hand side is non-negative,
  whereas the second limit is a non-negative due to the weak lower
  semicontinuity of the convex function $x \mapsto | x \cdummy \eta |^2$.
  Thus, $\tilde{\mathfrak N}$ is a positive semidefinite matrix-valued measure. In
  addition, by weak lower semicontinuity and equality of laws, we obtain
  $$ \E^{\tmmathbf{P}} \left[\sup_{t \in [0, N]} \| \tilde{x} (t) \|_{L^2}^2\right]
     \leqslant \liminf_{n \rightarrow \infty} \E^{\tmmathbf{P}} \left[\sup_{t \in
     [0, N]} \| \tilde{x}_n (t) \|_{L^2}^2\right]
      = \liminf_{n \rightarrow \infty}
     \E^{P_n} \left[\sup_{t \in [0, N]} \| x (t) \|_{L^2}^2\right] < \infty . $$
  Hence, in view of  the definition of $\tilde{\mathfrak{N}}$ we conclude
  $$ \tmmathbf{P} \left(\tilde{\mathfrak N} \in L^{\infty} _{\rm{loc}}([s, \infty) ; \mathcal{M}^+
     (\mathbb{T}^3, \mathbb{R}^{3 \times 3}_{\tmop{sym}}))\right) = 1, $$
  which completes the proof.
\end{proof}

In the same way, we can prove stability for dissipative martingale solutions. To this end, we denote by $\mathscr{C}(s,x_0,y_0,z_0)$ the set of all dissipative martingale solutions with the initial condition $(x_0,y_0,z_0)$ and the initial time $s$.

\bt\label{convergence1} Suppose that \emph{(G1)}, \emph{(G2)} hold.
   Let $(x_n, y_n, z_n)\in\mathbb{X}$, $s_{n}\in [0,\infty)$, $n\in\N$, and assume that
  $$
  (s_n, x_n, y_n, z_n) \rightarrow (s_0, x_0, y_0, z_0) \ \mbox{in}\
  [0, \infty) \times L^2_{\sigma} \times \mathcal{M}^+ (\mathbb{T}^3,
  \mathbb{R}^{3 \times 3}_{\tmop{sym}}) \times [0, \infty)
  $$ as $n
  \rightarrow \infty$ and let $P_n \in \mathscr C (s_n, x_n, y_n, z_n)$. Then
  there exists a subsequence $n_k$ such that the sequence $(P_{n_k})_{k \in
  \mathbb{N}}$ converges weakly to some $P \in \mathscr C (s_0, x_0, y_0, z_0)$.
\et

\begin{proof}
The proof is a consequence  of Theorem~\ref{convergence}. More precisely, by the martingale representation theorem, see \cite{DPZ92}, the dissipative martingale solution $P_{n}$ gives raise to a dissipative probabilistically weak solution  $Q_{n}\in\mathscr C_{PW}(s_n, x_n, y_n, z_n,0)$, such that the projection of $Q_{n}$ on the first three components is $P_{n}$. By Theorem~\ref{convergence} there is  a subsequence $(Q_{n_{k}})_{k\in\N}$ converging weakly to some $Q\in \mathscr C_{PW}(s_0, x_0, y_0, z_0,0)$. Then it is easy to see that the projection of $Q$ on the first three components belongs to $\mathscr{C}(s_0, x_0, y_0, z_0)$.
\end{proof}

\subsection{Existence}
\label{s:ex}

Based on the proof of stability in Theorem~\ref{convergence}, we also obtain existence of dissipative probabilistically weak solutions and as a corollary existence of dissipative martingale solutions.

\bt\label{existence} Suppose that \emph{(G1)}, \emph{(G2)} hold.
For every $s\in[0,\infty)$, $(x_0,y_0,z_{0})\in\mathbb{X}$ and $b_{0}\in U_{1}$, there exists  $P\in\mathscr{P}(\Omega_{M})$ which is a dissipative probabilistically weak solution to the Euler system \eqref{el} starting at time $s$ from the initial condition $(x_0,y_0,z_{0},b_0)$.

\et

\begin{proof}
Consider the following Galerkin approximation through stochastic Navier--Stokes equations with vanishing viscosity
\begin{equation*}
\aligned
 \dif u_n-\frac{1}{n}\Delta u_n \dif t+\Pi_n\mathbb{P}\div(u_n\otimes u_n)\dif t&=\Pi_nG(u_n)\dif B,
\\\div u_n&=0,
\\u_n(t)&=\Pi_nx_0,\quad 0\leq t\leq s,
\endaligned
\end{equation*}
where $\Pi_n$ and $\mathbb{P}$, respectively, are the Galerkin and Leray projection operator, respectively. It is classical to show that a solution exists on some probability space $(\Omega,\mathcal{F},\bf{P})$ with a cylindrical Wiener process $B$  on $U$ starting from $b_0$ at time $s$. For notational simplicity and without loss of generality, we may assume that the probability space and the Wiener process do not depend on $n$.

Define for $t\geq s$
\begin{equation}
\label{eq:zn}
\begin{aligned}
z_n(t)&:=z_{0}+2\int_s^t\langle u_n,G(u_n(r))\dif B(r)\rangle+\int_s^t\|\Pi_nG(u_n(r))\|_{L_2(U,L^2)}^2\dif r,\\
\mathfrak{R}_{n}&:=u_{n}\otimes u_{n},\qquad\qquad y_{n}(t) := y_{0}+\int_{s}^{t} \mathfrak{R}_n(r)\dif r,
\end{aligned}
\end{equation}
and  denote by  $P_n$  the joint law of $(u_n,y_{n},z_n,B) $. By an application of It\^o's formula to the function $u\mapsto \|u\|_{L^{2}}^{2}$ we obtain in particular
$$
\mathbf{P}\left(\|u_{n}(t)\|_{L^{2}}^{2}\leq z_{n}(t)\ \mbox{for all}\ t\geq s\right)=\mathbf{P}\left(\int_{\T}\dif\tr\mathfrak{R}_{n}(t)\leq z_{n}(t)\ \mbox{for all}\ t\geq s\right)=1,
$$
and estimating the right hand side of the energy equality in \eqref{eq:zn} by Burkholder--Davis--Gundy's inequality and (G1), we obtain for every $N\in\N$
$$
\sup_{n\in\N}\E^{\bf{P}}\left[\sup_{t\in[0,N]}\|u_{n}(t)\|_{L^{2}}^{2p}+\left(\frac{1}{n}\int_{s}^{N}\|\nabla u_{n}(r)\|_{L^{2}}^{2}\dif r\right)^{p}\right]<\infty.
$$

Therefore, the processes $(u_{n},y_{n},z_n,B) $, $n\in\N$, satisfy exactly the same uniform bounds as in the proof of Theorem~\ref{convergence} and their tightness follows exactly in the same way. The identification of the limit is also the same, the only difference being the artificial viscosity term which vanishes in the asymptotic limit.
\end{proof}

\section{Non-uniqueness in law}\label{s:law}

This section is devoted to the proof of non-uniqueness in law in the case of an additive noise. In particular, we consider the stochastic Euler system
\begin{equation}\label{ela}
\aligned
 \dif u+\div(u\otimes u) \dif t+\nabla P \dif t&=G \dif B,
\\
\div u&=0,
\endaligned
\end{equation}
where the coefficient $G$ satisfies the hypotheses (G3), (G4). The proof follows in three main steps. First, in Section~\ref{s:ci} we apply the convex integration method based on Baire's category theorem in order to construct infinitely many adapted weak solutions to  \eqref{ela} satisfying an energy inequality. Based on a general construction developed in Section~\ref{s:ext}, we show in Section~\ref{s:appl} that these convex integration solutions give raise to simplified probabilistically weak solutions defined on the full time horizon $[0,\infty)$. With this in hand, we are able to complete the proof of the main result of this section which reads as follows.

\begin{theorem}\label{Main results2}
Suppose that  \emph{(G3)}, \emph{(G4)} hold. Then simplified dissipative martingale solutions to \eqref{ela} are not unique. Moreover, for any given $T>0$, non-uniqueness holds on $[0,T]$.
\end{theorem}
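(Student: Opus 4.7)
The overall strategy follows the three-step architecture announced by the authors: (i) produce infinitely many adapted, analytically weak solutions of \eqref{ela} living on a random interval $[0,\mathfrak{t}]$ with a prescribed (and not merely bounded) energy profile; (ii) transfer them to the canonical path space and extend each one to $[0,\infty)$ as a dissipative martingale solution; (iii) read off non-uniqueness from the fact that the laws on $[0,\mathfrak{t}]$ already differ.

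\textbf{Step 1: Convex integration with prescribed energy.} Fix an initial datum $x_0\in L^2_\sigma$ and $T>0$. Working on a probability space carrying the cylindrical Wiener process $B$, set $z_0=\|x_0\|_{L^2}^2$ and shift by the stochastic convolution so that the fluctuating part obeys a random Euler equation. Using the Baire category version of convex integration of De~Lellis--Sz\'ekelyhidi (as in \cite{BFH20}), combined with the two new oscillatory lemmas Lemma~\ref{lem:osc} and Lemma~\ref{lem:oscs} and the energy-function construction Lemma~\ref{lem:osc2} (which uses Young integration to handle the iterated stochastic integral $\int_0^t\langle x,G\,\mathrm{d} B\rangle$ up to a stopping time), I produce, for any sufficiently regular random energy $e(t,\omega)$ compatible with the driving noise, a stopping time $\mathfrak{t}\le T$ that is strictly positive with positive probability and an $(\mathcal{F}_t)$-adapted analytically weak solution $u=u_e$ of \eqref{ela} on $[0,\mathfrak{t}]$ such that $u(0)=x_0$ and $\|u(t)\|_{L^2}^2=e(t)$ for $t\le\mathfrak{t}$. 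The key point is that the oscillatory lemma at $t=0$ eliminates the initial energy jump, so $z_0=\|x_0\|_{L^2}^2$ is compatible with (M3), while Lemma~\ref{lem:oscs} enforces the energy bound precisely at $\mathfrak{t}$. Choosing two distinct admissible energies $e_1\neq e_2$ yields two solutions $u_1,u_2$ whose laws on $C([0,T];L^2_w)$ differ, since their deterministic energies differ on a set of positive measure.

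\textbf{Step 2: Transfer to the canonical path space.} Push the joint law of $(u_i,y_i,z_i,B)$ onto $\Omega_{SPW}$, where $y_i(t)=y_0+\int_0^t u_i\otimes u_i\,\mathrm{d} r$. The obstacle here---and the main technical hurdle that the authors explicitly flag---is that the stopping time $\mathfrak{t}$ was defined using the iterated stochastic integral, which a priori depends on the probability measure. The plan is to rewrite this integral via the energy equality obtained in Step~1, expressing it as a difference $\tfrac12(z-\|x\|_{L^2}^2)$ minus a Lebesgue integral, and then to re-interpret $\mathfrak{t}$ on the path space $\Omega_{SPW}$ using Young integration (whose hypotheses are precisely what dictated the choice of exponents $\alpha,q$ in Section~\ref{s:path}). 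This yields a canonical $(\mathcal{B}_{SPW,t})$-stopping time, still denoted $\mathfrak{t}$, and two probability measures $P_1\ne P_2$ on $\Omega_{SPW}$ supported on trajectories that satisfy (M1)--(M3) on $[0,\mathfrak{t}]$.

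\textbf{Step 3: Extension beyond $\mathfrak{t}$.} Apply the general extension procedure of Section~\ref{s:appl} (a variant of the construction from \cite{HZZ19} adapted to dissipative solutions): for each $i\in\{1,2\}$, disintegrate $P_i$ at time $\mathfrak{t}$, glue to each trace the family of dissipative martingale solutions provided by Theorem~\ref{existence} starting from $(x(\mathfrak{t}),y(\mathfrak{t}),z(\mathfrak{t}))$ at time $\mathfrak{t}$ (with the energy value $z(\mathfrak{t})$ coming from the convex integration energy profile), and average. The stability Theorem~\ref{convergence} ensures that the resulting measure $\tilde P_i$ is a simplified dissipative martingale solution on $[0,\infty)$ with initial data $(x_0,y_0)$; on $[0,\mathfrak{t}]$ it still equals the (analytically weak) convex integration law, while on $[\mathfrak{t},\infty)$ it is only measure-valued, as cautioned in the introduction.

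\textbf{Step 4: Conclusion.} Since $P_1|_{[0,\mathfrak{t}]}\ne P_2|_{[0,\mathfrak{t}]}$, the extended measures $\tilde P_1,\tilde P_2$ are distinct simplified dissipative martingale solutions to \eqref{ela} with the same initial value, which proves non-uniqueness in law. Because $\mathfrak{t}\le T$ by construction, the non-uniqueness is already visible on $[0,T]$, giving the ``moreover'' statement. The main obstacle throughout is Step~1/Step~2: making the convex integration precise enough to prescribe the energy while keeping adaptedness, and recasting the associated stopping time purely in terms of the canonical path variables so that the probabilistic extension of Section~\ref{s:ext} applies.
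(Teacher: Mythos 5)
Your proposal is correct and follows essentially the same route as the paper: convex integration with a prescribed energy defect (the paper parametrizes this by $l\in[2,\infty]$ in $e_l$, precisely because for a fixed energy the Baire argument cannot distinguish solutions before the stopping time), transfer to the canonical path space by recasting the iterated stochastic integral through the energy equality and Young integration, extension beyond the stopping time via the measurable-selection/gluing machinery of Section~\ref{s:ext}, and conclusion from the differing energy profiles. The only minor imprecision is calling the energies ``deterministic''; in the paper they contain the martingale part $M^E$, and it is the drift coefficient $(\tfrac12-\tfrac1l)(t\wedge\tau_L)\|G\|_{L_2(U,L^2)}^2$, an intrinsic functional of the canonical process, that separates the laws.
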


We note that the restriction to the additive noise case satisfying (G3) is required for the construction by convex integration performed in Section~\ref{s:ci}. The results of Section~\ref{s:ext} apply to a more general multiplicative noise whereas the assumption (G4) is required for their application to the convex integration solutions in Section~\ref{s:appl}.

\subsection{Construction by convex integration}\label{s:ci}

In this subsection, we use the convex integration method in order to find an initial condition which gives raise to infinitely many weak solutions satisfying energy inequality, all adapted to the canonical filtration generated by a given Wiener process $B$. In particular, we fix a probability space $(\Omega,\mathcal{F},\mathbf{P})$ with a cylindrical Wiener process $B$ on $U$ satisfying $B(0)=0$ and let $(\mathcal{F}_{t})_{t\geq 0}$ be its normal filtration. We recall that $(\mathcal{F}_{t})_{t\geq 0}$ is the canonical filtration augmented by $\mathbf{P}$-null sets and that it is right continuous. Therefore, the $\sigma$-algebra $\mathcal{F}_{0}$ is generated by the $\mathbf{P}$-null sets, i.e., $\mathcal{F}_{0}=\sigma\{A\in\mathcal{F};\mathbf{P}(A)=0\}$ and, as a consequence, every $\mathcal{F}_{0}$-measurable random variable is $\mathbf{P}$-a.s. constant.
Moreover,  we restrict ourselves to the additive noise case.
In particular, we fix parameters $$ 0<\delta<1/4, \quad p\in(1,\infty),\quad \beta\in (0,1),\quad\frac{1}{2}-2\delta +\frac{1}{p}<\beta<\frac{1}{2}-\delta,$$
and note that under (G3),  it holds  for any $T>0$
$$
 \|GB\|_{C_TH^{(3+\sigma)/2}}<\infty,\quad \|GB\|_{C_T^{1/2-\delta}H^1}<\infty\quad \mathbf{P}\mbox{-a.s.},$$
$$\left\|\int_0^\cdot \langle GB,G\dif B\rangle\right\|_{C^{1/2-2\delta}_T}\lesssim\left\|\int_0^\cdot \langle GB,G\dif B\rangle\right\|_{W^{\beta,p}_T}\lesssim\left\|\int_0^\cdot \langle GB,G\dif B\rangle\right\|_{C^{1/2-\delta}_T}<\infty\quad \mathbf{P}\mbox{-a.s.}$$
For a given  $L>1$  we  define the stopping times as
$$
T^1_L=\inf\left\{t\geq 0: \|GB(t)\|_{H^{(3+\sigma)/2}}\geq L\right\}\wedge \inf\left\{t\geq 0: \|GB\|_{C^{1/2-2\delta}_tH^1}\geq L\right\}\wedge L,$$
$$T^2_L= \inf\left\{t\geq 0: \left\|\int_0^\cdot \langle GB,G\dif B\rangle\right\|_{W^{\beta,p}_t}\geq L\right\}\wedge L,$$

\begin{equation}
\label{stopping time}
T_L=T^1_L\wedge T^2_L,
\end{equation}
and we let $B_{L}$ be the stopped Wiener process $B_{L}(\cdot)=B(\cdot\wedge T_{L})$.

This leads us to the truncated Euler system
\begin{equation}\label{eq:trunc}
\begin{aligned}
\dif u +\div (u\otimes u) \dif t +\nabla P \dif t &= G \dif B_{L},\\
\div u&=0,
\end{aligned}
\end{equation}
which we consider on the time interval $[0,T]$ for a fixed $T>0$. The truncated system coincides with the original system \eqref{ela} on the random time interval $[0,T_{L}]$, hence the solutions constructed in this section solve the stochastic Euler system \eqref{ela} up to the stopping time $T_{L}$. In addition, for a suitable (deterministic) initial condition $u_{0}$ and for every choice of an additional parameter $l\in[2,\infty]$, we construct at least one  solution $u$
satisfying the following energy equality for $\mathbf{P}$-a.s. and a.e. $t\in(0,T_{L}]$
\begin{align}\label{eq:energy1}
\frac{1}{2}\|u(t)\|_{L^2}^2=&\frac{1}{2}\|u_{0}\|_{L^2}^2+ M^E_{t,0}+\left(\frac{1}{2}-\frac{1}{l}\right)(t\wedge T_{L})\|G\|_{L_2(U,L^2)}^2,
\end{align}
where $M^E_{t,0}=\int_0^t\langle u, G \dif B_L\rangle$.  In view of the Definition~\ref{martingale solution1}, we therefore quantify the error in the energy equality through the defect
$\frac{1}{l}\|G\|_{L_2(U,L^2)}^2(t\wedge T_L).$
This permits us to conclude the existence of infinitely many solutions, i.e., at least one solution for every $l\in[2,\infty]$. Even though for a fixed $l$, the Baire category argument used below in fact yields the existence of infinitely many solutions to the truncated equation \eqref{eq:trunc} on $[0,T]$, we are not able to deduce that the non-uniqueness holds for the original equation, i.e. already on the random time interval $[0,T_{L}]$ with probability one. Therefore, we use different values of $l\in[2,\infty]$ to conclude the non-uniqueness.

 We rewrite the Euler system \eqref{eq:trunc} with additive noise by setting $v=u-GB_{L}$, which gives
\begin{equation}\label{eq:v}
\begin{aligned}
\partial_{t}v +\div((v+GB_{L})\otimes (v+GB_{L}))+\nabla P & =0,\\
\div v &= 0.
\end{aligned}
\end{equation}
This is the system we apply the convex integration to.
We define the energy functional
\begin{align}
\label{con:e}
e_l(v)(t):=\frac{1}{2}\|v(0)\|_{L^2}^2+ \int_0^t\langle v+GB_L, G \dif B_L\rangle+\left(\frac{1}{2}-\frac{1}{l}\right)(t\wedge T_{L})\|G\|_{L_2(U,L^2)}^2.
\end{align}
We note that if  a sequence $v_n$ satisfies $v_n(0)=v(0)$ and
$$
\esssup_{\omega\in\Omega}\|v_n-v\|_{C^{\alpha}_{T}H^{-1}}\to 0
$$
for some $\alpha>1/2+2\delta$, then due to Lemma~\ref{lem:young} we obtain
$$
\esssup_{\omega\in\Omega}\sup_{t\in[0,T]}\left|\int_0^t\langle v_n-v, G \dif B_L\rangle\right|\lesssim \esssup_{\omega\in\Omega} \|v_n-v\|_{C^\alpha_{T}H^{-1}}\esssup_{\omega\in\Omega}\|GB_{L}\|_{C^{1/2-2\delta}_{T}H^{1}}\rightarrow 0,$$
which implies that
\begin{equation}
\label{enconvergence}
\esssup_{\omega\in\Omega}\sup_{t\in[0,T]}|e_l(v_n)\rightarrow e_l(v)|\to 0.
\end{equation}

Let us now introduce some preliminary notations and definitions needed in the sequel. For the  filtration $(\mathcal{F}_{t})_{t\geq 0}$ we define its extension to negative times by $\mathcal{F}_{t}=\mathcal{F}_{0}$ whenever $t<0$. We denote by $\mathbb{R}^{3\times 3}_{0,\rm{sym}}$ the set of symmetric trace-less $3\times 3$ matrices and for $(w,H)\in \mathbb{R}^{3}\times\mathbb{R}^{3\times 3}_{0,\rm{sym}}$ we denote
$$
e(w,H)= \frac{3}{2}\lambda_{\rm{max}}\left(w\otimes w -H\right),
$$
where $\lambda_{\rm{max}}(Q)$ is the largest eigenvalue of a symmetric $3\times 3$ matrix $Q$. By \cite[Lemma 3]{DelSze3}, we have
\begin{equation}\label{boundwh}
\frac{1}{2}|w|^2\leq e(w,H),\quad \|H\|\leq \frac{4}{3}e(w,H),
\end{equation}
where $\|\cdot\|$ means the operator norm of the matrix.
Finally, we denote by $d$ a metric which metrizes the weak topology on bounded sets of $L^{2}$. We say that a Borel random variable  $F : \Omega \to  X$ ranging in a topological  $X$ has a compact range provided there is a (deterministic) compact set $K\subset X$ such that $F\in  K$ $\mathbf P$-a.s.

The following result is a modification of Lemma 5.11 from \cite{BFH20}. In particular, it permits us to construct an initial condition with a prescribed energy, which gives raise to a subsolution.

\bl\label{lem:osc}
 Let $[e,w,H]$ be an $(\mathcal{F}_t)_{t\geq 0}$-adapted stochastic process such that
 $$[e,w,H]\in C([0,T]\times \mathbb{T}^3; (0,\infty)\times\R^3\times \R^{3\times 3}_{0,\rm{sym}})\quad \mathbf{P}\mbox{-a.s.} $$
 with compact range and
 \begin{equation}\label{eq:222}
 e(w,H)<e-\delta,\quad \textrm{ for all } (t,x)\in [0,T]\times \mathbb{T}^3\quad \mathbf{P}\mbox{-a.s.}
 \end{equation}
for some deterministic constant $\delta>0$. Then for any $\varepsilon\in(0,T)$ there exists a sequence $[w_n,V_n]\in C^{\infty}_{c}((-T,T)\times \mathbb{T}^3; \R^3\times \R^{3\times 3}_{0,\rm{sym}})$  enjoying the following properties:

 i) the process $[w_n,V_n]$ is $(\mathcal{F}_t)_{t\in\mathbb{R}}$-adapted such that $[w_{n},V_{n}]\in C([-T,T]\times\mathbb{T}^3; \R^3\times \R^{3\times 3}_{0,\rm{sym}})$ $\mathbf{P}$-a.s. with compact range and $\supp( w_n,V_n)\subset [-\varepsilon,\varepsilon]\times \mathbb{T}^3$ $\mathbf{P}$-a.s.;

 ii) we have $\mathbf{P}$-a.s. $$\partial_t w_n+\div V_n=0, \quad \div w_n=0;$$

 iii) we have
 $$\esssup_{\omega\in\Omega}\sup_{t\in[0,T]} d(w_n(t),0)\rightarrow0 \textrm{ as } n\rightarrow\infty;$$

 iv) we have  $$e(w+w_n,H+V_n)<e-\delta_n,\quad \textrm{ for all } (t,x)\in [0,T]\times \mathbb{T}^3\quad \mathbf{P}\mbox{-a.s.}$$
 for some deterministic constant $\delta_n>0$;

 v) if $\bar{e}>0$ is such that $e\leq \bar{e}$, then the following holds $\mathbf{P}$-a.s.
 $$\liminf_{n \rightarrow \infty} \int_{\mathbb{T}^3}\frac12| w+w_n |^2(0,x) \dif x\geqslant \int_{\mathbb{T}^3}\frac12| w |^2(0,x)\dif x+\frac{c}{\bar{e}}
\int_{\mathbb{T}^3} \left( e - \frac{1}{2} | w |^2 \right)^{2}(0,x) \dif x$$
  for some universal constant $c>0$.

  \el

  \begin{proof}
Comparing to Lemma 5.11 in \cite{BFH20} we see that the new result is \emph{v)}. In order to prove it, we need to construct oscillations around the time $t=0$, which requires an extension of the functions $[e,w,H]$ to negative times. The chosen value for negative times is not important provided the extended process remains adapted to $(\mathcal{F}_{t})_{t\in\mathbb{R}}$. Therefore, we  set $[e,w,H](t)=[e,w,H](0)$ whenever $t<0$. Note that since we extended the filtration to negative times in the same way, $[e,w,H]$ is $(\mathcal{F}_{t})_{t\in\mathbb{R}}$-adapted and in particular $[e,w,H](t)$ is $\mathbf{P}$-a.s. deterministic for $t\leq 0$.

As the proof of Lemma 5.11 in \cite{BFH20}, we intend to approximate $[e,w,H]$ by piecewise constant functions $[e^{\rm{ap}},w^{\rm{ap}},H^{\rm{ap}}]$ such that for a certain (deterministic) $\gamma>0$  it holds
\begin{equation}\label{eq:split}
|[e^{\rm{ap}},w^{\rm{ap}},H^{\rm{ap}}](t,x)-[e,w,H](t,x)|<\gamma
\end{equation}
$\mathbf{P}$-a.s. for all $(t,x)\in [-T,T]\times\mathbb{T}^{3}$.
 Moreover, we choose $\gamma>0$ so that
\begin{equation}\label{eq:223}
\left|\left(e^{\rm{ap}} - \frac12 |w^{\rm{ap}}|^{2}\right)(t,x)-\left(e-\frac12|w|^{2}\right)(t,x)\right|<\frac{\delta}{2}.
\end{equation}
$\mathbf{P}$-a.s. for all $(t,x)\in [-T,T]\times\mathbb{T}^{3}$.
Choosing such a $\gamma>0$ deterministic is possible  due to  the assumption of  compact range in the space of continuous functions for the process $[e,w,H]$. Indeed, it implies that $[e,w,H]$ is uniformly continuous with respect to  $(t,x)$ (essentially) uniformly in $\omega$.
Therefore, for a given $\varepsilon>0$ we split the space-time domain $(-\varepsilon,\varepsilon]\times\mathbb{T}^{3}$ into finitely many disjoint boxes of the form $(t_{j},t_{j+1}]\times (a_{i},b_{i}]=:(t_{j},t_{j+1}]\times K_{i}$, where $\{t_{j};\,j=1,\dots,j_{\rm{max}}\}$ is a partition of $ [-\varepsilon,\varepsilon]$, $a_{i}=(a_{i,1},a_{i,2},a_{i,3}),\,b_{i}=(b_{i,1},b_{i,2},b_{i,3})\in \mathbb{T}^{3}$ for $i=1,\dots,i_{\rm{max}}$, and $(a_{i},b_{i}]=\Pi_{\ell=1}^{3}(a_{i,\ell},b_{i,\ell}]$. On each of these boxes, the piecewise constant approximation is defined as
$$
[e^{\rm{ap}},w^{\rm{ap}},H^{\rm{ap}}](t,x):=[e,w,H](t_{j},y_{i}) \ \mbox{for some}\ y_{i}\in K_{i}
$$
whenever $(t,x)\in(t_{j},t_{j+1}]\times K_{i}$. Note that this choice in particular preserves adaptedness. In addition, the boxes are chosen in a way that the time $t=0$ appears in the middle of one of the time intervals, say $(t_{j^{*}},t_{j^{*}+1}]$,  and that $\mathbf{P}$-a.s. \eqref{eq:split} holds true.

Using this approximation  in the proof of \cite[Lemma 5.11]{BFH20},  \emph{i)-iv)} from the statement of the present lemma follow and it only remains to prove $\emph{v)}$. To this end, we observe that on each of the above boxes we can construct oscillations by following the approach of   \cite[Lemma~5.6]{BFH20}. Moreover, if $w_{n}$, $n\in\N$, is the sequence of oscillations on the box $(t_{j^{*}},t_{j^{*}+1}]\times K_{i}$  then the arguments of Step 5 in the proof of \cite[Lemma 5.11]{BFH20} also imply
\begin{equation}\label{new:5.23}
  \liminf_{n \rightarrow \infty} \int_{K_{i}} | w_n |^2 (0, x) \dif x
  \geqslant \frac{c}{e(t_{j^{*}},y_{i})} \left( e(t_{j^{*}},y_{i}) - \frac{1}{2} | w(t_{j^{*}},y_{i}) |^2 \right)^2 \frac{
  |K_{i} |}{2} ,
\end{equation}
where $c>0$ is a universal constant.

The oscillations are in particular compactly supported within the interior of the corresponding box. Thus, they can be connected to  obtain the desired oscillatory perturbations $w_{n}$, $n\in\N$, of $w^{\rm{ap}}$ on the whole domain $(-\varepsilon,\varepsilon)\times\mathbb{T}^{3}$. As a consequence, it holds
\begin{equation}\label{new:5.42}
\begin{aligned}
\liminf_{n \rightarrow \infty} \int_{\mathbb{T}^3} | w_n |^2 (0, x) \dif x& =
   \liminf_{n \rightarrow \infty} \int_{\cup_{i=1}^{i_{\rm{max}}} K_{i}} | w_n |^2 (0, x) \dif x =
   \liminf_{n \rightarrow \infty} \sum_{i = 1}^{i_{\max}}
   \int_{K_{i}} | w_n |^2 (0, x) \dif x\\
  &\geqslant \sum_{i = 1}^{i_{\max}} \liminf_{n \rightarrow \infty}
  \int_{K_{i}} | w_n |^2 (0, x) \dif x
  \geqslant \sum_{i = 1}^{i_{\max}}
  \frac{c}{e} \left( e - \frac{1}{2} | w |^2 \right)^2 (t_{j^{*}}, y_{i})\frac{|
  K_{i} |}{2} \\
  &\geqslant  \frac{c}{\bar e}\int_{\mathbb{T}^3} \left( e^{\rm{ap}} - \frac{1}{2} | w^{\rm{ap}}
  |^2 \right)^2 (0, x) \dif x.
  \end{aligned}
\end{equation}
Thus, we obtain
\begin{equation*}
\begin{aligned}
\liminf_{n \rightarrow \infty} \int_{\mathbb{T}^3}\frac12|w+w_{n}|^{2}(0,x) \dif x &= \int_{\mathbb{T}^3}\frac12|w|^{2}(0,x) \dif x+\liminf_{n \rightarrow \infty} \int_{\mathbb{T}^3} \frac12| w_n |^2 (0, x) \dif x\\
&\geqslant \int_{\mathbb{T}^3}\frac12|w|^{2}(0,x) \dif x+ \frac{c}{2\bar e} \int_{\mathbb{T}^3} \left(e^{\rm{ap}} - \frac{1}{2} | w^{\rm{ap}}
  |^2 \right)^{2} (0, x) \dif x\\
  &\geqslant \int_{\mathbb{T}^3}\frac12|w|^{2}(0,x) \dif x+\frac{c}{2\bar e} \int_{\mathbb{T}^3} \left(e - \frac{1}{2} | w
  |^2 -\frac{\delta}{2}\right)^{2} (0, x) \dif x,
\end{aligned}
\end{equation*}
where in the first equality we used \emph{iii)} and the last inequality follows from \eqref{eq:223} together with the fact that by \eqref{eq:222} and \eqref{boundwh} we have
$$
\left(e-\frac12 |w|^{2}\right)(0,x)  >\delta
$$
$\mathbf{P}$-a.s. for all $x\in\T$.
The same argument permits to finally conclude
$$
\liminf_{n \rightarrow \infty} \int_{\mathbb{T}^3}\frac12|w+w_{n}|^{2}(0,x) \dif x \geqslant  \int_{\mathbb{T}^3}\frac12|w|^{2}(0,x) \dif x+\frac{c}{ 8\bar e} \int_{\mathbb{T}^3} \left(e - \frac{1}{2} | w
  |^2 \right)^{2} (0, x) \dif x,
$$
which completes the proof.
  \end{proof}

Let $e$ be a given $(\mathcal{F}_{t})_{t\geq0}$-adapted energy such that $e\in C([0,T]\times\mathbb{T}^{3};(0,\infty))$ $\mathbf{P}$-a.s. with a compact range. We define the collection of subsolutions corresponding to $e$ by
\begin{align*}
X_{0,e}&=\Big\{ v:\Omega\rightarrow C_{\rm{loc}}((0,T]\times \mathbb{T}^3;\mathbb{R}^3)\cap C([0,T],L^2_w);\, v \textrm{ is } (\mathcal{F}_t)_{t\geq0}\textrm{-adapted,}\\\
&\qquad\textrm{there exists }
H: \Omega\rightarrow C_{\rm{loc}}((0,T]\times \mathbb{T}^3;\mathbb{R}^{3\times3}_{0,\rm{sym}})\ (\mathcal{F}_t)_{t\geq0}\textrm{-adapted such that }\\
&\qquad
\partial_t v+\div H=0, \ \div v=0,\\
&\qquad \mbox{for every }\varepsilon\in(0,T)\ \mbox{there is deterministic } \delta_{\varepsilon}>0\ \mbox{such that}\\
&\qquad e(v+GB_{L},H)<e-\delta_\varepsilon\textrm{ for all }t\in [\varepsilon,T],\\
&\qquad(v,H)\textrm{ as a function in }C([\varepsilon,T]\times \mathbb{T}^3;\mathbb{R}^{3}\times \mathbb{R}^{3\times3}_{0,\rm{sym}}) \textrm{ is of compact range}\Big\}.
\end{align*}

The next result shows how to find a subsolution with a prescribed energy at time $t=0$.

\bl \label{lem:osc1}
Let $e$ be $(\mathcal{F}_{t})_{t\geq 0}$-adapted such that $e\in C([0,T];(0,\infty))$ $\mathbf P$-a.s.  with compact range and for some deterministic $\delta>0$
 \begin{equation}
 \label{con:e1}
 e(GB_{L},0)<e-\delta\quad\mbox{for all }t\in[0,T].
 \end{equation}
Then there exists $\bar{v}\in X_{0,e}$ such that
 $$\frac{1}{2}\int_{\mathbb{T}^{3}}| \bar{v}(0) |^2 \dif x= e(0)\quad \mathbf P\mbox{-a.s}.$$
 \el

 \begin{proof} The idea of the proof  follows from \cite[Section 5]{DelSze3}.
 We proceed iteratively and apply Lemma \ref{lem:osc}. In particular, letting $v_0=0$, $H_0=0$ we have
$v_{0}\in X_{0,e}$.  By the definition of the stopping time $T_{L}$ we know that $GB_L\in C([0,T]\times \T)$ $\mathbf{P}$-a.s. is of compact range. Then applying repeatedly Lemma \ref{lem:osc} to $[e,v_k+GB_L,H_k]$ we  construct a sequence $v_k=v_{k-1}+w_{k,n}$ for $n$ large enough, $k\in\N$, so that
$$
  v_k \in X_{0,e}, \quad
\esssup_{\omega\in\Omega}\sup_{t\in[0,T]}  d (v_k, v_{k - 1}) < \frac{1}{2^k}, \quad e(v_k+GB_L,H_k)<e-\delta_k,\, t\in[0,T],
$$
$$(v_k,H_k)\textrm{ as a function in }C([0,T]\times \mathbb{T}^3;\mathbb{R}^{3}\times \mathbb{R}^{3\times3}_{0,\rm{sym}}) \textrm{ is of compact range,}
  $$
  $$ \esssup_{\omega\in\Omega} \sup_{t\in[0,T]} \left| \int_{\T} (v_k -
  v_{k - 1}) \cdot v_m \dif x \right| < \frac{1}{2^k} \quad \textrm{for all } m = 0, \ldots, k - 1,
$$
\begin{equation}
\label{supp}
\textrm{supp}(v_k-v_{k-1})\subset[-1/2^k,1/2^k]\times\mathbb{T}^3,
\end{equation}
and $\mathbf{P}$-a.s.
\begin{equation}\label{alpha}
 \int_{\T} \frac{1}{2} | v_k|^2 (0,x) \dif x  \geqslant \int_{\T} \frac{1}{2} | v_{k -
  1} |^2 (0,x) \dif x + c \alpha_k^2,
\end{equation}
where
$$ \alpha_k = \int_{\T} \left( e - \frac{1}{2} | v_{k - 1} |^2 \right) (0,x)\,\dif x
   > 0 .$$
We recall  that since the $\sigma$-algebra $\mathcal{F}_{0}$ is generated by the $\mathbf{P}$-null sets,  the random variables $e$, $v_{k}$,  and consequently also $\alpha_{k}$, $k\in\mathbb{N}_{0}$,  are $\mathbf{P}$-a.s. deterministic at time $t=0$.

Therefore, the sequence
$$
  \int_{\T} \frac{1}{2} | v_k |^2 (0,x) \dif x
$$
is $\mathbf{P}$-a.s. non-decreasing and there exists $\bar{v}$ adapted such that
$
\esssup_{\omega\in\Omega}\sup_{t\in[0,T]}d(v_{k},\bar{v})\to 0
$ as $k\to\infty$.
By \eqref{supp}, we know that for any $\varepsilon>0$ there exists $k_0\in\N$ such that
$$\bar{v}(t)=v_k(t)=v_{k_0}(t)\quad t\in[\varepsilon,T],\,k\geq k_0,$$
which implies that $\bar{v}\in X_{0,e}$.
By the construction of $v_{k}$  and \eqref{boundwh} and the compact range of $e$  we obtain in particular
$$
\sup_{k\in\N_{0}}\esssup_{\omega\in\Omega}  \int_{\T} \frac{1}{2} | v_k (0,x)|^2 \dif x < \infty .
$$
Hence $ \int_{\T} \frac{1}{2} | v_k (0,x)|^2 \dif x $  has a $\mathbf{P}$-a.s. limit $Y$ and  by \eqref{alpha} we deduce
$\alpha_k^2 \rightarrow 0$ $\mathbf{P}$-a.s. Consequently, it follows from the definition of $\alpha_k$ that $\mathbf{P}$-a.s.
$$
  \int_{\T} \frac{1}{2} | v_k |^2 (0) \dif x \uparrow  e(0).
$$
Next, we have for all $n > m$
$$
  \int_{\T} \frac{1}{2} | v_n - v_m |^2 (0) \dif x = \int_{\T} \frac{1}{2} | v_n
  |^2 (0) \dif x - \int_{\T} \frac{1}{2} | v_m |^2 (0) \dif x - 2 \int_{\T}
  \frac{1}{2} ((v_n - v_m) \cdot v_m )(0) \dif x,
$$
where
$$
\int_{\T} \frac{1}{2} ((v_n - v_m) \cdot v_m )(0) \dif x = \sum_{k = 0}^{n
   - m - 1} \int \frac{1}{2} ((v_{m + k + 1} - v_{m + k}) \cdot v_m)
   (0) \dif x < \sum_{k = 0}^{n - m - 1} \frac{1}{2^{m + k}} \rightarrow
   0
   $$
  $\mathbf{P}$-a.s. as $m\to\infty$.
Accordingly, we obtain the $\mathbf{P}$-a.s. strong convergence $v_k (0) \rightarrow \bar{v}
(0)$ in $L^2$, which implies
$$\int_{\T} \frac{1}{2} | \bar{v} |^2 (0) \dif x=e (0) \quad \mathbf P\mbox{-a.s.} $$
and completes the proof.
\end{proof}

Now, we define the collection of subsolutions with variable energy by taking $e_{l}$ given in \eqref{con:e} and setting
\begin{align*}
X_{0}&=\Big\{ v:\Omega\rightarrow C_{\rm{loc}}((0,T]\times \mathbb{T}^3;\mathbb{R}^3)\cap C([0,T],L^2_w);\, v \textrm{ is }(\mathcal{F}_t)_{t\geq 0}\textrm{-adapted with $v(0)=\bar{v}(0)$, }
\\
&\qquad\textrm{there exists } H: \Omega\rightarrow C_{\rm{loc}}((0,T]\times \mathbb{T}^3;\mathbb{R}^{3\times3}_{0,\rm{sym}})\ (\mathcal{F}_t)_{t\geq0}\textrm{-adapted such that }\\
&\qquad \partial_t v+\div H=0,\  \div v=0,\\
&\qquad  \mbox{for every }\varepsilon\in(0,T)\ \mbox{there is deterministic }\delta_{\varepsilon}>0\ \mbox{such that}\\
&\qquad e(v+GB_{L},H)<e_{l}(v)-\delta_\varepsilon\textrm{ for all }t\in [\varepsilon,T],\\
&\qquad (v,H)\textrm{ as a function in } C([\varepsilon,T]\times \mathbb{T}^3) \textrm{ is of compact range}\Big\}.
\end{align*}

As the next step, we find a suitable energy $e$ for the application of  Lemma~\ref{lem:osc1}.

\begin{lemma}\label{lem:osc2}
Suppose that \emph{(G3)} holds. There exists a deterministic function $e\in C([0,T];(0,\infty))$ satisfying \eqref{con:e1} such that the associated $\bar v$ constructed in Lemma~\ref{lem:osc1} belongs to $X_0$.
\end{lemma}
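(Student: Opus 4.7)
The plan is to construct a deterministic energy of the form
\[
e(t) = M - N\, t^{1/2 - 2\delta}
\]
with positive constants $M$ and $N$ to be fixed, and to verify both \eqref{con:e1} and that the $\bar v$ produced by Lemma~\ref{lem:osc1} for this $e$ lies in $X_0$. First I would reduce $\bar v \in X_0$ to the pathwise inequality $e_l(\bar v)(t) \geq e(t)$ for all $t \in [0,T]$ almost surely. Indeed, Lemma~\ref{lem:osc1} gives $\bar v \in X_{0,e}$ with $e(\bar v + GB_L, \bar H) < e(t) - \delta'_\varepsilon$ on $[\varepsilon,T]\times\T$ and deterministic $\delta'_\varepsilon>0$; the desired inequality then permits replacing $e$ by $e_l(\bar v)$ at the cost of a smaller but still deterministic gap. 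Moreover, the identity $\tfrac{1}{2}\|\bar v(0)\|_{L^2}^2 = e(0) = M$ coming from Lemma~\ref{lem:osc1} forces $e_l(\bar v)(0) = e(0)$, so only a deterministic lower bound on the increment $e_l(\bar v)(t)-e(0)$ has to be established.

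The core estimate proceeds from the decomposition
\[
e_l(\bar v)(t) - e(0) = \int_0^t \langle \bar v, G\,dB_L\rangle + \int_0^t \langle GB_L, G\,dB_L\rangle + \left(\tfrac12 - \tfrac1l\right)(t\wedge T_L)\|G\|_{L_2(U,L^2)}^2.
\]
For the middle term It\^o's formula applied to $\tfrac12\|GB_L\|_{L^2}^2$ yields the identity $\int_0^t\langle GB_L, G\,dB_L\rangle = \tfrac12\|GB_L(t)\|_{L^2}^2 - \tfrac12(t\wedge T_L)\|G\|_{L_2(U,L^2)}^2$, which is pathwise bounded using the stopping-time estimate $\|GB_L(t)\|_{H^1}\leq L\,t^{1/2-2\delta}$ (from $GB_L(0)=0$ and $\|GB_L\|_{C^{1/2-2\delta}_tH^1}\leq L$). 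For the first term, since $\bar v$ is absolutely continuous in time with $\partial_t\bar v = -\div\bar H$ and carries no quadratic-variation contribution, It\^o's product rule for the pairing $\langle \bar v, GB_L\rangle$, together with integration by parts in space, yields
\[
\int_0^t \langle \bar v, G\,dB_L\rangle = \langle \bar v(t), GB_L(t)\rangle - \int_0^t \langle \bar H, \nabla GB_L\rangle\,ds,
\]
where the boundary term at $t=0$ vanishes thanks to $GB_L(0)=0$. Combining the subsolution pointwise bounds $\|\bar v\|_{L^\infty_{t,x}} \lesssim \sqrt{M} + L$ and $\|\bar H\|_{L^\infty_{t,x}}\leq \tfrac43 M$ with $\|GB_L(t)\|_{H^1}\leq Lt^{1/2-2\delta}$ and $\|\nabla GB_L\|_{L^\infty}\leq CL$ (via the Sobolev embedding $H^{(3+\sigma)/2}\subset W^{1,\infty}$ in three dimensions), I obtain an explicit deterministic constant $K = K(M,L,T,\|G\|)$ with
\[
e_l(\bar v)(t) - e(0) \geq -K\, t^{1/2-2\delta}, \quad t \in [0,T], \text{ a.s.}
\]

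Finally, the strategy is to take $N\geq K(M,L,T,\|G\|)$, so that $e(t)\leq e_l(\bar v)(t)$ holds pathwise on $[0,T]$, and then choose $M$ large enough that $e(T) = M - NT^{1/2-2\delta} > \tfrac32 C^2 L^2 + \delta$, which guarantees \eqref{con:e1} since $\|GB_L\|_{L^\infty(\T)}\leq CL$ a.s. The continuity, positivity and deterministic nature of $e$ then follow directly from this construction. The main obstacle is the circular coupling between $M$ and $K$: because $\bar H$ is controlled by $M$, the constant $K$ depends roughly linearly on $M$, and the second constraint reduces to an inequality of the form $M(1 - c_1 LT^{3/2-2\delta}) > \text{(lower-order terms)}$, which can be solved for $M$ provided $c_1 LT^{3/2-2\delta} < 1$. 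This compatibility between $L$ and $T$ is acceptable since the parameter $L$ may be chosen depending on $T$ in the subsequent probabilistic arguments of Section~\ref{s:appl}.
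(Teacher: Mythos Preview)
Your reduction of $\bar v\in X_0$ to the pathwise inequality $e_l(\bar v)(t)\geq e(t)$ is correct and matches the paper's strategy. Where you diverge is in how you bound the increment $e_l(\bar v)(t)-e(0)$ from below: you rewrite $\int_0^t\langle \bar v, G\,dB_L\rangle$ via the It\^o product rule as $\langle \bar v(t), GB_L(t)\rangle-\int_0^t\langle \bar H,\nabla GB_L\rangle\,ds$, and you handle the iterated integral $\int_0^t\langle GB_L, G\,dB_L\rangle$ by It\^o's formula for $\tfrac12\|GB_L\|_{L^2}^2$, whereas the paper uses the Young integral estimate of Lemma~\ref{lem:young} for the first term and the stopping time $T_L^2$ directly for the second.

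Two concrete problems arise. First, the embedding $H^{(3+\sigma)/2}\subset W^{1,\infty}$ in three dimensions requires $\sigma>2$, which (G3) does not guarantee; this is easily fixed by pairing $\bar H\in L^\infty\subset L^2$ with $\nabla GB_L\in L^2$ instead. Second, and more seriously, your integration-by-parts route makes $K$ depend \emph{linearly} on $M=e(0)$ through the $\bar H$-term (since $\|\bar H\|\lesssim M$), which is what produces the side condition $c_1 L T^{3/2-2\delta}<1$. The paper avoids this entirely: interpolating between $\|\bar v\|_{C^1_TH^{-1}}\lesssim \sup e$ and $\|\bar v\|_{L^\infty}\lesssim (\sup e)^{1/2}+C(L)$ gives $\|\bar v\|_{C^\beta_TH^{-1}}\lesssim e(0)^{(1+\beta)/2}+C(L)$ for $\beta\in(1/2+2\delta,1)$, so Lemma~\ref{lem:young} yields a bound \emph{sublinear} in $e(0)$. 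One can then simply take $e(0)$ large, with no relation imposed between $L$ and $T$.

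Your claim that the constraint is harmless is not justified. In the application (Theorem~\ref{th:con} and Section~\ref{s:appl}) the construction is run with $T=L$ and $L>1$, so your condition becomes $c_1 L^{5/2-2\delta}<1$, which need not admit any $L>1$. One might try to relax the requirement $L>1$ and carry a small $L$ through the remainder of the section, but you have not done this, and it is precisely this complication that the paper's Young-integral argument is designed to eliminate.
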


\begin{proof}
The constructed energy $e$ has to satisfy \eqref{con:e1} as well as
\begin{equation}\label{eq:224}
e(t)\leq e_{l}(\bar{v})(t)=\frac12\|\bar v(0)\|_{L^{2}}^{2}+\int_{0}^{t}\langle \bar v+GB_{L},G\dif B_{L}\rangle+\left(\frac12-\frac{1}{l}\right)(t\wedge T_{L})\|G\|^{2}_{L_{2}(U,L^{2})},
\end{equation}
for all $t\in(0,T]$. Lemma~\ref{lem:osc1} then in particular implies that $\bar v$ has the desired energy at time $0$, i.e.,
$$
\frac12\|\bar v(0)\|_{L^{2}}^{2}=e(0)=e_{l}(\bar{v})(0).$$
In order to find a suitable energy $e$ so that this can be achieved, we shall therefore find a lower bound for $e_{l}(\bar v)$ based on the a priori information we get for $\bar v$.

To this end, we recall that
if $\bar{v}$ was constructed from Lemma~\ref{lem:osc1} with a given energy $e$ then in particular there is $\bar{H}$ such that
$$
\partial_t \bar{v} + {\div} \bar{H} = 0 \textrm{ in } \mathcal{D}'((0,T)\times \T), \qquad e (\bar{v} +G B_L, \bar{H}) < e -
   \delta_{\varepsilon} \quad \mbox{for }t\in[\varepsilon,T].
   $$
Thus, by \eqref{boundwh} it follows
$$ \frac{1}{2} | \bar{v} +G B_L |^2 \leqslant e \quad\mbox{on } (0, T], \qquad
   \frac{1}{2} | \bar H | \lesssim e \quad\mbox{on } (0, T],
   $$
and we deduce
$$ |\bar v|\lesssim \sqrt{e} +C(L),\qquad \sup_{s,t\in[0,T],s\neq t}\frac{\|  \bar{v}(t)-  \bar{v}(s)\|_{H^{- 1}}}{|t-s|} \lesssim \sup_{t\in[0,T]}\|\bar H \|_{L^2} \lesssim
   \sup_{t\in[0,T]}e,
   $$
   which by interpolation implies that for any $\beta\in(0,1)$
   $$ \|\bar v\|_{C_T^\beta H^{-1}}\lesssim \sup_{t\in[0,T]}e^{\frac{1+\beta}{2}} +C(L).
   $$
   All the implict constants are independent of $\bar{v}$.
Next, for $\beta>1/2+2\delta$ we estimate one stochastic integral by Lemma \ref{lem:young} using the definition of the stopping time $T_{L}$ in \eqref{stopping time} as
$$
\left|\int_{0}^{t}\langle\bar v, G\dif B_{L}\rangle\right|\leq C(L)\left(\sup_{t\in[0,T]}e^{\frac{1+\beta}{2}}+1\right)t^{1/2-2\delta}
$$
while for the other stochastic integral we get by the definition of the stopping time $T_{L}$
$$
\left|\int_{0}^{t}\langle GB_{L}, G\dif B_{L}\rangle\right|\leq C(L)t^{1/2-2\delta}.
$$
Finally, we observe that
$$
0\leq \left(\frac12-\frac{1}{l}\right)(t\wedge T_{L})\|G\|^{2}_{L_{2}(U,L^{2})} \lesssim t.
$$
Even though positive,  this term behaves linearly in $t$ hence it cannot compensate for the (possibly negative) martingale part for small times. As a consequence, \eqref{eq:224} leads us to the requirement for some $\beta\in (1/2+2\delta,1)$
$$
e(t)\leq e(0) -C(L)\left(\sup_{t\in[0,T]}e^{\frac{1+\beta}{2}}+1\right)t^{1/2-2\delta}.
$$
This in particular implies that  $e(t)\leq e(0)$ and therefore we deduce that for a given $L>1$ and $T>0$ there exists $e(0)$ sufficiently large so that for some $\beta\in (1/2+2\delta,1)$
$$
e(t):=e(0) -C(L)(e(0)^{\frac{1+\beta}{2}}+1)t^{1/2-2\delta}
$$
satisfies all the above requirements as well as \eqref{con:e1} since the $L^\infty$-norm of $GB_L$ is bounded before the stopping time by the Sobolev embedding theorem. The proof is complete.
\end{proof}

The following version of the oscillatory lemma permits us to construct solutions which satisfy the energy equality at a given stopping time $\tau$.

\bl\label{lem:oscs}
Let $\varepsilon\in( 0,T)$, $\alpha_0>0$ and let $\tau\leq T$ be an $(\mathcal{F}_{t})_{t\geq0}$-stopping time.
 Let $[e,w,H]$ be an $(\mathcal{F}_{t})_{t\geq0}$-adapted stochastic process such that
 $$[e,w,H]\in C([\varepsilon,T]\times \mathbb{T}^3; (0,\infty)\times\R^3\times \R^{3\times 3}_{0,\rm{sym}})\quad \mathbf{P}\mbox{-a.s.} $$
 with compact range and
 $$e(w,H)<e-\delta,\quad \textrm{ for all } (t,x)\in [\varepsilon,T]\times \mathbb{T}^3\quad \mathbf{P}\mbox{-a.s.}$$
 for some deterministic constant $\delta>0$. Let
 \begin{equation}\label{eq:226}
 I_{\varepsilon}[w]={\mathbf E}^{\mathbf{P}}\bigg[1_{\{\tau\geq 2\varepsilon\}}\int_{\T} \left(\frac{1}{2}|w|^2-e\right)(\tau)dx\bigg]<-\alpha_0.
 \end{equation}
Then  there exists a sequence $[w_n,V_n]\in C^{\infty}([\varepsilon,T]\times \mathbb{T}^3; \R^3\times \R^{3\times 3}_{0,\rm{sym}}),$ satisfying $(w_n,V_n)(\varepsilon,x)=0$, $x\in\T$, and enjoying the following properties:

 i) the process $[w_n,V_n]$ is $(\mathcal{F}_{t})_{t\geq0}$-adapted  and $[w_n,V_n]\in C([\varepsilon,T]\times \mathbb{T}^3; \R^3\times \R^{3\times 3}_{0,\rm{sym}})$ $\mathbf P$-a.s. with compact range;

 ii) we have $$\partial_t w_n+\div V_n=0, \quad \div w_n=0;$$

 iii) we have for any $\alpha\in(0,1)$
 $$\esssup_{\omega\in\Omega}\|w_n\|_{C^{\alpha}_{T}H^{-1}}\rightarrow0  \textrm{ as } n\rightarrow\infty,$$

 iv) we have  $$e(w+w_n,H+V_n)<e-\delta_n,\quad \textrm{ for all } (t,x)\in [\varepsilon,T]\times \mathbb{T}^3 \ \mathbf{P}\mbox{-a.s.}$$
 for some deterministic constant $\delta_n>0$.

 v) the following holds
 $$\liminf_{n \rightarrow \infty} I_{\varepsilon}[w+w_n] \geqslant I_{\varepsilon}[w]+ c\alpha_0^2,$$
 with $c$ depending only on $\|e\|_{L^\infty}$.
\el

\begin{proof}
The proof follows the lines of \cite[Section 4.5]{DelSze3} and therefore we only discuss the steps that need to be done differently. Let us define the shifted grid of size $h$ as in \cite[Section~4.5]{DelSze3} and use the same notation for $C_{\zeta,i}$, $\Omega^{h}_{\nu}$, $\tau^{h}_{\nu}$, $\nu=1,2$. In order to be consistent with our first oscillatory lemma, Lemma~\ref{lem:osc}, we use a different notation for the velocity and the corresponding oscillations: the lemma is applied to $w$ (instead of $v$ in \cite{DelSze3}) and the oscillations are denoted by $w_{n}$ (instead of $\tilde v_{N}$ in \cite{DelSze3}). We extend $[e,w,H]$ to $[T,T+\varepsilon]$ such that $[e,w,H](t)=[e,w,H](T)$ for $t\in [T,T+\varepsilon]$.

Similarly to  Lemma~\ref{lem:osc}, the compact range of the involved stochastic processes permits to choose the piecewise constant approximations uniformly in $\omega$. On the other hand, in the stochastic setting we are not free to assign an arbitrary value on each of the small cylinders $C_{\zeta,i}$, nor to choose the admissible segment arbitrarily: in order to preserve adaptedness, we shall always assign the value at the minimal time of $C_{\zeta,i}$ and choose the admissible segment  by a measurable selection as in Lemma~5.6 in \cite{BFH20}.

Let us now define
$$
E_{h}(t,x)=E_{h}(t_{i},x_{\zeta})=\frac12|w(t_{i},x_{\zeta})|^{2}-e(t_{i},x_{\zeta})\quad \mbox{for }(t,x)\in C_{\zeta,i},
$$
where $t_{i}$ is the minimal time  and $x_{\zeta}$ is an arbitrary spatial point in the cylinder $C_{\zeta,i}$.
Since $[e,w]$ is of compact range,
$$\lim_{h\rightarrow0}\int_{\Omega^h_{\nu}} E_h(t)\dif x=\frac{1}{2}\left(\frac{3}{4}\right)^3\int_{\T} \left(\frac{1}{2}|w|^2-e\right)(t)\dif x,$$
uniformly in $t\in[\varepsilon, T]$ $\mathbf{P}$-a.s. and this implies for $\nu\in\{1,2\}$
 \begin{align*}
&  \lim_{h\rightarrow0}\mathbf{E}^{\mathbf{P}}\left[1_{\{\tau\geq 2\varepsilon\}\cap \{\tau \in \tau_{\nu}^h\}}\int_{\Omega^h_{\nu}} E_h(\tau)\dif x\right]=\frac{1}{2}\left(\frac{3}{4}\right)^3\mathbf{E}^{\mathbf{P}}\left[1_{\{\tau\geq 2\varepsilon\}\cap \{\tau \in \tau_{\nu}^h\}}\int_{\T} \left(\frac{1}{2}|w|^2-e\right)(\tau)\dif x\right].
\end{align*}
Moreover, we have
$$
\sum_{\nu=1}^{2}\mathbf{E}^{\mathbf{P}}\left[1_{\{\tau\geq 2\varepsilon\}\cap \{\tau \in \tau_{\nu}^h\}}\int_{\T} \left(\frac{1}{2}|w|^2-e\right)(\tau)\dif x\right]\leq I_\varepsilon[w].$$
According to \eqref{eq:226}, there exists  $\nu\in\{1,2\}$ such that
$$
\mathbf{E}^{\mathbf{P}}\left[1_{\{\tau\geq 2\varepsilon\}\cap \{\tau \in \tau_{\nu}^h\}}\int_{\T} \left(\frac{1}{2}|w|^2-e\right)(\tau)\dif x\right]\leq-\alpha_0/2,$$
hence given $[e,w]$ we may choose $h$ sufficiently small so that
\begin{equation}\label{boundE}\mathbf{E}^{\mathbf{P}}\left[1_{\{\tau\geq 2\varepsilon\}\cap \{\tau \in \tau_{\nu}^h\}}\int_{\Omega^h_{\nu}}| E_h|(\tau)\dif x\right]\geq c\alpha_0\end{equation}
for a universal constant $c>0$.
Denoting by $w_{n}$ the oscillatory sequence, we observe  in particular that (60) in \cite{DelSze3} rewrites as
  \begin{equation}\label{eq:60}
    \lim_{n \rightarrow \infty} \int_{\Omega^h_{\nu}} \frac{1}{2} |
    w_{n}|^2 (t) \dif x \geqslant \frac{c}{M} \int_{\Omega^h_{\nu}} |
    E_h |^2 (t) \dif x
  \end{equation}
  uniformly in $t \in \tau_{\nu}^h \cap [2\varepsilon, T]$ $\mathbf P$-a.s. Here $M=\sup_{\Omega\times [\varepsilon,T]\times \mathbb{T}^3}e$.
Now,  we have
  \begin{align}\label{new3}
  \begin{aligned}
   \liminf_{n \rightarrow \infty} I_{\varepsilon} [w+w_{n}] &= \liminf_{n
     \rightarrow \infty} \mathbf{E}^{\mathbf{P}}\left[ 1_{\{\tau\geq 2\varepsilon\}}
     \int_{\mathbb{T}^3} \left(\frac{1}{2} |w+w_{n}|^2-e\right)(\tau)\dif x\right]
  \\ &=
     \liminf_{n \rightarrow \infty} \mathbf{E}^{\mathbf{P}}\left[1_{\{\tau\geq 2\varepsilon\}}\left(\int_{\T} \left( \frac{1}{2} | w|^2 -e\right)(\tau) \dif x +
     \int_{\mathbb{T}^3} \frac{1}{2} | w_{n} |^2(\tau) \dif x\right) \right]
     \\
     &\geq I_\varepsilon[w]+
\mathbf{E}^{\mathbf{P}}\left[1_{\{\tau\geq 2\varepsilon\}\cap \{\tau \in \tau_{1}^h\}}   \liminf_{n \rightarrow \infty}
     \int_{\Omega^h_{1}} \frac{1}{2} | w_{n}|^2(\tau) \dif x\right]\\
     &\qquad+  \mathbf{E}^{\mathbf{P}}\left[1_{\{\tau\geq 2\varepsilon\}\cap \{\tau \in \tau_{2}^h\}}   \liminf_{n \rightarrow \infty}
     \int_{\Omega^h_{2}} \frac{1}{2} | w_{n}|^2(\tau)\dif x \right]
     \\ &\geq I_\varepsilon[w] +\frac{c}{M}\left(\mathbf{E}^{\mathbf{P}}\left[1_{\{\tau\geq 2\varepsilon\}\cap \{\tau \in \tau_{1}^h\}}
     \int_{\Omega^h_{1}} | E_h |(\tau) \dif x \right]\right)^2\\
     &\qquad+ \frac{c}{M}\left(\mathbf{E}^{\mathbf{P}}\left[1_{\{\tau\geq 2\varepsilon\}\cap \{\tau \in \tau_{2}^h\}}
     \int_{\Omega^h_{2}} | E_h |(\tau) \dif x \right]\right)^2 .
     \end{aligned}
     \end{align}
     Here in the last inequality we used \eqref{eq:60}  and H\"{o}lder's inequality.

Thus the result in \emph{v)} follows from \eqref{new3} and \eqref{boundE}.

Comparing to Lemma~\ref{lem:osc}, it only remains to prove \emph{iii)}. By a modification of (5.29) and (5.31) in \cite{BFH20} we obtain
$$\esssup_{\omega\in\Omega}\|w_n\|_{C^1_TH^{-1}}\leq C, \quad \esssup_{\omega\in\Omega}\|w_n\|_{C_TH^{-1}}\rightarrow0,$$
which implies \emph{iii)} by interpolation.
\end{proof}

\begin{remark}
\label{modification}
\emph{i)} Although our proof of Lemma~\ref{lem:oscs} follows the approach of \cite{DelSze3}, we cannot obtain the corresponding result for the linear functional $$\mathbf E^{\mathbf{P}}\bigg[\inf_{t\in[\varepsilon,T]}\int_{\T} \left(\frac{1}{2}|w|^2-e\right)(t)\dif x\bigg]$$ since we cannot change the order of $\mathbf E^{\mathbf{P}}$ and $\inf$.

\emph{ii)} By a modification of \cite[Lemma 5.11]{BFH20}, we can also deduce that \cite[Lemma 5.11]{BFH20} holds  with \emph{iii)} in \cite[Lemma 5.11]{BFH20} strengthened  to the following statement: for any $\alpha\in(0,1)$
$$\esssup_{\omega\in\Omega}\|w_n\|_{C_{T}^{\alpha}H^{-1}}\rightarrow0  \textrm{ as } n\rightarrow\infty.$$
\end{remark}

Finally, we have all in hand to prove the main result of this section.

\bt \label{th:con}
Suppose that \emph{(G3)} holds. Let  $\tau$ be a strictly positive $(\mathcal{F}_{t})_{t\geq0}$-stopping time such that $\tau\leq T$. Let $e\in C([0,T];(0,\infty))$  be the energy constructed in Lemma~\ref{lem:osc2} and let $\bar v\in X_{0}$ be the corresponding velocity constructed in Lemma~\ref{lem:osc1}. There exist infinitely many $(\mathcal{F}_{t})_{t\geq0}$-adapted solutions $v_l$ to the system \eqref{eq:v} on $[0,T]$ with the initial condition $\bar{v}(0)$ satisfying
$\mathbf P$-a.s.
$$
e_l(v_{l})=\frac{1}{2}\|v_l+GB_{L}\|_{L^2}^2\quad \mbox{for a.e. }t\in (0,T),
$$
$$e_l(v_{l})(0)=\frac{1}{2}\|v_l(0)\|_{L^2}^2=\frac{1}{2}\|\bar{v}(0)\|_{L^2}^2,$$
and
\begin{equation}\label{eq:stop}
e_l(v_{l})(\tau)=\frac{1}{2}\|(v_l+GB_{L})(\tau)\|_{L^2}^2.
\end{equation}
In particular, the results hold for $T=L,\tau=T_L$.
 \et

\begin{proof}
\emph{Step 1: $L^{\infty}$-bound for subsolutions.} First,  we show that the subsolutions in $X_{0}$ are bounded essentially uniformly in all the variables $\omega,t,x$ and the bound is only determined by $\|\bar{v}(0)\|_{L^{2}}^{2}$ and the constants $L,T$. In other words, it is independent of the particular subsolution. To this end, we recall that
if $v \in X_0$ then there exists $H$ such that
$$ \partial_t v + \div H = 0, $$
and by \eqref{boundwh} for $t\in(0,T]$
$$ \frac{1}{2} \| v(t)\|_{L^\infty}^2 \leq C (L) + \| \bar{v} (0) \|_{L^{2}}^2 + \int_0^t \langle v +
   G B_L, G \dif B_L \rangle + \left( \frac{1}{2} - \frac{1}{l} \right) (t
   \wedge T_L) \| G \|_{L_{2}(U,L^{2})}^2,
   $$
$$ c \| H(t) \|_{L^{\infty}} \leq \| \bar{v} (0) \|_{L^{2}}^2 + \int_0^t \langle v + GB_L, G \dif B_L \rangle
   + \left( \frac{1}{2} - \frac{1}{l} \right) (t \wedge T_L) \| G
   \|_{L_{2}(U,L^{2})}^2.
   $$
Therefore, for $t\in[0,T]$ and $\beta\in(1/2+2\delta,1)$ and using Lemma~\ref{lem:young}
$$ \frac{1}{2} \| v\|_{L^\infty_{T,x}}^2 \lesssim C (L, T) + \| \bar{v} (0) \|_{L^{2}}^2 + \|
   v \|_{C^\beta_{T}H^{- 1}} C (L,T) ,
   $$
and
\begin{align*}
 \|  v \|_{C^1_{T}H^{- 1}} &\lesssim\| H
   \|_{L^{\infty}_{T}L^{2}} + \| \bar{v} (0) \|_{L^{2}} \lesssim C (L, T) +
   \| \bar{v} (0) \|^2_{L^{2}} +  \|
   v \|_{C^\beta_{T}H^{- 1}} C (L,T)
   \\&\lesssim C (L, T) +
   \| \bar{v} (0) \|^2_{L^{2}} +  \|
   v \|_{C^1_{T}H^{- 1}} ^\beta \|v\|^{1-\beta}_{L^\infty_{T,x}}C (L,T). \end{align*}
Hence substitute the first estimate into the second one and using Young's inequality we obtain
$$
 \|  v \|_{C_{T}^1H^{- 1}} \lesssim C (L, T) + \|
   \bar{v} (0) \|_{L^{2}},$$
and accordingly
$$
 \frac{1}{2} \| v \|_{L^\infty_{T,x}}^2 \lesssim C (L, T) + \| \bar{v} (0) \|_{L^{2}}^2 .
$$
This is the desired uniform $L^{\infty}$-bound on the subsolutions in $X_{0}$. This also implies that $e_l(v)\in C([0,T])$ is of compact range for $v\in X_0$.

\emph{Step 2: Definition of  functionals.}
Let $X$ be the completion of $X_0$ with respect to the metric
$$
D(v,w):=\mathbf E^{\mathbf{P}}\left[\sup_{t\in[0,T]}d(v(t),w(t))\right].
$$
Thus $X$ is a complete metric space.
For $\varepsilon\in (0,T/2)$, we introduce the functionals
$$
I_\varepsilon[v]:=\mathbf E^{\mathbf{P}}\bigg[\int_\varepsilon^T\int_{\T} \left(\frac{1}{2}|v+GB_{L}|^2-e_l(v)\right)\dif x\dif t\bigg],
$$
$$
I_{\tau,\varepsilon}[v]:= \mathbf E^{\mathbf{P}}\bigg[1_{\{\tau\geq 2\varepsilon\}}\int_{\T} \left(\frac{1}{2}|v+GB_{L}|^2-e_l(v)\right)(\tau,x)\dif x\bigg].
$$
Since
$$\mathbf E^{\mathbf{P}}\left[\sup_{t\in[0,T]}\left|\int_0^t \langle v_n-v,G\dif B_L\rangle\right|^{2}\right]\lesssim \E^{\mathbf{P}}\left[\int_0^T\|G^*(v_n-v)\|_U^2\dif s\right]\rightarrow0$$
by compactness of $G^*$ and the $L^\infty$-bound obtained in \emph{Step 1},
we obtain
$$
e_{l}(v_{n})\to e_{l}(v) \mbox{ in }L^{2}(\Omega;C([0,T]))
$$
whenever $v_{n}\to v$ in $X$.  Due to the continuity of the energy $e_{l}$ with respect to the topology on $X$, the functionals $I_\varepsilon, I_{\tau,\varepsilon}$ are lower semi-continuous on the space $X$.

\emph{Step 3: $I_{\varepsilon}$ vanishes at points of continuity.}
As the next step,  we prove that at each point  of continuity $v$ of $I_\varepsilon$ on $X$ it holds $I_\varepsilon[v]=0$.  The idea of proof comes from \cite[Section 3]{CFK}. We proceed by contradiction: fix $\varepsilon\in (0,T)$ and let $v\in X$ be a point of continuity of $I_{\varepsilon}$ and suppose that $I_{\varepsilon}(v)<0.$ By definition of the space $X$,  there is a sequence $v_m\in X_0$ such that $$D(v_m,v)\rightarrow0, \quad I_\varepsilon[v_m]\rightarrow I_\varepsilon[v], \quad I_\varepsilon[v_m]<-\alpha_0$$
for some $\alpha_0>0$.  Now, we can find an $(\mathcal{F}_{t})_{t\geq0}$-adapted function $e^m\in C([0,T])$ $\mathbf P$-a.s. and $\delta_m>0$ such that $(T-\varepsilon)\delta_m<\alpha_0/2$
$$
e^m< e_{l}(v_m),\ t\in(0,T],\quad e^m(0)=e_{l}(v_m)(0),\quad e^m=e_{l}(v_m)-\delta_m, \ t\in[\varepsilon,T],
$$
and
$$v_m\in X_{0,e^m}.$$
Then due to \emph{Step 1}, $e^m\in C([\varepsilon,T];(0,\infty))$ is of compact range. Moreover, by the definition of the stopping time we know that $GB_L\in C([0,T]\times \T;\R^{3})$ is  of compact range.
Define
$$
I_{\varepsilon,e^m}[v]=\mathbf E^{\mathbf{P}}\bigg[\int_\varepsilon^T\int_{\T} \left(\frac{1}{2}|v+GB_{L}|^2-e^m\right)\dif x\dif t\bigg].
$$
For $v_m$ we can choose $\delta_m$ small enough such that $I_{\varepsilon,e^m}[v_m]<-\alpha_0/2$. Then
 by using \cite[Lemma 5.11]{BFH20} and Remark~\ref{modification} for $(e_m,v_m+GB_L,H_m)$ on $[\varepsilon,T]$  we obtain an oscillatory sequence $w_{m,n}\in C^{\infty}_c((\varepsilon,T)\times \mathbb{T}^3;\R^{3})$. In particular, we can extend these functions to the whole $[0,T]$ by $w_{m,n}(t)=0$ for $t\in[0,\varepsilon]$. By Remark~\ref{modification} we have for $\alpha\in(0,1)$
\begin{align}\label{eq:227}
\esssup_{\omega\in\Omega}\|w_{m,n}\|_{C_{T}^{\alpha}H^{-1}}\rightarrow0 \textrm{ as } n\rightarrow\infty,
\end{align}
and
$$
\liminf_{n\rightarrow\infty}I_{\varepsilon,e^m}[v_m+w_{m,n}]\geq I_{\varepsilon,e^m}[v_m]+c\alpha_0^2=I_\varepsilon[v_m]+c\alpha_0^2+(T-\varepsilon)\delta_{m}.
$$

As the next step, we observe that $ v_m+w_{m,n}\in X_{0,e^m},$
which follows from the fact that $v_m+w_{m,n}$ is of compact range as a function in $C([\varepsilon_0,T]\times\T;\mathbb{R}^{3})$ for any $\varepsilon_0\in(0,T)$ since $w_{m,n}$ is of compact support in $(\varepsilon,T)$.
Moreover, we have
\begin{equation*}
\begin{split}
I_{\varepsilon,e^m}[v_m+w_{m,n}]-I_\varepsilon[v_m+w_{m,n}]&=\mathbf E\left[\int_\varepsilon^T\int_{\T}\left(e_{l}(v_m+w_{m,n})-e_{l}(v_m)+\delta_{m}\right) \dif x\dif t\right]\\
&\rightarrow (T-\varepsilon)\delta_{m} \quad\mbox{as }n\to\infty,
\end{split}
\end{equation*}
which implies that
$$
\liminf_{n\rightarrow\infty}I_{\varepsilon}[v_m+w_{m,n}]\geq I_{\varepsilon}[v_m]+c\alpha_0^2.$$

Now, we prove that $v_m+w_{m,n}\in X_0$. The statements for $t\in[0,\varepsilon)$ are obvious since $v_m+w_{m,n}=v_m$. For $t\in[\varepsilon,T]$ we use the convergence in \eqref{eq:227} to obtain \eqref{enconvergence}. In particular, we deduce
\begin{equation*}
e(v_m+w_{m,n}+GB_{L},H_{m}+V_{m,n})<e^{m}=e_{l}(v_m)-\delta_m\leq e_{l}(v_m+w_{m,n})-\delta_m/2,
\end{equation*}
for $n$ large enough. Here, $V_{m,n}$ satisfies $\partial_tw_{m,n}+\div V_{m,n}=0$, cf.  Remark~\ref{modification}. This way, we found a sequence $\tilde{v}_m=v_m+w_{m,n(m)}\in X_0$ such that
 $$
 D(\tilde{v}_m,v)\rightarrow0,\qquad\liminf_{m\rightarrow\infty}I_\varepsilon[\tilde{v}_m]>I_\varepsilon[v],$$ which contradicts the assumption that $v$ is a point of continuity of $I_\varepsilon$.

\emph{Step 4: $I_{\tau,\varepsilon}$ vanishes at points of continuity.}
Now, we prove that at each point $v$ of continuity of $I_{\tau,\varepsilon}$ on $X$ we have $I_{\tau,\varepsilon}[v]=0$. The proof is similar to \emph{Step 3}. For a contradiction, let us suppose that $v$ is a point of continuity and $I_{\tau,\varepsilon}(v)<0.$ Then there is a sequence $v_m\in X_0$  such that
$$D(v_m,v)\rightarrow0, \quad I_{\tau,\varepsilon}[v_m]\rightarrow I_{\tau,\varepsilon}[v],\quad  I_{\tau,\varepsilon}[v_m]<-\alpha_0$$
for some $\alpha_0>0$.
We can find an adapted function $e^m\in C([0,T];(0,\infty))$ $\mathbf P$-a.s. such that for $\delta_m<\alpha_0/2$
$$
e^m< e_{l}(v_m),\ t\in(0,T],\quad e^m(0)=e_{l}(v_m)(0),\quad e^m=e_{l}(v_m)-\delta_m, \ t\in[\varepsilon,T],$$
and
$v_m\in X_{0,e^m}.$
Define
$$
I_{\tau,\varepsilon,e^m}[v]=\mathbf E^{\mathbf{P}}\bigg[1_{\{\tau\geq2 \varepsilon\}}\int_{\T} \left(\frac{1}{2}|v+GB_{L}|^2-e^m\right)(\tau)\dif x\bigg].
$$
We apply Lemma \ref{lem:oscs} to $w=v_m+GB_{L}$ and obtain a sequence $w_{m,n}\in X_{0,e^m}$ such that for $\alpha\in(0,1)$
$$
v_m+w_{m,n}\in X_{0,e^m},\quad w_{m,n}(t)=0, \ t\in[0,\varepsilon),
$$
\begin{align*}
\esssup_{\omega\in\Omega}\|w_{m,n}\|_{C^{\alpha}_{T}H^{-1}}\rightarrow0  \textrm{ as } n\rightarrow\infty,
\end{align*}
and
\begin{equation*}
\liminf_{n\rightarrow\infty}I_{\tau,\varepsilon,e^m}[v_m+w_{m,n}]\geq I_{\tau,\varepsilon,e^m}[v_m]+c\alpha_0^2=I_{\tau,\varepsilon}[v_m]+c\alpha_0^2+\delta_m\mathbf P(\tau\geq2\varepsilon).
\end{equation*}
Moreover,
\begin{equation*}
\begin{split}
I_{\tau,\varepsilon,e^m}[v_m+w_{m,n}]-I_{\tau,\varepsilon}[v_m+w_{m,n}]&=\mathbf E^{\mathbf{P}}\left[1_{\{\tau\geq 2\varepsilon\}}\int_{\T}( e_{l}(v_m+w_{m,n})(\tau)-e_{l}(v_m)(\tau)+\delta_m )\dif x\right]\\
&\rightarrow \delta_m\mathbf P(\tau\geq2\varepsilon) \quad\mbox{as }n\to\infty,
\end{split}
\end{equation*}
which implies that
$$
\liminf_{n\rightarrow\infty}I_{\tau,\varepsilon}[v_m+w_{m,n}]\geq I_{\tau,\varepsilon}[v_m]+c\alpha_0^2.$$
As in \emph{Step 3}, we obtain  $v_m+w_{m,n}\in X_0$
and therefore, we have a sequence $\tilde{v}_m=v_m+w_{m,n(m)}$ such that
$$
D(\tilde{v}_m,v)\rightarrow0,\qquad\liminf_{n\rightarrow\infty}I_{\tau,\varepsilon}[\tilde{v}_m]>I_{\tau,\varepsilon}(v),$$ which contradicts the assumption that $v$ is a point of continuity of $I_{\tau,\varepsilon}$.

\emph{Step 5: Conclusion.}  According to the oscillatory lemma,  Remark~\ref{modification},  the set of subsolutions $X_{0}$ has an infinite cardinality and consequently the same is valid for $X$.
Moreover, due to lower semicontinuity of the functionals $I_\varepsilon$ and $I_{\tau,\varepsilon}$, it follows that  their points of continuity  form residual sets in $X$. Therefore, the set
$$
\mathcal{C}=\cap_{m\in\mathbb{N}}\left\{v\in X;\,I_{1/m}[v] \textrm{ is continuous}\right\}\cap \left\{v\in X;\,I_{\tau,1/m}[v] \textrm{ is continuous}\right\},
$$
is residual as it is  an intersection of a countable family of residual sets. Therefore, $\mathcal{C}$ has  infinite cardinality. In addition, as in \cite[Lemma~6.2]{BFH20} we obtain that if $I_{1/m}[v]=0$ for all $m\in \mathbb{N}$ then  $v$ solves the truncated system \eqref{eq:v} and $\mathbf P$-a.s. $e_l(v)(t)=\frac{1}{2}\|(v+B_{L})(t)\|_{L^2}^2$ for a.e. $t\in(0,T)$. If $I_{\tau,1/m}[v]=0$ for all $m\in \mathbb{N}$, we have $e_l(v)(\tau)=\frac{1}{2}\|v+B_{L}\|_{L^2}^2(\tau)$ $\mathbf P$-a.s.  Therefore, it follows from \emph{Step 3} and \emph{Step 4} that  there are infinitely many solutions of \eqref{eq:v} satisfying the conditions in the statement of the theorem.
\end{proof}

\begin{remark}
The fact that the energy equality holds at a given stopping time $\tau$, namely, that \eqref{eq:stop} holds in Theorem~\ref{th:con} is actually not necessary for the extension of the convex integration solutions in Section~\ref{s:appl} below. Besides, by a simple modification  the result of Theorem \ref{th:con} can be  strengthened further so that \eqref{eq:stop} holds true for an arbitrary sequence of stopping times.
\end{remark}

\subsection{Extension  of solutions}\label{s:ext}
In this section we present a general approach which permits to extend solutions defined up to a stopping time $\tau$ to the whole time interval $[0,\infty)$. The construction adapts the ideas of \cite[Section~5.2]{HZZ19} to the  Euler system. The arguments of this section apply to the setting of a general multiplicative noise coefficient $G$ satisfying (G1) and (G2). For simplicity, we restrict ourselves to the simplified probabilistically weak solutions as this is what is needed for the application to the convex integration solutions presented in Section~\ref{s:appl} below.

First, we  introduce the notion of dissipative probabilistically weak solution up to a stopping time. We note that unlike in our previous definitions of solution, the filtration here is the right continuous version $({\mathcal{B}}_{SPW,t})_{t\geq 0}$ of the canonical filtration $({\mathcal{B}^{0}_{SPW,t}})_{t\geq 0}$. This is required by the convex integration solutions from Section~\ref{s:ci}, where the corresponding stopping times can only be shown to satisfy the stopping time property   with respect to $({\mathcal{B}}_{SPW,t})_{t\geq 0}$, cf. Section~\ref{s:appl} below. To this end, for a $(\mathcal{B}_{SPW,t})_{t\geq0}$-stopping time $\tau$ we define
$$
\Omega_{SPW,\tau}:=\big\{\omega(\cdot\wedge\tau(\omega));\,\omega\in\Omega_{SPW}\big\}.
$$
Then $\Omega_{SPW,\tau}$ is a Borel subset of $\Omega_{SPW}$.

\bd\label{def:martsol}
Let  $(x_{0},y_0,b_{0})\in L^{2}_{\sigma}\times\mathcal{M}^{+}(\T;\R^{3\times 3}_{\rm{sym}})\times U_{1}$ and let $\tau\geq s$ be a $(\mathcal{B}_{SPW,t})_{t\geq 0}$-stopping time. A~probability measure $P\in \mathscr{P}(\Omega_{SPW,\tau})$ is  a simplified dissipative probabilistically weak solution to the Euler system \eqref{el} on $[s,\tau]$ with the initial value $(x_0,y_0,b_{0}) $ at time $s$ provided

\emph{(M1)} $P(x(t)=x_0, y(t)=y_0,  b(t)=b_{0},0\leq t\leq s)=1$, $P(\mathfrak{N}\in L^{\infty}_{\rm{loc}}([s,\tau];\mathcal{M}^{+}(\T;\mathbb{R}^{3\times 3}_{\rm{sym}})))=1$.

\emph{(M2)} Under $P$, for every $l\in U$, $\langle b(\cdot\wedge\tau),l\rangle_{U}$ is a continuous square integrable $(\mathcal{B}_{SPW,t})_{t\geq s}$-martingale starting from $b_{0}$ at time $s$ with quadratic variation process given by $(\cdot\wedge \tau-s)\|l\|_{U}^{2}$  and for every $e_i\in C^\infty(\mathbb{T}^3)\cap L^2_\sigma$ and  $t\geq s$

$$\langle x(t\wedge\tau)-x(s),e_i\rangle-\int^{t\wedge\tau}_s\int_{\mathbb{T}^3}\nabla e_i:\dif\mathfrak{R}(r) \dif r=\int_s^{t\wedge\tau}\langle e_i,G(x(r))\dif b(r)\rangle.$$

\emph{(M3)} P-a.s. define for every $s \leq t\leq \tau$
\begin{align*}
z(t):=\|x_{0}\|_{L^{2}}^{2}+2\int_s^t\langle x(r), G(x(r))\dif b(r)\rangle+\int_s^t\|G(x(r))\|_{L_2(U,L^2)}^2\dif r,
\end{align*}
then
\begin{align*}
P\left(\int_{\T}\dif\tr\mathfrak{R}(t)\leq z(t) \ \textrm{\emph{for a.e.}}\  s\leq t\leq \tau\right)=1.
\end{align*}
\ed

The following result is a generalization of \cite[Proposition~5.2]{HZZ19}. Here and in the sequel, for a $({\mathcal{B}}_{SPW,t})_{t\geq 0}$-stopping time $\tau$ we denote by ${\mathcal{B}}_{SPW,\tau}$ the associated $\sigma$-algebra.

\begin{proposition}\label{prop:1}
Let $\tau$ be a bounded $({\mathcal{B}}_{SPW,t})_{t\geq0}$-stopping time.
Then for every $\omega\in \Omega:=\Omega_{SPW}\cap \Omega_{SPW,0}$ with $\Omega_{SPW,0}=\{(x,y,b);x\in L_{\rm{loc}}^\infty([0,\infty);L^2)\}$ there exists $Q_{\omega}\in\mathscr{P}({\Omega_{SPW}})$ such that
\begin{equation}
\label{qomega}
Q_\omega\big(\omega'\in{\Omega}_{SPW}; \omega'(t)=\omega(t) \textrm{\emph{ for }} 0\leq t\leq \tau(\omega)\big)=1,
\end{equation}
and
\begin{equation}\label{qomega2}
Q_\omega(A)=R_{\tau(\omega),(x,y,b)(\tau(\omega),\omega)}(A)\qquad\text{for all}\  A\in\mathcal{B}_{SPW}^{\tau(\omega)},
\end{equation}
where $R_{\tau(\omega),(x,y,b)(\tau(\omega),\omega)}\in\mathscr{P}({\Omega}_{SPW})$ is a simplified dissipative probabilistically weak solution to the Euler system \eqref{el} starting at time $\tau(\omega)$ from the initial condition $(x,y,b)(\tau(\omega),\omega)$. Furthermore, for every $B\in\mathcal{B}_{SPW}$ the mapping $\omega\mapsto Q_{\omega}(B)$ is $\mathcal{B}_{SPW,\tau}$-measurable.
\end{proposition}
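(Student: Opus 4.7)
The plan is to follow the strategy of \cite[Proposition~5.2]{HZZ19}, suitably adapted to the present setting where the energy variable is built into the definition via the implicit process $z$ rather than carried as an extra coordinate. The starting point is the existence theorem (Theorem~\ref{existence}) which, together with the stability result (Theorem~\ref{convergence}), shows that for each $(s,x_0,y_0,b_0)$ the set $\mathscr C_{SPW}(s,x_0,y_0,b_0)$ of simplified dissipative probabilistically weak solutions starting from $(x_0,y_0,b_0)$ at time $s$ is non-empty. The restriction to $\omega\in\Omega_{SPW,0}$ guarantees that $x(\tau(\omega),\omega)\in L^2_\sigma$, which together with $y(\tau(\omega),\omega)\in\mathcal{M}^+(\T;\R^{3\times 3}_{\rm{sym}})$ and $b(\tau(\omega),\omega)\in U_1$ provides an admissible initial datum at the (random) time $\tau(\omega)$.

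First, I would establish the measurability of a selection $\omega\mapsto R_{\tau(\omega),(x,y,b)(\tau(\omega),\omega)}$. The natural route is via a measurable selection theorem in the spirit of Kuratowski--Ryll-Nardzewski: by Theorem~\ref{convergence} the multifunction $(s,x_0,y_0,b_0)\mapsto \mathscr C_{SPW}(s,x_0,y_0,b_0)\subset\mathscr P(\Omega_{SPW})$ has closed (even compact, by the tightness contained in the proof of Theorem~\ref{convergence}) values and is upper semicontinuous with respect to weak convergence, so one can extract a Borel measurable selection. Composing with the $\mathcal B_{SPW,\tau}$-measurable map $\omega\mapsto (\tau(\omega),x(\tau(\omega),\omega),y(\tau(\omega),\omega),b(\tau(\omega),\omega))$ (which is measurable because $\tau$ is a $(\mathcal B_{SPW,t})_{t\geq 0}$-stopping time and $x,y,b$ are continuous with values in the appropriate spaces) yields the required measurable family $\omega\mapsto R_{\tau(\omega),(x,y,b)(\tau(\omega),\omega)}$.

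Second, I would construct $Q_\omega$ by pasting: define the concatenation map $\Phi_\omega\colon\Omega_{SPW}\to\Omega_{SPW}$ which rewrites a trajectory $\omega'$ so that it coincides with $\omega$ on $[0,\tau(\omega)]$ and follows $\omega'$ after a time-shift by $\tau(\omega)$, i.e.
\begin{equation*}
(\Phi_\omega\omega')(t)=
\begin{cases}
\omega(t),&0\leq t\leq\tau(\omega),\\
\omega(\tau(\omega))+\omega'(t)-\omega'(\tau(\omega)),& t\geq\tau(\omega),
\end{cases}
\end{equation*}
adapted appropriately to each coordinate (for $x$ and $b$ the difference form is natural; for $y$ the continuity at the joining time is automatic since $y$ is continuous and the $R$-solution starts from $y(\tau(\omega),\omega)$). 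Then $Q_\omega$ is defined as the pushforward of $R_{\tau(\omega),(x,y,b)(\tau(\omega),\omega)}$ under $\Phi_\omega$, which manifestly satisfies \eqref{qomega} and \eqref{qomega2}. Measurability of $\omega\mapsto Q_\omega(B)$ then follows from the measurability of the selection together with the joint measurability of $(\omega,\omega')\mapsto\Phi_\omega\omega'$.

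The main obstacle is two-fold. First, one must check that $\Phi_\omega$ does indeed map into $\Omega_{SPW}$, i.e.\ that the pasted trajectory retains the required Sobolev--H\"older regularity $W^{\alpha,q}_{\rm loc}$ of $y$ and $W^{\alpha/2,q}_{\rm loc}$ of $b$ across the joining time $\tau(\omega)$; this uses the continuity of $y$ and $b$ at $\tau(\omega)$ together with a standard interpolation/localization argument for fractional Sobolev norms on adjacent intervals. Second, and more subtle, is the verification that the defining conditions (M1)--(M3) transfer across the pasting: one needs that $z$ computed under $Q_\omega$ on $[\tau(\omega),\infty)$, starting from the value $\|x(\tau(\omega),\omega)\|_{L^2}^2$ dictated by the $R$-solution, is consistent with (M3), and that the compatibility $\int_\T\dif\tr\mathfrak R(t)\leq z(t)$ together with the positive semidefiniteness of $\mathfrak N$ is preserved. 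These follow from the martingale property of the stochastic integral under $R_{\tau(\omega),\cdot}$ combined with the tower property relating $(\mathcal B_{SPW,t})_{t\geq\tau(\omega)}$ on $\Omega_{SPW}$ to the corresponding filtration under $R_{\tau(\omega),\cdot}$, exactly in the spirit of the argument in \cite[Proposition~5.2]{HZZ19}.
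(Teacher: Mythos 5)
Your proposal follows essentially the same route as the paper: a Borel measurable selection $(s,x_0,y_0,b_0)\mapsto R_{s,x_0,y_0,b_0}$ out of the compact, measurably-varying solution sets furnished by Theorem~\ref{convergence} (the paper uses \cite[Lemma~12.1.8, Theorem~12.1.10]{SV79} where you invoke Kuratowski--Ryll-Nardzewski, but this is the same machinery), composed with the $\mathcal{B}_{SPW,\tau}$-measurable evaluation at $\tau$, followed by concatenation of $\delta_\omega$ with $R_{\tau(\omega),(x,y,b)(\tau(\omega),\omega)}$ at time $\tau(\omega)$ via \cite[Lemma~6.1.1]{SV79}, and a check of $\mathcal{B}_{SPW,\tau}$-measurability of $\omega\mapsto Q_\omega(B)$ on cylinder sets. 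Two small remarks. First, your pasting map in the ``difference form'' $\omega(\tau(\omega))+\omega'(t)-\omega'(\tau(\omega))$ is redundant and, taken literally, misleading for the $x$-coordinate (the Euler dynamics is not invariant under adding a constant to the velocity); it is harmless only because $R_{\tau(\omega),\cdot}$ is already supported on trajectories with $(x,y,b)(\tau(\omega),\omega')=(x,y,b)(\tau(\omega),\omega)$, so the map is a.s.\ the identity after $\tau(\omega)$ and the pushforward coincides with the plain concatenation the paper uses. Second, your final paragraph on transferring (M1)--(M3) across the pasting is not part of this proposition: Proposition~\ref{prop:1} only asserts \eqref{qomega}, \eqref{qomega2} and measurability, while the verification that $P\otimes_\tau R$ is a global solution is deferred to Proposition~\ref{prop:2} and genuinely requires the additional hypothesis \eqref{Q1}, which cannot be dispensed with here. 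A minor technical point the paper is careful about and you gloss over: the canonical $x$ is only $H^{-3}$-valued, so one must replace it by a Borel modification $\tilde x$ taking values in $L^2_\sigma$ before composing with the selection.
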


\begin{proof}
According to the stability with respect to the initial time and the initial condition in  Theorem \ref{convergence}, for every $(s,x_0,y_0,b_0)\in [0,\infty)\times L_{\sigma}^2\times \mathcal{M}^+(\mathbb{T}^3,\mathbb{R}^{3\times 3}_{
\rm{sym}})\times U_1$ the set $\mathscr{C}(s,x_{0},y_0,b_0)$ of all associated simplified dissipative probabilistically weak solutions  is compact with respect to the weak convergence of probability measures. Let $\mathrm{Comp}(\mathscr{P}(\Omega_{SPW}))$ denote the space of all compact subsets of $\mathscr{P}(\Omega_{SPW})$ equipped with the Hausdorff metric. Using the stability from Theorem \ref{convergence} again together with \cite[Lemma 12.1.8]{SV79} we obtain that the map
$$
[0,\infty)\times L^{2}_{\sigma}\times \mathcal{M}^+(\mathbb{T}^3,\mathbb{R}^{3\times 3}_{
\rm{sym}})\times  U_1\to\mathrm{Comp}(\mathscr{P}(\Omega_{SPW})),\qquad (s,x_{0},y_0,b_0)\mapsto \mathscr{C}(s,x_{0},y_0,b_0),$$
is Borel measurable. Accordingly, \cite[Theorem 12.1.10]{SV79} gives the existence of a measurable selection. More precisely, there exists a Borel measurable map
$$
[0,\infty)\times L^{2}_{\sigma}\times \mathcal{M}^+(\mathbb{T}^3,\mathbb{R}^{3\times 3}_{
\rm{sym}})\times U_{1}\to\mathscr{P}(\Omega_{SPW}),\qquad(s,x_{0},y_0,b_{0})\mapsto R_{s,x_{0},y_0,b_{0}},
$$
such that $R_{s,x_{0},y_0,b_0}\in \mathscr{C}(s,x_{0},y_0,b_0)$.

As the next step, we recall that the canonical process $\omega$ on $\Omega_{SPW}$ is continuous in
$$
H^{-3}\times \mathcal{M}^+(\mathbb{T}^3,\mathbb{R}^{3\times 3}_{
\rm{sym}})\times U_1,
$$
hence $(x,y,b):[0,\infty)\times\Omega_{SPW}\rightarrow H^{-3}\times \mathcal{M}^+(\mathbb{T}^3,\mathbb{R}^{3\times 3}_{
\rm{sym}})\times U_1$ is progressively measurable with respect to the canonical filtration $({\mathcal{B}}^{0}_{SPW,t})_{t\geq 0}$ and consequently it is also progressively measurable  with respect to the right continuous filtration $({\mathcal{B}}_{SPW,t})_{t\geq 0}$. Define $\tilde{x}:[0,\infty)\times\Omega_{SPW}\rightarrow L^2_\sigma$ by $\tilde{x}(t,\omega)=x(t,\omega)$ if $x(t,\omega)\in L^2_\sigma$ and $\tilde{x}(t,\omega)=0$  if $x(t,\omega)\in H^{-3}\setminus L^2_\sigma$. Furthermore, $L^2_\sigma\subset H^{-3}$ continuously and densely, by Kuratowski's measurable theorem we know $L^2_\sigma\in \mathcal{B}(H^{-3})$ and $\mathcal{B}(L^2_\sigma)=\mathcal{B}(H^{-3})\cap L^2_\sigma$. For $A\in \mathcal{B}(L^2_\sigma)$ with $0\in A$ then $\tilde{x}|_{[0,t]}^{-1}(A)=x|_{[0,t]}^{-1}(A)\cup x|_{[0,t]}^{-1}(H^{-3}\setminus L^2_\sigma)$. For $A\in \mathcal{B}(L^2_\sigma)$ with $0\notin A$ then $\tilde{x}|_{[0,t]}^{-1}(A)=x|_{[0,t]}^{-1}(A)$. This implies that
$$
(\tilde{x},y,b):[0,\infty)\times \Omega_{SPW}\rightarrow L^2_\sigma\times \mathcal{M}^+(\mathbb{T}^3,\mathbb{R}^{3\times 3}_{
\rm{sym}})\times U_{1}
$$
is progressively measurable  with respect to the right continuous filtration $({\mathcal{B}}_{SPW,t})_{t\geq 0}$.

 In addition, $\tau$ is a stopping time with respect to the same filtration $({\mathcal{B}}_{SPW,t})_{t\geq 0}$. Therefore, it follows from \cite[Lemma~1.2.4]{SV79} that both $\tau$ and $(\tilde{x},y,b)(\tau(\cdot),\cdot)$ are ${\mathcal{B}}_{SPW,\tau}$-measurable. Combining this fact with the measurability of the selection $(s,x_{0},y_0,b_0)\mapsto R_{s,x_{0},y_0,b_0}$ constructed above, we deduce that
\begin{equation}\label{eq:R}
\Omega_{SPW}\to \mathscr{P}(\Omega_{SPW}),\qquad\omega\mapsto R_{\tau(\omega),(\tilde{x},y,b)(\tau(\omega),\omega)}
\end{equation}
is ${\mathcal{B}}_{SPW,\tau}$-measurable as a composition of ${\mathcal{B}}_{SPW,\tau}$-measurable mappings. Recall that for every $\omega\in\Omega_{SPW}$ this mapping gives a simplified probabilistically weak solution starting at the deterministic time $\tau(\omega)$ from the deterministic initial condition $(\tilde{x},y,b)(\tau(\omega),\omega)$. In other words,
$$
R_{\tau(\omega),(\tilde{x},y,b)(\tau(\omega),\omega)}\big(\omega'\in\Omega_{SPW}; (\tilde{x},y,b)(\tau(\omega),\omega')=(\tilde{x},y,b)(\tau(\omega),\omega)\big)=1.
$$

Now, we apply \cite[Lemma 6.1.1]{SV79} and deduce that for every $\omega\in \Omega_{SPW}$ there is a unique probability measure
\begin{equation*}
Q_\omega=
  \delta_\omega\otimes_{\tau(\omega)}R_{\tau(\omega),(\tilde{x},y,b)(\tau(\omega),\omega)}\in\mathscr{P}(\Omega_{SPW}),
\end{equation*}
such that for $\omega\in \Omega$ \eqref{qomega} and \eqref{qomega2} hold.

This permits to concatenate, at the deterministic time $\tau(\omega)$, the Dirac mass $\delta_{\omega}$ with the probabilistically weak solution $R_{\tau(\omega),(\tilde{x},y,b)(\tau(\omega),\omega)}$.

In order to show that the mapping $\omega\mapsto Q_{\omega}(B)$  is ${\mathcal{B}}_{SPW,\tau}$-measurable for every $B\in {\mathcal{B}_{SPW}}$, it is enough to consider sets of the form $A=\{(x,y,b)(t_{1})\in \Gamma_{1},\dots, (x,y,b)(t_{n})\in\Gamma_{n}\}$ where $n\in\mathbb{N}$, $0\leq t_{1}<\cdots< t_{n}$, and $\Gamma_{1},\dots,\Gamma_{n}\in\mathcal{B}(H^{-3}\times \mathcal{M}^+(\mathbb{T}^3,\mathbb{R}^{3\times 3}_{
\rm{sym}})\times U_1)$. Then by the definition of $Q_{\omega}$, we have
\begin{align*}
\begin{aligned}
Q_{\omega}(A)&={\bf 1}_{[0,t_{1})}(\tau(\omega))R_{\tau(\omega),(\tilde{x},y,b)(\tau(\omega),\omega)}(A)\\
&\quad+\sum_{k=1}^{n-1}{\bf 1}_{[t_{k},t_{k+1})}(\tau(\omega)){\bf 1}_{\Gamma_{1}}((x,y,b)(t_{1},\omega))\cdots {\bf 1}_{\Gamma_{k}}((x,y,b)(t_{k},\omega))\\
&\qquad\quad\times R_{\tau(\omega),(\tilde{x},y,b)(\tau(\omega),\omega)}\big((x,y,b)(t_{k+1})\in \Gamma_{k+1},\dots, (x,y,b)(t_{n})\in\Gamma_{n}\big)\\
&\quad+{\bf 1}_{[t_{n},\infty)}(\tau(\omega)){\bf 1}_{\Gamma_{1}}((x,y,b)(t_{1},\omega))\cdots {\bf 1}_{\Gamma_{n}}((x,y,b)(t_{n},\omega)).
\end{aligned}
\end{align*}
Here the right hand side is ${\mathcal{B}}_{SPW,\tau}$-measurable as a consequence of the ${\mathcal{B}}_{SPW,\tau}$-measurability of \eqref{eq:R} and $\tau$. The proof is complete.
\end{proof}
\

We proceed with a counterpart of \cite[Proposition~5.3]{HZZ19}.
\begin{proposition}\label{prop:2}
Let $\tau$ be a bounded $({\mathcal{B}}_{SPW,t})_{t\geq0}$-stopping time and let  $P$ be a simplified dissipative probabilistically weak solution to the Euler system \eqref{el} on $[0,\tau]$ starting at the time $0$ from the initial condition $(x_{0},y_0,b_{0})$. Suppose that there exists a Borel  set $\mathcal{N}\subset{\Omega}_{SPW,\tau}$ such that $P(\mathcal{N})=0$ and for every $\omega\in \mathcal{N}^{c}$ it holds
\begin{equation}\label{Q1}
\aligned
&Q_\omega\big(\omega'\in\Omega_{SPW}; \tau(\omega')=
\tau(\omega)\big)=1.
\endaligned\end{equation}
Then the  probability measure $ P\otimes_{\tau}R\in \mathscr{P}(\Omega_{SPW})$ defined by
\begin{equation*}
P\otimes_{\tau}R(\cdot):=\int_{\Omega_{SPW}}Q_{\omega} (\cdot)\,P(\dif\omega)
\end{equation*}
satisfies $P\otimes_{\tau}R= P$ on ${\Omega}_{SPW,\tau}$ and
 is a simplified dissipative probabilistically weak solution to the Euler system \eqref{el} on $[0,\infty)$ with initial condition $(x_{0},y_0,b_{0})$.
\end{proposition}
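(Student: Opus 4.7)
The plan is to verify directly that $P\otimes_\tau R$ satisfies the three conditions (M1), (M2), (M3) of Definition~\ref{sprobabilistically weak solution} globally on $[0,\infty)$ by decomposing every property into a ``before $\tau$'' part (handled by $P$) and an ``after $\tau$'' part (handled by the kernel $R_{\tau(\omega),(\tilde x,y,b)(\tau(\omega),\omega)}$ produced in Proposition~\ref{prop:1}). First I would observe that, thanks to \eqref{qomega}, for every $\omega\in\Omega_{SPW}$ the measure $Q_\omega$ is supported on paths that coincide with $\omega$ on $[0,\tau(\omega)]$; integrating against $P$ immediately yields $P\otimes_\tau R=P$ on $\mathcal{B}_{SPW,\tau}$. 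In particular (M1) at the initial time is inherited from $P$, and the compatibility conditions on the stress $\mathfrak N$ before $\tau$ are automatic. After $\tau$, condition (M1) for $\mathfrak N$ follows from the fact that $Q_\omega$-a.s.\ for $\omega\in\mathcal N^c$ the process $R_{\tau(\omega),\cdot}$ is itself a simplified dissipative probabilistically weak solution starting at $\tau(\omega)$, combined with \eqref{Q1} which guarantees that the restart time matches $\tau(\omega)$ also under $Q_\omega$.

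Next I would verify (M2) by showing that the process
\[
N^{i}_t:=\langle x(t\wedge\tau')-x(s),e_i\rangle-\int_0^{t}\int_{\T}\nabla e_i:\dif\mathfrak R(r)\,\dif r-\int_0^t\langle e_i,G(x)\dif b\rangle,
\]
(and the analogous quantity for the Wiener process $\langle b(\cdot\wedge \tau'),l\rangle_U$, where $\tau'$ is any bounded $(\mathcal{B}_{SPW,t})$-stopping time) is a martingale under $P\otimes_\tau R$ with respect to $(\mathcal{B}_{SPW,t})_{t\geq 0}$. For this I would use the standard Stroock--Varadhan concatenation criterion: it suffices to check the martingale identity $\mathbf E^{P\otimes_\tau R}[(N^{i}_{t_2}-N^{i}_{t_1})Z]=0$ for $Z$ bounded and $\mathcal{B}_{SPW,t_1}$-measurable. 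Splitting the expectation via
\[
\mathbf E^{P\otimes_\tau R}[\,\cdot\,]=\int \mathbf E^{Q_\omega}[\,\cdot\,]\,P(\dif\omega),
\]
the claim on $\{t_1<t_2\le\tau\}$ follows from $P\otimes_\tau R=P$ on $\mathcal{B}_{SPW,\tau}$ and from $P$ being a solution on $[0,\tau]$; on $\{\tau\le t_1<t_2\}$ it follows from the martingale property of $R_{\tau(\omega),\cdot}$ together with \eqref{Q1}; the mixed range is handled by introducing the auxiliary stopping time $\tau\vee t_1$ and applying the optional sampling theorem to the post-$\tau$ piece. Condition \eqref{Q1} is essential here to ensure that the restart time $\tau(\omega)$ carried by the kernel coincides (a.s.)\ with the first-jump time that $\tau$ sees on the concatenated path.

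Condition (M3) is dealt with in the same spirit: the process $z$ defined by the energy formula in Definition~\ref{def:martsol} is well defined on $[0,\tau]$ under $P$, and for $\omega\in\mathcal N^c$ the post-$\tau$ piece under $Q_\omega$ is built starting from the initial energy $\|x(\tau(\omega),\omega)\|_{L^2}^2$; the key point is to check that the two energy formulae splice continuously at $\tau$ to give the global energy $z$ from Definition~\ref{sprobabilistically weak solution}, which again is just the agreement of the stochastic integrals at $t=\tau$ coming from the martingale identification in (M2) together with \eqref{Q1}. The inequality $\int_{\T}\dif\tr\mathfrak R(t)\le z(t)$ then holds for a.e.\ $t\le\tau$ by $P$ and for a.e.\ $t\ge\tau$ by $Q_\omega$.

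The hard part, as in \cite[Proposition~5.3]{HZZ19}, will be the martingale verification in (M2) in the mixed time range and the matching of the stochastic integrals at the gluing time $\tau$. The condition \eqref{Q1} (i.e.\ that $\tau(\omega')=\tau(\omega)$ holds $Q_\omega$-a.s.\ for $P$-a.e.\ $\omega$) is precisely what makes the optional sampling argument go through; without it, one could not reduce integrals over $[0,t]\cap[0,\tau]$ and $[0,t]\cap[\tau,\infty)$ to quantities measurable with respect to the correct $\sigma$-algebras. With (M1)--(M3) established globally, the proof concludes, and the measurability of $\omega\mapsto Q_\omega(B)$ from Proposition~\ref{prop:1} is what legitimises writing $P\otimes_\tau R$ as a Lebesgue integral against $P$ in the first place.
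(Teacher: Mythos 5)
Your overall architecture is the same as the paper's: split every condition into a pre-$\tau$ part handled by $P$ (via \eqref{qomega} and $P\otimes_\tau R=P$ on $\mathcal{B}_{SPW,\tau}$) and a post-$\tau$ part handled by the kernel, with \eqref{Q1} making the gluing time well defined under $Q_\omega$; the martingale verification for $b$ via the Stroock--Varadhan concatenation/optional sampling argument is exactly what the paper invokes (through \cite[Proposition~5.3]{HZZ19}). However, your treatment of (M3) contains a genuine error. You assert that ``the two energy formulae splice continuously at $\tau$'' because the stochastic integrals agree there. The stochastic integrals do match, but the constant terms do not: the global process from Definition~\ref{sprobabilistically weak solution} reads $z(t)=z(\tau)+(\text{post-}\tau\text{ increments})$ with $z(\tau)=\|x_0\|_{L^2}^2+2\int_0^\tau\langle x,G(x)\dif b\rangle+\int_0^\tau\|G(x)\|^2_{L_2(U,L^2)}\dif r$, whereas the restarted solution $R_{\tau(\omega),\cdot}$ carries the energy $\|x(\tau(\omega),\omega)\|_{L^2}^2+(\text{post-}\tau\text{ increments})$. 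These coincide only if $z(\tau)=\|x(\tau)\|_{L^2}^2$, i.e.\ if there is no energy defect at time $\tau$ --- which is precisely what fails for dissipative solutions and is the reason $z$ was introduced as a separate variable in the first place. Claiming continuity of the splice implicitly assumes away the energy sinks.

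The conclusion can still be reached, but only through a one-sided comparison that your argument omits: one needs $\|x(\tau)\|_{L^2}^2\leq z(\tau)$ at the \emph{random} time $\tau$, so that the post-$\tau$ inequality $\int_{\T}\dif\tr\mathfrak{R}(t)\leq \|x(\tau)\|_{L^2}^2+(\text{increments})$ furnished by $Q_\omega$ implies the desired $\int_{\T}\dif\tr\mathfrak{R}(t)\leq z(t)$ for the (possibly larger) global $z$. The a.e.-in-$t$ inequality from (M3) on $[0,\tau]$ does not directly evaluate at $\tau$; the paper upgrades it using lower semicontinuity of $t\mapsto\|x(t)\|_{L^2}^2$ (from continuity of $x$ in $H^{-3}$) together with continuity of $z$, cf.\ \eqref{tau}. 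This step, and the resulting inequality rather than equality between the spliced energies, is the missing ingredient in your (M3). A minor additional point: the kernel $Q_\omega$ from Proposition~\ref{prop:1} has the stated properties only for $\omega\in\Omega=\Omega_{SPW}\cap\Omega_{SPW,0}$, so one should also record that $P(\Omega)=1$ before integrating.
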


\begin{proof}
First, by using similar argument as in the proof of Lemma \ref{lem:def} and the fact that $\Omega$ is a Borel subset of ${\Omega}_{SPW}$, we know that $P(\Omega)=1$.
We observe that due to \eqref{Q1} and \eqref{qomega} and $P({\Omega}_{SPW,0})=1$, it holds $P\otimes_{\tau}R(A)=P(A)$ for every Borel set $A\subset{\Omega}_{SPW,\tau}$. It remains to verify that $P\otimes_{\tau}R$ satisfies (M1), (M2) and {(M3)} in Definition \ref{sprobabilistically weak solution} with $s=0$.
The first condition in (M1)  follows easily since by construction
$
P\otimes_{\tau}R(x(0)=x_{0},y(0)=y_{0},b(0)=b_{0})=P(x(0)=x_{0},y(0)=y_{0},b(0)=b_{0})=1.
$
Similarly, by definition we have
\begin{align*}
&P\otimes_{\tau}R\left(\mathfrak{N}\in L_{\rm{loc}}^\infty([0,\infty);\mathcal{M}^+(\mathbb{T}^3,\R^{3\times 3}_{\rm{sym}}))\right)
\\
&=\int_{\Omega_{SPW}}1_{\{\mathfrak{N}\in L_{\rm{loc}}^\infty([0,\tau(\omega)];\mathcal{M}^+(\mathbb{T}^3,\R^{3\times 3}_{\rm{sym}}))\}}Q_\omega\left(\mathfrak{N}\in L_{\rm{loc}}^\infty([\tau(\omega),\infty);\mathcal{M}^+(\mathbb{T}^3,\R^{3\times 3}_{\rm{sym}}))\right)\dif P(\omega)=1.
\end{align*}

Now we shall verify (M2). First, we recall that since $P$ is a probabilistically weak solution on $[0,\tau]$, for $l\in U$ the process
 $\langle b(\cdot\wedge\tau),l\rangle_U$ is a   continuous square integrable $({\mathcal{B}}_{SPW,t})_{t\geq 0}$-martingale under $P$ with  the quadratic variation process
given by
$\|l\|_U^2(\cdot\wedge\tau).$
On the other hand, since for every $\omega\in \Omega$, the probability measure $R_{\tau(\omega),(x,y,b)(\tau(\omega),\omega)}$ is a weak solution  starting at the time $\tau(\omega)$ from the  initial condition $(x,y,b)(\tau(\omega),\omega)$,  the process $\langle b(\cdot)-b(\cdot\wedge \tau(\omega)),l\rangle_U$ is a continuous square integrable $({\mathcal{B}}_{SPW,t})_{t\geq \tau(\omega)}$-martingale under $R_{\tau(\omega),(x,y,b)(\tau(\omega),\omega)}$ with  the quadratic variation process
given by
$(t-\tau(\omega))\|l\|_U^2$, $t\geq\tau(\omega)$. Also $P(\Omega)=1$. Then by the same arguments as in the proof of \cite[Proposition~5.3]{HZZ19} we deduce
that under $P\otimes_\tau R$, $\langle b,l\rangle_{U}$ is a  continuous square integrable $({\mathcal{B}}_{SPW,t})_{t\geq 0}$-martingale  with  the quadratic variation process
given by
$t\|l\|_U^2$, $t\geq0$, which implies that $b$ is cylindrical $({\mathcal{B}}_{SPW,t})_{t\geq 0}$-Wiener process on $U $. Since $b$ is adapted to $({\mathcal{B}}^0_{SPW,t})_{t\geq 0}$, $b$ is cylindrical $({\mathcal{B}}^0_{SPW,t})_{t\geq 0}$-Wiener process on $U $.

Furthermore, we have under $P$ for every $e_i\in C^\infty(\mathbb{T}^3)\cap L^2_\sigma$, and for $t\geq 0$ 
$$
\langle x(t\wedge \tau)-x(0),e_i\rangle-\int^{t\wedge \tau}_0\langle \mathfrak{R},\nabla e_i\rangle \dif r=\int_0^{t\wedge\tau} \langle e_i, G(x(r))  \dif b(r)\rangle.
$$
For $\omega\in \Omega$, under $R_{\tau(\omega),(x,y,b)(\tau(\omega),\omega)}$ it holds for $t\geq \tau(\omega)$
$$
\langle x(t)-x(\tau(\omega)),e_i\rangle-\int^{t}_{\tau(\omega)}\langle\mathfrak{R},\nabla e_i\rangle \dif  r=\int_{\tau(\omega)}^{t} \langle e_i, G(x(r))  \dif b(r)\rangle.
$$
Hence, we have
\begin{align*}
&P\otimes_\tau R\bigg\{\langle x(t)-x(0),e_i\rangle-\int^{t}_0\langle\mathfrak{R},\nabla e_i\rangle \dif r=\int^{t}_0 \langle e_i, G(x(r))  \dif b(r)\rangle, e_i\in C^\infty(\mathbb{T}^3)\cap L^2_\sigma,t\geq0\bigg\}
\\
&=
\int_{\Omega} \dif P(\omega)Q_\omega\bigg\{\langle x(t)-x(t\wedge \tau(\omega)),e_i\rangle-\int^{t}_{t\wedge\tau(\omega)}\langle\mathfrak{R},\nabla e_i\rangle \dif r=\int^{t}_{t\wedge\tau(\omega)}\langle e_i, G(x(r))  \dif b(r)\rangle,\\
&
\langle x(t\wedge \tau(\omega))-x(0),e_i\rangle-\int^{t\wedge\tau(\omega)}_0\langle\mathfrak{R},\nabla e_i\rangle \dif r=\int^{t\wedge\tau(\omega)}_0\langle e_i, G(x(r))  db(r)\rangle, e_i\in C^\infty(\mathbb{T}^3)\cap L^2_\sigma,t\geq0\bigg\}
\\&=\int \dif P(\omega)Q_\omega\bigg\{\langle x(t\wedge \tau(\omega))-x(0),e_i\rangle-\int^{t\wedge \tau(\omega)}_0\langle\mathfrak{R},\nabla e_i\rangle \dif r\\
&\quad\quad\quad=\int^{t\wedge\tau(\omega)}_0 \langle e_i, G(x(r))  \dif b(r)\rangle, e_i\in C^\infty(\mathbb{T}^3)\cap L^2_\sigma,t\geq0\bigg\}.
\end{align*}
By using (\ref{qomega}) we have
\begin{align*}
&\int \dif P(\omega)Q_\omega\bigg\{\langle x(t\wedge \tau(\omega))-x(0),e_i\rangle-\int^{t\wedge \tau(\omega)}_0\langle\mathfrak{R},\nabla e_i\rangle \dif r\\
&\quad\quad\quad=\int^{t\wedge\tau(\omega)}_0 \langle e_i, G(x(r))  \dif b(r)\rangle, e_i\in C^\infty(\mathbb{T}^3)\cap L^2_\sigma,t\geq0\bigg\}
\\&=P\bigg\{\langle x(t\wedge \tau)-x(0),e_i\rangle-\int^{t\wedge \tau}_0\langle\mathfrak{R},\nabla e_i\rangle \dif r\\
&\quad\quad\quad=\int^{t\wedge\tau}_0 \langle e_i, G(x(r))  \dif b(r)\rangle, e_i\in C^\infty(\mathbb{T}^3)\cap L^2_\sigma,t\geq0\bigg\}=1.
\end{align*}
Then  (M2) follows.

Finally, we prove (M3). Define  $P$-a.s.
$$z(t):=\|x(0)\|_{L^2}^2+2M^E_{t,0}+\int_0^{t} \|G(x(r))\|_{L_2(U,L^2_\sigma)}^2\dif r$$ with $M^E_{t,0}=\int_0^t\langle x, G(x(r))\dif b(r)\rangle.$
It holds  that $\|x(t)\|_{L^2}^2\leq z(t)$ for a.e. $t\in[0,\tau]$  $P$-a.s.
which by lower semi-continuity implies
\begin{equation}\label{tau}
\|x( \tau)\|_{L^2}^2\leq z(\tau)=\|x(0)\|_{L^2}^2+2M^E_{\tau,0}+\int_0^{\tau}\|G(x(r))\|_{L_2(U,L^2_\sigma)}^2\dif r.
\end{equation}
Thus we have
\begin{align*}
&\int \dif P(\omega)Q_\omega\bigg\{\int_{\mathbb{T}^3}\dif\tr\mathfrak{R}(t)  \leq z(t) \  \mbox{a.e.}\  t\geq 0 \bigg\}
\\
&=\int \dif P(\omega)Q_\omega\bigg\{\int_{\mathbb{T}^3}\dif
\tr\mathfrak{R}(t)  \leq z(t) \ \mbox{a.e.}\  t\in[0,\tau(\omega)],
 \,\int_{\mathbb{T}^3}\dif\tr\mathfrak{R}(t)  \leq z(t) \ \mbox{a.e.}\  t\geq\tau(\omega)\bigg\}
 \\
 &=\int \dif P(\omega) 1_{\{\int_{\mathbb{T}^3}\dif\tr\mathfrak{R}(t)  \leq z(t) \ \mbox{a.e.}\  t\in[0,\tau]\}}
 Q_\omega\bigg\{\int_{\mathbb{T}^3}\dif\tr\mathfrak{R}(t)\leq  z(t) \ \mbox{a.e.}\  t\geq\tau(\omega)\bigg\}\\
 &=\int \dif P(\omega) 1_{\{\int_{\mathbb{T}^3}\dif\tr\mathfrak{R}(t)  \leq z(t) \ \mbox{a.e.}\  t\in[0,\tau]\}}
 Q_\omega\bigg\{\int_{\mathbb{T}^3}\dif\tr\mathfrak{R}(t)\leq z(t)-z(\tau(\omega)) + z(\tau(\omega))   \ \mbox{a.e.}\  t\geq\tau(\omega)\bigg\}\\
&\geq \int \dif P(\omega) 1_{\{\int_{\mathbb{T}^3}\dif\tr\mathfrak{R}(t)  \leq z(t) \ \mbox{a.e.}\  t\in[0,\tau]\}}
 \\&\qquad\qquad Q_\omega\bigg\{\int_{\mathbb{T}^3}\dif\tr\mathfrak{R}(t)\leq z(t)-z(\tau(\omega)) + \|x(\tau(\omega),\omega)\|_{L^{2}}^{2}   \ \mbox{a.e.}\  t\geq\tau(\omega)\bigg\}=1,
\end{align*}
where in the last second step we used \eqref{qomega} and \eqref{tau} to deduce for $P$-a.s. $\omega$ $$Q_\omega(\omega': \|x(\tau(\omega),\omega)\|_{L^2}^2\leq z(\tau(\omega),\omega)=z(\tau(\omega),\omega'))=1$$  and in the last step we used \eqref{Q1} and \eqref{tau} together with the fact that the corresponding process $z$ for a solution starting from the initial time $\tau(\omega)$ from the initial condition $(x,y,b)(\tau(\omega),\omega)$ as in \eqref{qomega2} is
$$
z(t)-z(\tau(\omega))+\|x(\tau(\omega),\omega)\|_{L^{2}}^{2}.
$$
\end{proof}

\subsection{Application to the convex integration solutions}\label{s:appl}

As in Section~\ref{s:ci}, we restrict ourselves to the additive noise case satisfying the assumption (G3). Our goal is then to  apply the construction from Section~\ref{s:ext} to the convex integration solutions obtained in Theorem~\ref{th:con}, in order to conclude the non-uniqueness in law. Moreover, we assume (G4) on $G$.

In this subsection we fix the parameters $l\in[2,\infty]$, $L>1$. As the first step, it is necessary to define a process on the path space $\Omega_{SPW}$, that is, a function of the variables $(x,y,b)$ defined without the usage of any probability measure,  such that  under the law of the convex integration solution $u=u_{l}=v_{l}+GB_{L}$  from Section~\ref{s:ci} it becomes the stochastic integral $M^{E}$ from the energy equality in \eqref{eq:energy1}.
In particular, since it holds
$$
M^{E}_{t,0}=\int_{0}^{t}\langle u,G\dif B_{L}\rangle =\int_{0}^{t}\langle v,G\dif B_{L}\rangle +\int_{0}^{t}\langle GB_{L},G\dif B_{L}\rangle,
$$
where $v$ solves the transformed system \eqref{eq:v}, we see that the only part requiring probability is the last term, namely the iterated integral of $GB_{L}$. Indeed, the first term on the right hand side can be defined without any probability as a Young integral due to the fact that $v$ necessarily has better time regularity than $u$.
This motivates the following definition: for every $\omega=(x,y,b)\in \Omega_{SPW}$ we let
\begin{equation}\label{eq:M1}
\begin{aligned}
\bar{M}_{t,0}^{(x,y,b)}&:=\frac{1}{2}\| x(t)\|_{L^2}^2-\frac{1}{2}\|x({0})\|_{L^2}^2-\left(\frac{1}{2}-\frac{1}{l}\right)t\|G\|_{L_{2}(U,L^{2})}^2\\
&\qquad-\int_0^t \langle x({0})-\mathbb{P} \div (y (s)-y(0)) ,G\dif b(s)\rangle,
\end{aligned}
\end{equation}
where in view of \eqref{eq:v} and (G4) and discussions in Section 2.3.2 the last integral is well-defined   Young integral, cf. Lemma~\ref{lem:young}. In other words, we will show below that under the law of the corresponding convex integration solution the process $\bar{M}$ is a.s. the  iterated integral of $GB_{L}$.

 For $n\in\mathbb{N},L>0$ and for  $\delta\in(0,1/12)$ to be determined below   we define

 \begin{equation*}
 \aligned\tau_L^{n,1}&=\inf\left\{t\geq 0, \|Gb(t)\|_{H^{\frac{3+\sigma}{2}}}>L-\frac1n\right\}\wedge \inf\left\{t\geq 0,\|Gb\|_{C_t^{\frac{1}{2}-2\delta}H^{1}}>L-\frac1n\right\}\wedge L,
 \endaligned
 \end{equation*}
 \begin{equation*}
 \aligned\tau_L^{n,2}&= \inf\left\{t\geq 0,\|\bar{M}\|_{W_t^{\beta,p}}>L-\frac1n\right\}\wedge L,
\qquad
\tau_L^n=\tau_L^{n,1}\wedge\tau_L^{n,2}.
 \endaligned
 \end{equation*}
We observe that the sequence $(\tau^{n}_{L})_{n\in\mathbb{N}}$ is nondecreasing and define
\begin{equation}\label{eq:tauL}
\tau_L:=\lim_{n\rightarrow\infty}\tau_L^n.
\end{equation}
Note that without an additional regularity of the trajectory $\omega$, it holds true that $\tau^{n}_{L}(\omega)=0$.
By  \cite[Lemma~3.5]{HZZ19} we obtain that $\tau_L^{n,1}$ is $({\mathcal{B}}_{SPW,t})_{t\geq0}$-stopping time.
Here $\omega\mapsto \|\bar M\|_{W_t^{\beta,p}}$ is $(\mathcal{B}^0_{SPW,t})_{t\geq 0}$-adapted since the fact that $(x,y,b)$ is progressively measurable with respect to $({\mathcal{B}}^0_{SPW,t})_{t\geq0}$ implies that $\bar M$ is progressively measurable with respect to $({\mathcal{B}}^0_{SPW,t})_{t\geq0}$. Since $t\mapsto \|\bar{M}\|_{W_t^{\beta,p}}$ is increasing, it holds
for
$$
\tau=\inf\left\{t\geq 0,\|\bar{M}\|_{W_t^{\beta,p}}>L\right\}
$$
that
\begin{align*}
\{\tau\geq t\}=\cap_{m=1}^\infty \left\{\|\bar{M}\|_{W_{t-\frac1m}^{\beta,p}}\leq L\right\}\in \mathcal{B}_{SPW,t}^0.
\end{align*}
Indeed, it is obvious that the right hand side is contained in the left hand side. In addition, the set $\{\tau>t\}$ is also contained in the right hand side. For $\{\tau=t\}$ we also have
for every $m\geq 1$
$$
\|\bar{M}\|_{W_{t-\frac1m}^{\beta,p}}\leq L.$$
Therefore,  $\tau_L^{n,2}$ is $(\mathcal{B}_{SPW,t})_{t\geq0}$-stopping time.
Thus also  $\tau_L$ is a $(\mathcal{B}_{SPW,t})_{t\geq 0}$-stopping time as an increasing limit of stopping times with respect to a right continuous filtration.

Now, we fix   a $GG^{*}$-Wiener process $B$ defined on a probability space $(\Omega, \mathcal{F},\mathbf{P})$ and we denote by  $(\mathcal{F}_{t})_{t\geq0}$ its normal filtration, i.e. the canonical filtration of $B$ augmented by all the $\mathbf{P}$-negligible sets. We recall that this filtration is right continuous. On this stochastic basis, for fixed  parameters $l\in[2,\infty]$, $L>1$, apply Theorem~\ref{th:con} and denote by $u=u_{l}=GB_{L}+v_{l}$ the corresponding solution to the Euler system \eqref{el} on $[0,T_{L}]$, where the stopping time $T_{L}$ was defined in \eqref{stopping time}. We recall that $u$ is adapted with respect to $(\mathcal{F}_{t})_{t\geq0}$ which is  essential to show the martingale property in (M2) in Proposition~\ref{prop:ext} below. We denote by $P$ the law of $(u,\int_0^\cdot u\otimes u \,\dif s, B)$ and prove the following result.

\begin{proposition}\label{prop:ext}
The probability measure $P$ is a simplified dissipative probabilistically  weak solution to the Euler system \eqref{el} on $[0,\tau_{L}]$ in the sense of Definition \ref{def:martsol}, where $\tau_{L}$ was defined in \eqref{eq:tauL}.
\end{proposition}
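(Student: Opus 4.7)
The proof has two main ingredients: the identification, under $P$, of the path-space process $\bar{M}$ from \eqref{eq:M1} with the iterated It\^o integral $\int_0^\cdot \langle GB, G\,\dif B\rangle$, and the verification of (M1)--(M3) on $[0, \tau_L]$ using that $u$ solves the Euler system on $[0, T_L]$. This identification is what relates the analytically-defined $\tau_L$ on the path space to the probabilistically-defined $T_L$ coming from the convex integration.

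The first step is to identify $\bar M$ under $P$. Since $u = v_l + GB_L$ with $v_l$ solving \eqref{eq:v} and $v_l(0) = u(0)$ (because $B(0) = 0$), an integration in time together with the Leray projection yields
\begin{equation*}
v_l(s) = u(0) - \mathbb{P}\div \int_0^s u\otimes u\,\dif r \quad \mbox{for } s \leq T_L,
\end{equation*}
which matches $x(0) - \mathbb{P}\div(y(s) - y(0))$ pointwise under $P$. Plugging this into \eqref{eq:M1} and invoking the energy equality \eqref{eq:energy1} from Theorem~\ref{th:con} gives
\begin{equation*}
\bar{M}_{t,0} = M^E_{t,0} - \int_0^t \langle v_l, G\,\dif B\rangle = \int_0^t \langle GB, G\,\dif B\rangle \quad \mathbf{P}\mbox{-a.s., for } t \leq T_L,
\end{equation*}
where we used $B_L = B$ on $[0, T_L]$ and decomposed $u = v_l + GB$. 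Consequently, on $[0, T_L]$ the path-space norm $\|\bar M\|_{W^{\beta,p}_t}$ agrees pathwise with $\|\int_0^\cdot \langle GB, G\,\dif B\rangle\|_{W^{\beta,p}_t}$, which is bounded by $L$ by definition of $T^2_L$ in \eqref{stopping time}. An analogous comparison of the $H^{(3+\sigma)/2}$ and $C^{1/2-2\delta}_t H^1$ norms defining $\tau^{n,1}_L$ with those defining $T^1_L$, followed by taking $n\to\infty$ in \eqref{eq:tauL}, yields $\tau_L \leq T_L$ $P$-a.s., so that $[0, \tau_L] \subseteq [0, T_L]$ and the conclusions of Theorem~\ref{th:con} hold throughout $[0, \tau_L]$.

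To land properly in $\mathscr{P}(\Omega_{SPW, \tau_L})$, I would consider the law of the stopped process $(u(\cdot\wedge\tilde\tau_L), y(\cdot\wedge\tilde\tau_L), B(\cdot\wedge\tilde\tau_L))$, where $\tilde\tau_L$ is the $(\mathcal{F}_t)$-stopping time obtained by pulling $\tau_L$ back via the adapted measurable map $\omega\mapsto (u(\omega), y(\omega), B(\omega))$. With this interpretation, (M1) is immediate: the initial values $(u(0),0,0)$ are deterministic, and since $\mathfrak{R}(t) = u(t)\otimes u(t)$ gives $\mathfrak{N} = 0$, the positivity and $L^\infty_{\rm loc}$ conditions are trivial. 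For (M2), the Wiener and martingale properties of $b(\cdot\wedge\tau_L)$ under the canonical filtration $(\mathcal{B}_{SPW, t})_{t\geq 0}$ transfer from the corresponding properties of $B(\cdot\wedge\tilde\tau_L)$ under $(\mathcal{F}_t)_{t\geq 0}$ via optional stopping and the $(\mathcal{F}_t)$-adaptedness of $u$ and $y$; the weak formulation is a direct translation of the fact that $u$ solves \eqref{el} on $[0, T_L]$. For (M3), the candidate energy becomes
\begin{equation*}
z(t) = \|u(0)\|^2_{L^2} + 2\int_0^t \langle u, G\,\dif B\rangle + t\|G\|^2_{L_2(U,L^2)},
\end{equation*}
and subtracting \eqref{eq:energy1} yields $z(t) - \|u(t)\|^2_{L^2} = \tfrac{2}{l}\,t\,\|G\|^2_{L_2(U,L^2)} \geq 0$ for $t \leq T_L$, hence $\int_{\T} \dif\tr\mathfrak{R}(t) = \|u(t)\|^2_{L^2} \leq z(t)$ a.e.\ on $[0, \tau_L]$.

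The main obstacle is the first step. The process $\bar M$ in \eqref{eq:M1} is defined purely by analytic (Young) operations on the path space, whereas the quadratic stochastic integral $\int_0^\cdot\langle GB, G\,\dif B\rangle$ cannot be recovered by such means from the trajectory of $B$ alone. The identification is made possible only because the convex integration in Theorem~\ref{th:con} produces solutions satisfying the energy equality \emph{exactly}, rather than just the usual inequality. Without this exact equality, the matching of $\tau_L$ with $T_L$, and consequently the coordination between the analytic and probabilistic frameworks required to extend the convex integration solutions beyond $\tau_L$ via Proposition~\ref{prop:2}, would fail.
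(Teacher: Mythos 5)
Your proposal follows essentially the same route as the paper: identify $\bar M$ under $P$ with the iterated integral $\int_0^\cdot\langle GB,G\,\dif B\rangle$ by combining the transformed equation \eqref{eq:v} (which expresses $v_l$ as an analytic functional of $(x,y)$) with the exact energy equality \eqref{eq:energy1}, then read off (M1)--(M3) from the fact that $u$ is an analytically weak, adapted solution with $\mathfrak N\equiv 0$ and $z(t)-\|u(t)\|_{L^2}^2=\tfrac{2}{l}t\|G\|_{L_2(U,L^2)}^2\geq 0$. Your closing remark about why the exact energy equality is indispensable for defining the stopping time on the path space is precisely the point the paper emphasizes.

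The one place you are weaker than the paper is the relation between $\tau_L$ and $T_L$: you establish only $\tau_L\leq T_L$ and then redefine $P$ as the law of the process stopped at the pullback $\tilde\tau_L$, whereas the paper proves the full identity $\tau_L(u,B)=T_L$ $\mathbf P$-a.s.\ (by a continuity/contradiction argument: at $T_L$ one of the defining norms must have reached $L$, so $\tau_L<T_L$ on a set of positive measure would contradict the definition of $T_L$). This equality is not cosmetic. First, the $P$ fixed before the proposition is the law of the trajectory stopped at $T_L$; for that measure to be supported on $\Omega_{SPW,\tau_L}=\{\omega(\cdot\wedge\tau_L(\omega))\}$ the path must be constant after $\tau_L$, which fails if $\tau_L<T_L$ with positive probability --- so with only the inequality you are proving the statement for a possibly different measure. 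Second, the identity $\tau_L=T_L$ (together with \eqref{eq1}, \eqref{eq2}) is reused in Proposition~\ref{prp:ext2} to verify \eqref{Q1}. The missing step is short --- the increasing limit in \eqref{eq:tauL} together with continuity of $t\mapsto\|Gb\|_{C_t^{1/2-2\delta}H^1}$, $t\mapsto\|\bar M\|_{W_t^{\beta,p}}$ forces the limit to hit the level $L$ exactly at $T_L$ --- but you should carry it out rather than settle for the inequality.
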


\begin{proof}
Recall that the stopping time $T_{L}$ was defined in \eqref{stopping time} in terms of the process $B$. Theorem~\ref{th:con} yields the existence of a solution $u=v_{l}+GB_{L}$ to the Euler system \eqref{el} on $[0,T_{L}]$  such that
$$
\left(u,\int_0^\cdot u\otimes u\,\dif s, B\right)(\cdot\wedge T_{L})\in \Omega_{SPW}\quad \mathbf{P}\mbox{-a.s.}
$$
 In the following,  we  write $(u,B)$  for notational simplicity to denote $(u,\int_0^\cdot u\otimes u\,\dif s, B)$.

 We will now prove that
 \begin{equation}\label{eq}
 \tau_L\left(u, B\right)=T_L \quad \mathbf{P}\text{-a.s}.
 \end{equation}
To this end, we observe that due to the definition of $\bar{M}$ in \eqref{eq:M1} together with the fact that $u$ solves the Euler system \eqref{el} on $[0,T_{L}]$ and $\frac{1}{2}\|u\|^2_{L^2}=e_l(u-GB_L)$ for a.e. $t\in [0,T_L]$, we have $\mathbf{P}$-a.s.
  \begin{equation}\label{eq1}
  b^{(u, B)}(t)=B(t) \quad\textrm{ for } t\in[0,T_L],
  \end{equation}
and
   \begin{equation}
\label{eq2}
  \bar{M}^{(u,B)}(t)=\int_0^t\langle G B(s),G\dif B(s)\rangle \quad\textrm{ for a.e. } t\in[0,T_L].
  \end{equation}
Since $GB\in CH^{\frac{3+\sigma}{2}}\cap C^{\frac{1}{2}-\delta}_{\mathrm{loc}}H^{1}$ $\mathbf{P}$-a.s. and $\int_0^\cdot\langle G B(s),G\dif B(s)\rangle \in C^{\frac{1}{2}-\delta}_{\mathrm{loc}}$ $\mathbf{P}$-a.s., the trajectories of the processes
$$
t\mapsto \|GB(t)\|_{H^{\frac{3+\sigma}{2}}}\quad\text{and}\quad t\mapsto \|GB\|_{C^{\frac12-2\delta}_{t}H^{1}}\quad\text{and}\quad t\mapsto \left\|\int_0^\cdot\langle G B(s),G\dif B(s)\rangle\right\|_{W_{t}^{\beta,p}} $$
are  $\mathbf{P}$-a.s. continuous.
 It follows from the definition of $T_L$ that one of the following four statements holds $\mathbf{P}$-a.s.:
$$
\text{either }\  T_L=L\ \text{ or }\  \|GB(T_L)\|_{H^{\frac{3+\sigma}{2}}}\geq L \ \text{ or }\  \|GB\|_{C_{T_L}^{\frac{1}{2}-2\delta}H^{1}}\geq L,$$
$$
\text{ or }\  \left\|\int_0^{\cdot}\langle G B(s),G\dif B(s)\rangle\right\|_{W_{T_L}^{\beta,p}}\geq L.
$$
Therefore, as a consequence of \eqref{eq1} and \eqref{eq2}, we deduce that $\tau_L(u,B)\leq T_L$ $\mathbf{P}$-a.s. Suppose now  that $\tau_L(u, B)<T_L$ holds true on a set of positive probability $\mathbf{P}$. Then it holds on this set that
$$
\|GB(\tau_L)\|_{H^{\frac{3+\sigma}{2}}}=\|Gb(\tau_L)\|_{H^{\frac{3+\sigma}{2}}}\geq L\ \text{ or }\  \|Gb\|_{C_{\tau_L}^{\frac{1}{2}-2\delta}H^{1}}=\|GB\|_{C_{\tau_L}^{\frac{1}{2}-2\delta}H^{1}}\geq L,
$$
$$
 \text{ or }\ \|\bar{M}\|_{{W_{\tau_L}^{\beta,p}}}= \left\|\int_0^{\cdot}\langle G B(s),G\dif B(s)\right\|_{{W_{\tau_L}^{\beta,p}}}\geq L.
$$
which however contradicts the definition of $T_L$. Hence we have proved  (\ref{eq}).

Recall that $\tau_{L}$ is a $(\mathcal{B}_{SPW,t})_{t\geq 0}$-stopping time. We intend to show that  $P$ is a simplified dissipative probabilistically  weak solution to the Euler system \eqref{el} on $[0,\tau_{L}]$ in the sense of Definition \ref{def:martsol}. First, we observe that it can be seen from the construction in Theorem \ref{th:con} that the initial value $u(0)=\bar{v}(0)+GB(0)=\bar{v}(0)$ is indeed deterministic. Moreover, the convex integration solutions are analytically weak, that is, $\mathfrak{N}\equiv 0$ $P$-a.s.
Hence the condition (M1) follows.
Since $(u,B)$ satisfies the  Euler equations before $T_L$, the equation in (M2) follows from \eqref{eq1} and \eqref{eq}. By using adaptedness of $u$ and a similar argument as in \cite[Proposition 3.7]{HZZ19} we obtain $\langle b(\cdot\wedge\tau_{L}),l\rangle_{U}$ is a continuous square integrable $(\mathcal{B}_{SPW,t})_{t\geq s}$-martingale starting from $b_{0}$ at time $s$ with quadratic variation process given by $(\cdot\wedge \tau_{L}-s)\|l\|_{U}^{2}$.
In order to verify (M3), we recall  $\int_{\mathbb{T}^3}\dif\tr\mathfrak{R}(t)=\|x(t)\|_{L^2}^2$ for $t\in[0,\tau_L]$ $P$-a.s. and that by Theorem \ref{th:con} we have for
$$
Z(t):=\|u(0)\|_{L^2}^2+2\int_0^t\langle u, G\dif B\rangle+t\|G\|_{L_2(U,L^{2})}^2
$$
that
\begin{align*}
\mathbf{P}\left(\|u(t)\|_{L^2}^2\leq Z(t)\ \mbox{for a.e.}\  t\in[0,T_L]\right)=1.
\end{align*}
 Thus (M3) follows from \eqref{eq1} and \eqref{eq}.
\end{proof}

At this point, we are already able to deduce that simplified dissipative probabilistically weak solutions on $[0,\tau_{L}]$ in the sense of Definition \ref{def:martsol} are not unique. However, we aim at a stronger result, namely that globally defined simplified dissipative probabilistically  weak solutions on $[0,\infty)$ in the sense of Definition \ref{sprobabilistically weak solution} are not unique. Moreover, we are able to prove non-uniqueness on an arbitrary time interval $[0,T]$, $T>0$.

\begin{proposition}\label{prp:ext2}
The probability measure $P\otimes_{\tau_{L}}R$ is a simplified dissipative probabilistically  weak solution to the Euler system \eqref{el} on $[0,\infty)$ in the sense of Definition \ref{sprobabilistically weak solution}.
\end{proposition}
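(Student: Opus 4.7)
The plan is to apply Proposition \ref{prop:2} with the stopping time $\tau = \tau_L$ and the measure $P = \mathrm{Law}(u_l, \int_0^\cdot u_l\otimes u_l\,\mathrm{d}s, B)$ provided by Proposition \ref{prop:ext}. That proposition already gives us what we need on the pre-stopping-time interval: $P$ is a simplified dissipative probabilistically weak solution to \eqref{el} on $[0,\tau_L]$ in the sense of Definition \ref{def:martsol}. Since $\tau_L \leq L$ is bounded, Proposition \ref{prop:1} furnishes a measurable family $\{Q_\omega\}$ which concatenates the trajectory $\omega$ up to $\tau_L(\omega)$ with a globally defined simplified dissipative probabilistically weak solution starting at $\tau_L(\omega)$ from the endpoint $(x,y,b)(\tau_L(\omega),\omega)$. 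Thus the only nontrivial ingredient for the application of Proposition \ref{prop:2} is the stopping-time stability condition \eqref{Q1}.

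The key step is therefore to verify that for $P$-a.e.\ $\omega$ one has $Q_\omega(\tau_L(\omega')=\tau_L(\omega))=1$. By \eqref{qomega}, $Q_\omega$-a.s.\ the sample path $\omega'$ coincides with $\omega$ on $[0,\tau_L(\omega)]$, so it suffices to show that $\tau_L$ is a function of the restriction of the trajectory to $[0,\tau_L(\omega)]$. This is built into the very definition of $\tau_L$ as the increasing limit of $\tau_L^n$. Indeed, each norm appearing in the definition of $\tau_L^{n,1}$ and $\tau_L^{n,2}$, namely $\|Gb\|_{C^{1/2-2\delta}_t H^1}$, $\|\bar M\|_{W^{\beta,p}_t}$ and the pointwise value $\|Gb(t)\|_{H^{(3+\sigma)/2}}$ evaluated via an infimum over $[0,t]$, depends only on $\omega|_{[0,\tau_L^n]}$; together with $\tau_L^n\leq\tau_L$ this yields $\tau_L^n(\omega')=\tau_L^n(\omega)$ for every $n$, and passing to the limit gives $\tau_L(\omega')=\tau_L(\omega)$. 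Moreover, the process $\bar M^{(x,y,b)}$ in \eqref{eq:M1} is well-defined pathwise via the Young integral (by Lemma~\ref{lem:young} combined with the regularity exponents fixed in Section~\ref{s:path} and assumption (G4)), so no measure-dependent stochastic integral enters.

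Collecting these facts, we set $\mathcal{N}$ to be the $P$-null complement of the Borel set on which $\omega\in\Omega_{SPW}\cap\Omega_{SPW,0}$ (recall $P(\Omega_{SPW,0})=1$ by the same a priori argument used at the beginning of the proof of Proposition~\ref{prop:2}) and on which the above pathwise identity $\tau_L(\omega')=\tau_L(\omega)$ holds $Q_\omega$-a.s.; both requirements hold outside a $P$-null set. Proposition \ref{prop:2} then applies directly and yields that $P\otimes_{\tau_L}R$ is a simplified dissipative probabilistically weak solution on $[0,\infty)$ in the sense of Definition \ref{sprobabilistically weak solution}. The mildly delicate point, which we expect to be the main obstacle, is the pathwise measurability argument ensuring that $\omega \mapsto \tau_L(\omega)$ is truly determined by $\omega|_{[0,\tau_L(\omega)]}$ even though $\tau_L$ itself is defined as a limit of stopping times in the right-continuous augmented filtration; this however reduces to the monotonicity in $t$ of all the norms used, so it amounts to a direct verification.
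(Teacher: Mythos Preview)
Your overall strategy is correct and matches the paper: the entire content of the proposition is the verification of \eqref{Q1}, after which Propositions~\ref{prop:1} and~\ref{prop:2} finish the job. However, your argument for \eqref{Q1} contains a genuine gap.

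You claim that ``$\tau_L^n$ depends only on $\omega|_{[0,\tau_L^n]}$, and together with $\tau_L^n\leq\tau_L$ this yields $\tau_L^n(\omega')=\tau_L^n(\omega)$''. This is not valid as stated. The times $\tau_L^n$ are entry times into \emph{open} sets (strict inequality $>L-\tfrac1n$), so in general they are only $(\mathcal B_{SPW,t})_{t\geq0}$-stopping times, not $(\mathcal B^0_{SPW,t})_{t\geq0}$-stopping times, and are \emph{not} determined by the trajectory up to and including the hitting time: one also needs to know what happens immediately afterwards. Concretely, if $\tau_L^n(\omega)=\tau_L(\omega)$ (non-strict), then knowing $\omega'=\omega$ on $[0,\tau_L(\omega)]$ does not exclude $\tau_L^n(\omega')>\tau_L(\omega)$. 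Your proposed fix via ``monotonicity in $t$ of all the norms used'' does not apply either: the pointwise norm $t\mapsto\|Gb(t)\|_{H^{(3+\sigma)/2}}$ is not monotone, and monotonicity alone does not resolve the open--versus--closed hitting issue anyway.

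The paper handles this differently. It first shows that for $P$-a.e.\ $\omega$ and $Q_\omega$-a.e.\ $\omega'$, the process $Gb^{\omega'}$ has the required regularity on \emph{all} of $[0,\infty)$, using that after $\tau_L(\omega)$ the increment $b^{\omega'}-b^{\omega'}_{\cdot\wedge\tau_L(\omega)}$ is again a Wiener process under $R_{\tau_L(\omega),(x,y,b)(\tau_L(\omega),\omega)}$; for $\bar M^{\omega'}$ only regularity on $[0,\tau_L(\omega)]$ is claimed. With this regularity in hand, one can replace $\tau_L$ by the closed-set hitting time $\bar\tau_L$ (thresholds $\geq L$, and for the $\bar M$-part restricted to $[0,\tau_L(\omega)]$), for which the set $\{\bar\tau_L=\tau_L(\omega)\}$ is genuinely $\mathcal B^0_{SPW,\tau_L(\omega)}$-measurable and hence determined by $\omega'|_{[0,\tau_L(\omega)]}=\omega|_{[0,\tau_L(\omega)]}$. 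Your purely pathwise shortcut could be salvaged by proving that for $P$-a.e.\ $\omega$ one has $\tau_L^n(\omega)<\tau_L(\omega)$ strictly for all $n$ whenever $\tau_L(\omega)<L$ (continuity of the norms under $P$ would give this), but that verification is precisely the missing step, and it is not the route the paper takes.
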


\begin{proof}
In light of Proposition \ref{prop:1} and Proposition \ref{prop:2}, it only remains to establish \eqref{Q1}.
Due to \eqref{eq} \eqref{eq1} and \eqref{eq2}, we know that
\begin{align*}
\begin{aligned}
&P\left(\omega:Gb^\omega(\cdot\wedge \tau_L(\omega))\in CH^{\frac{3+\sigma}{2}}\cap C^{\frac{1}{2}-\delta}_{\mathrm{loc}}H^{1}\right)\\
&\qquad=\mathbf{P}\left(Gb^{(u,B)}(\cdot\wedge \tau_L(u,B))\in CH^{\frac{3+\sigma}{2}}\cap C_{\mathrm{loc}}^{\frac{1}{2}-\delta}H^{1}\right)\\
&\qquad=\mathbf{P}\left(GB(\cdot\wedge T_L)\in CH^{\frac{3+\sigma}{2}}\cap C^{\frac{1}{2}-\delta}_{\mathrm{loc}}H^{1}\right)=1,
\end{aligned}
\end{align*}
\begin{align*}
\begin{aligned}
P\left(\omega:\bar{M}^\omega(\cdot\wedge \tau_L(\omega))\in W^{\beta,p}_{\tau_L(\omega)}\right)&=\mathbf{P}\left(\bar{M}^{(u,B)}(\cdot)\in W^{\beta,p}_{\tau_L(u,B)}\right)\\
&=\mathbf{P}\left(\int_0^{\cdot}\langle GB,G\dif B\rangle\in  W^{\beta,p}_{T_L}\right)=1.
\end{aligned}
\end{align*}
In other words,   there exists a $P$-measurable set $\mathcal{N}\subset \Omega_{SPW}$ such that $P(\mathcal{N})=0$ and for $\omega\in \mathcal{N}^c$
\begin{equation}\label{continuity}
Gb^\omega_{\cdot\wedge \tau_L(\omega)}\in CH^{\frac{3+\sigma}{2}}\cap C^{\frac{1}{2}-\delta}_{\mathrm{loc}}H^{1},\quad \bar{M}^\omega_{\cdot\wedge \tau_L(\omega)}\in  W^{\beta,p}_{\tau_L(\omega)}.
\end{equation}

Using  \eqref{qomega} and \eqref{qomega2} it holds that for all $\omega\in\Omega,$ with $\Omega$ defined in the statement of Proposition~\ref{prop:1},
\begin{equation*}\aligned
& Q_\omega\left(\omega'\in\Omega_{SPW} ; Gb^{\omega'}_{\cdot}\in CH^{\frac{3+\sigma}{2}}\cap C^{1/2-\delta}_{\mathrm{loc}}H^{1}, \right)
\\&=Q_\omega\left(\omega'\in\Omega_{SPW}; Gb^{\omega'}_{\cdot\wedge \tau_L(\omega)}\in CH^{\frac{3+\sigma}{2}}\cap C^{1/2-\delta}_{\mathrm{loc}}H^{1}, Gb^{\omega'}_{\cdot}-Gb_{\cdot\wedge\tau_L(\omega)}^{\omega'}\in CH^{\frac{3+\sigma}{2}}\cap C^{1/2-\delta}_{\mathrm{loc}}H^{1}\right)
\\&=\delta_\omega\left(\omega'\in\Omega_{SPW}; Gb^{\omega'}_{\cdot\wedge \tau_L(\omega)}\in CH^{\frac{3+\sigma}{2}}\cap C_{\mathrm{loc}}^{1/2-\delta}H^{1}\right) \\
&\qquad\times  R_{\tau_L(\omega),(x,y,b)(\tau_L(\omega),\omega)}\left(\omega'\in\Omega_{SPW}; Gb^{\omega'}_{\cdot}-Gb_{\cdot\wedge\tau_L(\omega)}^{\omega'}\in CH^{\frac{3+\sigma}{2}}\cap C^{1/2-\delta}_{\mathrm{loc}}H^{1}\right).
\endaligned
\end{equation*}
Here the first factor on the right hand side equals to $1$ for all  $\omega\in \mathcal{N}^c$ due to \eqref{continuity}.
Since $R_{\tau_L(\omega),(x,y,z)(\tau_L(\omega),\omega)}$ is a simplified dissipative probabilistically weak solution starting at the deterministic time $\tau_{L}(\omega)$ from the deterministic initial condition $(x,y,b)(\tau_{L}(\omega),\omega)$, the process $\omega'\mapsto b_{\cdot}^{\omega'}-b^{\omega'}_{\cdot\wedge \tau_L(\omega)}$ is a cylindrical $(\mathcal{B}_{SPW,t})_{t\geq0}$-Wiener process on $U$ starting from $\tau_L(\omega)$   under the measure $R_{\tau_L(\omega),(x,y,b)(\tau_L(\omega),\omega)}$.
Thus we deduce that also the second factor equals to $1$.

To summarize, we have proved that  for all  $\omega\in \mathcal{N}^c\cap \Omega$, with $\Omega$ defined in the statement of Proposition~\ref{prop:1},
$$
Q_\omega\left(\omega'\in\Omega_{SPW}; Gb^{\omega'}_{\cdot}\in CH^{\frac{3+\sigma}{2}}\cap C^{1/2-\delta}_{\mathrm{loc}}H^{1}\right)=1,
$$
and similarly we obtain
$$
Q_\omega\left(\omega'\in\Omega_{SPW}; \bar{M}^{\omega'}_{\cdot\wedge \tau_{L}(\omega)}\in W^{\beta,p}_{\tau_L(\omega)}\right)=1.
$$
As a consequence, for all
$\omega\in \mathcal{N}^c\cap\Omega$ there exists a measurable set $N_\omega$ such that $Q_\omega(N_\omega)=0$ and for all $\omega'\in N_\omega^c$ the trajectory
$t\mapsto Gb^{\omega'}(t)$ belongs to $CH^{\frac{3+\sigma}{2}}\cap C^{1/2-\delta}_{\mathrm{loc}}H^{1}$ and $t\mapsto \bar{M}^{\omega'}(t)$ belongs to $ W^{\beta,p}_{\tau_L(\omega)}$. Therefore, by \eqref{eq:tauL} and continuity we obtain that
 $$\{\omega'\in N_\omega^c:\tau_L(\omega')=\tau_L(\omega)\}=\{\omega'\in N_\omega^c:\bar{\tau}_L(\omega')=\tau_L(\omega)\},$$
  where
 \begin{equation*}
 \bar{\tau}^1_L(\omega')=\inf\left\{t\geq 0, \|Gb^{\omega'}(t)\|_{H^{\frac{3+\sigma}{2}}}\geq L\right\}\wedge\inf\left\{t\geq 0,\|Gb^{\omega'}\|_{C_t^{1/2-2\delta}H^{1}}\geq L\right\}\wedge L,
 \end{equation*}
 \begin{equation*}
 \bar{\tau}^2_L(\omega')=\inf\left\{t\geq 0, \|\bar{M}^{\omega'}\|_{W^{\beta,p}_{t\wedge\tau_L(\omega)}}\geq L\right\}\wedge L,\qquad \bar{\tau}_L=\bar{\tau}^1_L\wedge \bar{\tau}^2_L.
 \end{equation*}
Also for $t< L$
\begin{equation}
\label{mea}
\aligned
&\left\{\omega'\in N_\omega^c,\bar{\tau}_L(\omega')\leq t\right\}=\left\{\omega'\in N_\omega^c, \sup_{s\in\mathbb{Q},s\leq t}
\|Gb^{\omega'}(s)\|_{H^{\frac{3+\sigma}{2}}}\geq L\right\}
\\&\qquad\cup\left\{\omega'\in N_\omega^c,\sup_{s_1\neq s_2\in \mathbb{Q}\cap [0,t]}\frac{\|Gb^{\omega'}(s_1)-Gb^{\omega'}(s_2)\|_{H^{1}}}{|s_1-s_2|^{\frac{1}{2}-2\delta}}\geq L\right\}
\\&\qquad\cup\left\{\omega'\in N_\omega^c,
\|\bar{M}^{\omega'}\|_{W^{\beta,p}_{t\wedge\tau_L(\omega)}}\geq L\right\}
=:N^c_\omega\cap A_t.
\endaligned
\end{equation}
Finally, we deduce that for all $\omega\in\mathcal{N}^c\cap \Omega$
\begin{equation*}\label{Q}
\aligned
&Q_\omega\big(\omega'\in\Omega_{SPW}; \tau_L(\omega')=\tau_L(\omega)\big)=Q_\omega\big(\omega'\in N_\omega^c; \tau_L(\omega')=\tau_L(\omega)\big)
\\&\quad=Q_\omega\big(\omega'\in N_\omega^c; \omega'(s)=\omega(s), 0\leq s\leq \tau_L(\omega), \bar{\tau}_L(\omega')=\tau_L(\omega)\big)=1,
\endaligned
\end{equation*}
where we used \eqref{qomega} and the fact that (\ref{mea}) implies
$$\{\omega'\in N_\omega^c; \bar{\tau}_L(\omega')=\tau_L(\omega)\}=N_\omega^c\cap (A_{\tau_L(\omega)}\backslash (\cup_{n=1}^\infty A_{\tau_L(\omega)-\frac1n}))\in N_\omega^c\cap \mathcal{B}_{SPW,\tau_L(\omega)}^0,$$
 and  $Q_\omega(A_{\tau_L(\omega)}\backslash (\cup_{n=1}^\infty A_{\tau_L(\omega)-\frac1n}))=1$.
 This verifies the condition \eqref{Q1} in Proposition \ref{prop:2} and as a consequence $P\otimes_{\tau_{L}}R$ is a simplified dissipative probabilistically weak  solution to the Euler system \eqref{el} on $[0,\infty)$ in the sense of Definition \ref{sprobabilistically weak solution}.
\end{proof}

Finally, we have all in hand to  prove  the main result of this section.

\begin{proof}[Proof of Theorem  \ref{Main results2}]
By Proposition~\ref{prp:ext2} we obtain the existence of infinitely many simplified dissipative probabilistically weak solutions $P=P_{l}$ on $[0,\infty)$ starting from the initial value $(\bar{v}(0),0,0)$ and parametrized by $l\in[2,\infty]$.
In addition,
 it holds $P_{l}$-a.s.
$$\|x(t\wedge \tau_L)\|_{L^2}^2=\|\bar{v}(0)\|_{L^2}^2+2\int_0^{t\wedge \tau_L}\langle x,G\dif B\rangle+\left(\frac12-\frac{1}{l}\right)(t\wedge\tau_L)\|G\|_{L_2(U,L^2)}^2\quad \mbox{for a.e.}\  t\in[0,T].$$
Thus, we obtain that the law of $(x,y,b)$ on $[0,T]$ is not unique. In addition, by (M2) we know that under each $P_{l}$ the law of $b$ is determined by $(x,y)$. Therefore, we deduce that the law $(x,y)$ is not unique which completes the proof of non-uniqueness in law for simplified dissipative martingale solutions.
\end{proof}

\section{Existence and non-uniqueness of strong Markov solutions}\label{sec:mar}

We aim at showing existence of a strong Markov dissipative solution to \eqref{el}. The approach relies on the abstract Markov selection introduced by Krylov~\cite{K73} and presented by Stroock, Varadhan~\cite{SV79}. Applications to SPDEs can be found in \cite{FR08, GRZ09, BFH18markov} where existence of almost sure Markov solutions was proved for several SPDEs including stochastic Navier--Stokes equation in the incompressible as well as compressible setting. In these works, the existence of solutions satisfying the usual Markov property, was left open.
Motivated by the recent construction of solution semiflows to deterministic isentropic and complete Euler system in \cite{BreFeiHof19B,BreFeiHof19}, we put forward a construction which not only applies to the stochastic Euler equations \eqref{el} but it even permits to obtain strong Markov solutions.

The principal idea is to include an additional datum into the selection procedure. In \cite{BreFeiHof19B,BreFeiHof19}, this was done through the energy which in our notation corresponds to
$
E(t):= \int_{\T}\dif\tr\mathfrak{R}(t).
$
In the deterministic setting, this function is non-increasing hence of finite variation. Thus, it admits left- and right-limits at all times and is continuous except for an at most countable set of times.
Nevertheless, as in the stochastic setting $E$ is not decreasing due to the martingale part and even if it admits left- and right-limits  these exceptional times become random, it does not seem  possible to get existence of Markov selections by including solely the variable $E$. As suggested in Section~\ref{s:p1}, we rather include the variable $z$ which plays the same role but in addition has time continuous trajectories.

As uniqueness of simplified dissipative martingale solutions was disproved in Section~\ref{s:law}, it is necessary to find additional selection criteria in order to select  physically relevant solutions. A well-accepted criterion in the case of the deterministic Euler equations is  the maximization of energy dissipation or, equivalently, minimization of the total energy of the system proposed by Dafermos \cite{Daf4}. In the same spirit, we aim at selecting only solutions which minimize the average total energy. To be more precise, let  $P$ and $Q$ be two dissipative martingale solutions starting from the same initial data $(x_0,y_0,z_0)$. We introduce the relation
\begin{align*}
P\prec Q\Leftrightarrow \E^{P}\left[\int_{\T}\dif\tr\mathfrak{R}(t)\right]\leq \E^{Q}\left[\int_{\T}\dif\tr\mathfrak{R}(t)\right]\textrm{ for a.e. } t\in (0,\infty),
\end{align*}
which leads to the following admissibility condition.

\bd[Admissible dissipative martingale solution]
We say that a dissipative martingale solution $P$ starting from $(x_0,y_0,z_0)$ is admissible if it is minimal with respect to the relation $\prec$, i.e., if there is another dissipative martingale solution $Q$ starting from $(x_0,y_0,z_0)$ such that $Q\prec P$, then
$$
\E^{Q}\left[\int_{\T}\dif\tr\mathfrak{R}(t)\right]=\E^P\left[\int_{\T}\dif\tr\mathfrak{R}(t)\right]\textrm{\emph{ for a.e. }}t\in (0,\infty).
$$

\ed

We remark that this definition is consistent with the admissibility condition in the deterministic compressible setting introduced in \cite{BreFeiHof19B,BreFeiHof19}. In these works, it allowed to establish stability of deterministically stationary states, i.e., time independent solutions: if the system reaches
such an equilibrium then it remains there for all times.

\begin{remark}
It is interesting to see what admissibility means for the convex integration solutions from Section~\ref{s:ci}.
Let $T_{L}$ be the stopping time defined in \eqref{stopping time}. Then the solutions $u_{l}=v_{l}+GB_{L}$, $l\in[2,\infty]$, constructed through Theorem~\ref{th:con} satisfy
$$
\E^{\mathbf{P}}\left[\|u_{l}(t)\|_{L^{2}}^{2}\right]=\|u_{0}\|_{L^{2}}^{2}+\left(\frac12 -\frac{1}{l}\right)\E^{\mathbf{P}}\left[(t\wedge T_{L})\right]\|G\|_{L_{2}(U,L^{2})}^{2}.
$$
We observe that the right hand side is minimal when $l=2$.
%
However, we note that apart from the convex integration solutions parametrized by $l\in[2,\infty]$, the same convex integration construction gives raise to other convex integration solution where the defect in the energy is prescribed differently. Such solutions does not have to be comparable by the admissibility criterion. Furthermore, it actually seems that solutions obtained by convex integration as in Section~\ref{s:ci} cannot be admissible.
\end{remark}

\subsection{Selection of strong Markov processes}

Once a suitable notion of solution has been found, the abstract framework of Markov selections can be applied. For  readers' convenience, we recall  some of the key results in this respect presented by Stroock and Varadhan \cite{SV79}.
 First, based on \cite[Lemma 1.3.3, Theorem 1.3.4]{SV79} we obtain a disintegration result, that is, the existence of a regular conditional probability distribution.
 We  use $a(t,\omega)$ to denote $(x(t,\omega),y(t,\omega),z(t,\omega))$ for simplicity.

\bt\label{rcpd}
Given $P\in \mathscr{P}(\Omega_{M})$ and $\tau$ a finite $(\mathcal{B}_{M,t}^0)_{t\geq0}$-stopping time, there exists a regular conditional probability distribution (abbreviated as r.c.p.d.) $P(\cdot|\mathcal{B}_{M,\tau}^0)(\omega)$, $\omega\in\Omega_{M}$, of $P$ with respect to $\mathcal{B}_{M,\tau}^0$ such that
\begin{enumerate}
  \item For every $\omega\in\Omega_{M}$, $P(\cdot|\mathcal{B}_{M,\tau}^0)(\omega)$ is a probability measure on $(\Omega_{M},{\mathcal{B}_M})$.
  \item For every $A\in{\mathcal{B}_M}$, the mapping $\omega\mapsto P(A|\mathcal{B}_{M,\tau}^0)(\omega)$ is $\mathcal{B}^{0}_{M,\tau}$-measurable.
  \item There exists a $P$-null set $N\in\mathcal{B}_{M,\tau}^0$ such that for any $\omega\notin N$
     $$P\left(\{\tilde{\omega};\,\tau(\tilde\omega)=\tau(\omega),a(s,\tilde{\omega})=a(s,\omega),0\leq s\leq \tau(\tilde\omega)\}|\mathcal{B}_{M,\tau}^0\right)(\omega)=1.$$
  \item For any Borel set $A\in \mathcal{B}_{M,\tau}^0$ and any Borel set $B\subset\Omega_M $
  $$P\left(a|_{[0,\tau]}\in A, a|_{[\tau,\infty)}\in B\right)=\int_{\tilde{\omega}\in A}P\left(B|\mathcal{B}_{M,\tau}^0\right)(\tilde\omega)\dif P(\tilde\omega).$$
\end{enumerate}
\et

According to   \cite[Theorem~6.1.2]{SV79} we obtain the following reconstruction result.

\bt\label{th:m2} Let $\tau$ be a finite $(\mathcal{B}_{M,t}^0)_{t\geq0}$-stopping time.
Let $\omega\mapsto Q_\omega$ be a mapping from $\Omega_M$ to $\mathscr{P}(\Omega_M)$ such that for any $A\in {\mathcal{B}_M}$, $\omega\mapsto Q_\omega(A)$ is $\mathcal{B}_{M,\tau}^0$-measurable and for any $\omega\in \Omega_M$
$$Q_\omega\big(\tilde{\omega}\in \Omega_M: a(\tau(\omega),\tilde{\omega})=a(\tau(\omega),\omega)\big)=1.$$
Then for any $P\in \mathscr{P}(\Omega_M)$, there exists a unique $P\otimes_{\tau}Q\in \mathscr{P}(\Omega_M)$ such that
$$(P\otimes_{\tau} Q)(A)=P(A)\  \mbox{for all}\  A\in \mathcal{B}_{M,\tau}^0,$$
and for $P\otimes_{\tau} Q$-almost all $\omega\in \Omega_M$
$$\delta_\omega\otimes_{\tau(\omega)} Q_\omega=(P\otimes_{\tau}Q)\left(\cdot|\mathcal{B}_{M,\tau}^0\right)(\omega).$$

\et

Recall that $\mathbb{X}=\{(x_0,y_0,z_0)\in L^2_\sigma\times \mathcal{M}^+(\mT^3,\mR^{3\times 3}_{\rm{sym}})\times \mR;\,\|x_0\|^2_{L^2}\leq z_0\}$.   We say $P\in \mathscr{P}_{\mathbb{X}}(\Omega_M)\subset \mathscr{P}(\Omega_M)$ is concentrated on the paths with values in $\mX$ if there exists $A\in \mathcal{B}$ with $P(A)=1$ such that $A\subset \{\omega\in\Omega_M;\,\omega(t)\in \mX \ \mbox{for all}\  t\geq0\}$.
It is clear that $\mathcal{B}(\sP_{\mX}(\Omega_M))=\mathcal{B}(\sP(\Omega_M))\cap \sP_{\mX}(\Omega_M)$. We also use $\mathrm{Comp}(\sP_{\mX}(\Omega_M))$ to denote the space of all compact subsets of $\sP_{\mX}(\Omega_M)$ and by $\sC(a)$ we denote the set of dissipative  martingale solutions starting from $a\in \mX$ at time $s=0$.
The shift operator $\Phi_t:\Omega_M\to \Omega_M^t$ is defined by
$$\Phi_t(\omega)(s):=\omega(s-t), \quad s\geq t.$$

Now, we have all in hand to recall the definition of a strong Markov process.

\bd\label{def:ad}
A family $(P_a)_{a\in \mX}$ of probability measures in $\mathscr{P}_{\mX}(\Omega_M)$, is called a strong Markov family provided
\begin{enumerate}
\item  for every $A\in\mathcal{B}$, the mapping $a\mapsto P_a(A)$ is $\mathcal{B}(\mX)/\mathcal{B}([0,1])$-measurable,
\item for every  finite $(\mathcal{B}_{M,t}^0)_{t\geq0}$-stopping time $\tau$, every $a\in\mathbb{X}$ and for $P_a$-a.s. $\omega\in \Omega_M$
$$P_a\left(\cdot|\mathcal{B}_{M,\tau}^0\right)(\omega)=P_{a(\tau(\omega),\omega)}\circ \Phi_{\tau(\omega)}^{-1}.$$
\end{enumerate}
\ed

A strong Markov family can be obtained from a so-called pre-Markov family through a selection procedure.

\bd\label{def:pre}
Let the mapping $\mX\to \mathrm{Comp}(\sP_{\mX}(\Omega_{M}))$, $  a\mapsto \sC(a) ,$ be Borel measurable. We say that $(\sC(a))_{a\in \mX}$ forms a pre-Markov family if for each $a\in \mX$, $P\in \sC(a)$ and every finite $(\mathcal{B}^{0}_{M,t})_{\geq0}$-stopping time $\tau$ there holds
\begin{enumerate}
  \item (Disintegration) there is a $P$-null set $N\in \mathcal{B}_{M,\tau}^0$ such that for $\omega\notin N$,
  $$a(\tau(\omega),\omega)\in \mX, \quad P\left(\Phi_{\tau(\omega)}(\cdot)|\mathcal{B}_{M,\tau}^0\right)(\omega)\in \sC(a(\tau(\omega),\omega)),$$
  \item (Reconstruction) if a mapping $\Omega_{M}\to\sP_{\mX}(\Omega_{M})$, $ \omega\mapsto Q_\omega,$ satisfies the assumptions of Theorem~\ref{th:m2} and there is a $P$-null set $N\in\mathcal{B}_{M,\tau}^0$ such that for all $\omega\notin N$
      $$a(\tau(\omega),\omega)\in \mX,\quad Q_\omega\circ \Phi_{\tau(\omega)}\in \sC(a(\tau(\omega),\omega)),$$
      then $P\otimes_\tau Q\in \sC(a)$.
\end{enumerate}
\ed

Finally, we  recall the following abstract Markov selection theorem, cf.  \cite[Theorem~12.2.3]{SV79}. The generalization to our setting of Polish spaces can be found in \cite{GRZ09}. One difference is that in the definition of the path space $\Omega_{M}$ we need to include $y\in W^{\alpha,q}_{\rm{loc}}([0,\infty);W^{-k,p}(\T,\R^{3\times 3}_{\rm{sym}}))$. It can be checked that the proof \cite[Theorem~12.2.3]{SV79} applies also to this setting.  The only missing point is then the maximization of a given functional, however, this can be achieved easily by choosing  this functional in the selection procedure as the first functional to be maximized.

\bt\label{selection}
Let $(\sC(a))_{a\in \mX}$ be a pre-Markov family. Suppose that for each $a\in\mX$, $\sC(a)$ is non-empty and convex. Then there exists a measurable
selection $\mX\to\sP_{\mX}(\Omega_{M})$, $ a\mapsto P_a ,$ such that $P_{a}\in\sC(a)$ for every $a\in\mathbb{X}$ and $(P_a)_{a\in \mX}$ is a strong Markov family. In addition, if $F:\mathbb{X}\to \R$ is a bounded continuous function and $\lambda>0$ then  the selection can be chosen for every $a\in\mathbb{X}$ to maximize
\begin{equation}\label{eq:func}
\E^{P}\left[\int_{0}^{\infty}e^{-\lambda s}F(x(s),y(s),z(s))\dif s\right]
\end{equation}
among all dissipative martingale solutions $P$ with the initial condition $a$.
\et

\subsection{Application to the stochastic Euler equations}\label{s:6.2}

Our goal is to verify the assumptions of Theorem~\ref{selection} and to choose a suitable function $F$ so that  the Markov selection only contains admissible dissipative martingale solutions. To be more precise, we aim at proving the following.

\bt\label{th:ma1} Suppose that \emph{(G1)}, \emph{(G2)} hold.
The family $(\sC(a))_{a\in \mX}$ admits a measurable strong Markov selection. In other words, there exists a strong Markov family $(P_a)_{a\in \mX}$ solving the stochastic Euler equations \eqref{el}.
Moreover,
 for every $a\in \mX$, the dissipative martingale solution $P_a$ is admissible in the sense of Definition \ref{def:ad}.
\et

\begin{remark}
We note that by choosing a different function $F$ from the one needed in the proof of Theorem~\ref{th:ma1}, we could find a strong Markov selection which is not necessarily admissible. This fact will be explored in Section~\ref{s:nonM} where combined with the non-uniqueness in law from Section~\ref{s:law} it permits to prove non-uniqueness of strong Markov selections.
\end{remark}

In the remainder of this subsection, we prove Theorem~\ref{th:ma1}. First of all, we recall that
Theorem~\ref{convergence1} yields stability of dissipative martingale solutions and as a consequence of Theorem~\ref{existence} we obtain their existence. Thus, we deduce that for every $a\in\mathbb{X}$ the set $\sC(a)$ is a non-empty subset of $\mathrm{Comp}(\mathscr{P}_{\mathbb{X}}(\Omega_{M}))$ and that the mapping $\mathbb{X}\to \mathrm{Comp}(\mathscr{P}_{\mathbb{X}}(\Omega_{M}))$, $a\mapsto \sC(a)$, is Borel measurable. We refer to \cite{FR08} for more details on this step. The convexity of $\sC(a)$ is immediate as the conditions (M1), (M2), (M3) only contain integration with respect to probability measures.

As the next step, we verify  that $(\sC(a))_{a\in \mX}$ has the disintegration as well as the reconstruction property from Definition \ref{def:pre}.

\bl
The family $(\sC(a))_{a\in \mX}$ satisfies the disintegration property in Definition \ref{def:pre}.
\el

\begin{proof}
Fix $a\in \mX$, $P\in \sC(a)$ and a finite $(\mathcal{B}^{0}_{M,t})_{t\geq0}$-stopping time $\tau$. Let $P(\cdot|\mathcal{B}_{M,\tau}^0)(\omega)$ be a r.c.p.d. of $P$ with respect to $\mathcal{B}_{M,\tau}^0$. We want to show that there is a $P$-null set $N\in \mathcal{B}_{M,\tau}^0$ such that for all $\omega\notin N$
$$a(\tau(\omega),\omega)\in \mX,\quad P\left(\Phi_{\tau(\omega)}(\cdot)|\mathcal{B}_{M,\tau}^0\right)(\omega)\in \sC(a(\tau(\omega),\omega)).$$
Due to (M3) and continuity of $x$ we know
$$P\big(\|x(\tau)\|_{L^2}^2\leq z(\tau)\big)=1.$$
Let $N_0=\{\|x(\tau)\|_{L^2}^2\leq z(\tau)\}^c$. Then $P(N_0)=1$ and $N_0\in \mathcal{B}_{M,\tau}^0$.

Next, we shall verify that $P(\Phi_{\tau(\omega)}(\cdot)|\mathcal{B}_{M,\tau}^0)(\omega)$ satisfies the conditions (M1), (M2), (M3) in Definition~\ref{martingale solution1} with the initial condition $a(\tau(\omega),\omega)$ and the initial time $0$ or alternatively that $P(\cdot|\mathcal{B}_{M,\tau}^0)(\omega)$ satisfies (M1), (M2), (M3) in Definition~\ref{martingale solution1} with the initial condition $a(\tau(\omega),\omega)$ and the initial time $\tau(\omega)$.

(M1): Due to (3) from Theorem \ref{rcpd}, it follows that outside of a $P$-null set in $\mathcal{B}^{0}_{M,\tau}$, it holds
$$
P\left(\{ \tilde\omega;\,a(\tau(\omega),\tilde\omega)=a(\tau(\omega),\omega) \}  |\mathcal{B}_{M,\tau}^0\right)(\omega)=1.
$$
In other words, $P(\cdot|\mathcal{B}_{M,\tau}^0)(\omega)$ has the correct initial value at the initial time $\tau(\omega)$.
By using (4) in Theorem~\ref{rcpd} we deduce
\begin{align*}
1&=P\left(\mathfrak{N}\in L^\infty_{\rm{loc}}([0,\infty);\mathcal{M}^+(\mT^3,\mR^{3\times 3}_{\rm{sym}}))\right)\\
&=P\left(\mathfrak{N}\in L^\infty_{\rm{loc}}([0,\tau];\mathcal{M}^+(\mT^3,\mR^{3\times 3}_{\rm{sym}})),\mathfrak{N}\in L^\infty_{\rm{loc}}([\tau,\infty);\mathcal{M}^+(\mT^3,\mR^{3\times 3}_{\rm{sym}}))\right)\\
&=\int_{\omega\in\{\mathfrak{N}\in L^\infty_{\rm{loc}}([0,\tau];\mathcal{M}^+(\mT^3,\mR^{3\times 3}_{\rm{sym}}))\}} P\left(\mathfrak{N}\in L^\infty_{\rm{loc}}([\tau,\infty);\mathcal{M}^+(\mT^3,\mR^{3\times 3}_{\rm{sym}})|\mathcal{B}^{0}_{M,\tau}\right)({\omega})\dif P({\omega}),
\end{align*}
which implies that
$$
P\left(\mathfrak{N}\in L^\infty_{\rm{loc}}([\tau,\infty);\mathcal{M}^+(\mT^3,\mR^{3\times 3}_{\rm{sym}})) |\mathcal{B}^{0}_{M,\tau}\right)=1\quad P\textrm{-a.s.} \quad{\omega}\in \Omega_M.$$
Moreover, the corresponding $P$-null set belongs to $\mathcal{B}^{0}_{M,\tau}$.
We denote the union of the above two  $P$-null sets by $N_1$.

(M2): Using \cite[Theorem 1.2.10]{SV79} (cf. \cite[Lemma B.3]{GRZ09}), there exists a $P$-null set $N_2\in \mathcal{B}^{0}_{M,\tau}$
such that for all ${\omega}\notin N_2$, $P( \Phi_{\tau({\omega})}(\cdot)|\mathcal{B}^{0}_{M,\tau})(\omega)$ satisfies (M2).

(M3): Similarly, there exists a $P$-null set $N_3\in \mathcal{B}^{0}_{M,\tau}$
such that for all ${\omega}\notin N_3$, the process $M^E$ is a continuous $(\mathcal{B}_{M,t}^{0})_{t\geq0}$-martingale under $P(\Phi_{\tau({\omega})}(\cdot)|\mathcal{B}_{M,\tau}^0)(\omega)$,
 and
\begin{align*}
P\left(\int_{\T}\dif\tr\mathfrak{R}(t)\leq z(t) \textrm{ for a.e. } t\in [\tau({\omega}),\infty)|\mathcal{B}_{M,\tau}^0\right)({\omega})=1.
\end{align*}

Recall that under $P$ we have for every $t\geq 0$
\begin{align*}
 z(t)&= z(0)+2\int_0^t\dif M^E_{r}+\int_0^t \|G(x(r))\|_{L_2(U,L^2)}^2\dif r.
 \end{align*}
Now, we split this equation into two parts, i.e.,
\begin{align}
\label{eq:r1}
z(t)&= z(0)+2\int_0^t\dif M^E_{r}+\int_0^t \|G(x(r))\|_{L_2(U,L^2)}^2\dif r,\quad 0\leq t\leq \tau({\omega}),
\end{align}
\begin{align}\label{eq:r2}
z(t)&= z(\tau({\omega}))+2\int_{\tau({\omega})}^t\dif M^E_{r}+\int_{\tau({\omega})}^t \|G(x(r))\|_{L_2(U,L^2)}^2\dif r,\quad \tau({\omega})\leq t<\infty,
\end{align}
and  consider the sets
\begin{align*}
R_{\tau({\omega})}&=\left\{\tilde{\omega}\in \Omega_M;\,\tilde{ \omega}|_{[0,\tau({\omega})]} \text{ satisfies \eqref{eq:r1}}\right\},\\
R^{\tau({\omega})}&=\left\{\tilde{\omega}\in \Omega_M;\, \tilde{\omega}|_{[\tau({\omega}),\infty)} \text{ satisfies \eqref{eq:r2}}\right\}.
\end{align*}
We obtain for $P$
\begin{align*}
1=P\left(R_{\tau}\cap R^{\tau}\right)=\int_{R_\tau}P\left(R^{\tau({\omega})}|\mathcal{B}_{M,\tau}^0\right)(\omega)\dif P(\omega),
\end{align*}
where we used (3) from Theorem \ref{rcpd}.
Consequently, there is a $P$-null set $N_{4}\in\mathcal{B}^{0}_{M,\tau}$ such that for every $\omega\notin N_{4}$ it holds $P(R^{\tau({\omega})}|\mathcal{B}_{M,\tau}^0)(\omega)=1$.
Hence (M3) follows.

We complete the proof by choosing the null set $N=\cup_{i=0}^4N_i$.
\end{proof}

\bl
The family $(\sC(a))_{a\in \mX}$ satisfies the reconstruction property in Definition \ref{def:pre}.
\el

\begin{proof}
Fix $a\in \mX$, $P\in \sC(a)$, a finite $(\mathcal{B}^{0}_{M,t})_{t\geq0}$-stopping time $\tau$ and $Q_\omega$ as in Definition \ref{def:pre} (2). We shall show that  $P\otimes_{\tau}Q\in\sC(a)$ hence we need to verify the conditions (M1), (M2), (M3) from Definition~\ref{martingale solution1}.
The proof follows the lines of  Proposition \ref{prop:2} using  the fact that
$$
\delta_\omega\otimes Q_\omega\big(\omega': \tau(\omega')=\tau(\omega)\big)=1,
$$
since for a $(\mathcal{B}^{0}_{M,t})_{t\geq0}$-stopping time $\tau$, $\{\omega': \tau(\omega')=\tau(\omega)\}\in \mathcal{B}^{0}_{M,\tau(\omega)}$.
\end{proof}

Since we intend  to select those solutions that are admissible, we shall find a suitable functional of the form \eqref{eq:func} which achieves this. By integration by parts formula it holds
\begin{align}\label{eq:C1}
\E^P\left[\int_0^\infty e^{-\lambda s}\left(-\int_{\mT^3} \dif\tr y(s)\right)\dif s\right]=-\frac{1}{\lambda}\int_{\mT^3} \dif\tr y(0)+\frac{1}{\lambda}\E^P\left[\int_0^\infty e^{-\lambda s}\left(- \int_{\T}\dif\tr\mathfrak{R}(s)\right)\dif s\right].
\end{align}
Since the initial datum $y_{0}$ is fixed, the minimization of
$
\E^P\left[\int_{\T}\dif\tr\mathfrak{R}(t)\right]
$
required for the admissibility condition is equivalent to maximization of the  functional on the left hand side of \eqref{eq:C1}. Indeed, if $Q\prec P$ and $\E^P\left[\int_0^\infty e^{-\lambda s}\left( \int_{\T}\dif\tr\mathfrak{R}(s)\right)\dif s\right]$ is the minimal one, we obtain $\E^P \left[ \int_{\T}\dif\tr\mathfrak{R}(s)\right]=\E^Q\left[\int_{\T}\dif\tr\mathfrak{R}(s)\right]$ for a.e. $s\in(0,\infty)$.

Even though the  function
$$F:(x,y,z)(s)\mapsto \left(-\int_{\mT^3} \dif\tr y(s)\right)$$
is not bounded on $\mathbb{X}$, it follows from Lemma~\ref{lem:def} using the definition of $y$ together with (M3) that
$$
\E^{P}\left[\sup_{s\in[0,t]}\int_{\mT^3} \dif\tr y(s)\right]\leq  y_{0}+ (z_{0}+Ct)e^{Ct},
$$
where $C>0$ is the constant determined in the linear growth assumption (G1). In other words, there is a universal choice of $\lambda>0$  in \eqref{eq:C1}, independent of $P$ as well as its initial value, such that the functional in \eqref{eq:C1} is well-defined and can be employed in the proof of Theorem~\ref{selection}.

Finally, we have all in hand to apply Theorem~\ref{selection} which completes the proof of Theorem~\ref{th:ma1}.

\subsection{Non-uniqueness of strong Markov solutions}
\label{s:nonM}

Finally, combining the existence of a strong Markov solution from Theorem~\ref{th:ma1} with the non-uniqueness in law from Section~\ref{s:law} we obtain non-uniqueness of strong Markov solutions.  This holds under the assumptions of Section~\ref{s:law}, in particular, for an additive noise.

\bt\label{th:ma2}
Suppose that \emph{(G3)}, \emph{(G4)} hold. Then strong Markov families associated to the stochastic Euler system \eqref{ela} are not unique. More precisely, apart from the admissible strong Markov family $(P_a)_{a\in\mX}$ constructed in Theorem \ref{th:ma1}, there exists a possibly non-admissible strong Markov family.
\et

\begin{proof}
The proof follows the lines of   \cite[Theorem 12.2.4]{SV79}. In particular, in view of the non-uniqueness in law from Theorem~\ref{Main results2} there exists an initial value $x_{0}\in L^{2}_{\sigma}$ such that for $z_{0}:=\|x_{0}\|_{L^{2}}^{2}$ and for every $y_{0}$ there is infinitely many dissipative martingale solutions with the initial condition $(x_{0},y_{0},z_{0})$ at the time $0$. In particular, there are  dissipative martingale solutions $P$, $Q$ and a function $F:\mathbb{X}\to \R$ such that
$$
\E^{P}\left[\int_{0}^{\infty}e^{-\lambda s}F(x(s),y(s),z(s))\dif s\right]>\E^{Q}\left[\int_{0}^{\infty}e^{-\lambda s}F(x(s),y(s),z(s))\dif s\right].
$$
Now, applying Theorem~\ref{selection} once with $F$ and once with $-F$ we obtain selections $(P^{+}_a)_{a\in\mX}$ and $(P^{-}_a)_{a\in\mX}$, respectively. In particular, for $a=(x_{0},y_{0},z_{0})$ it holds
$$
\E^{P^{+}_{a}}\left[\int_{0}^{\infty}e^{-\lambda s}F(x(s),y(s),z(s))\dif s\right]\geq \E^{P}\left[\int_{0}^{\infty}e^{-\lambda s}F(x(s),y(s),z(s))\dif s\right],
$$
and
$$
\E^{Q}\left[\int_{0}^{\infty}e^{-\lambda s}F(x(s),y(s),z(s))\dif s\right] \geq \E^{P^{-}_{a}}\left[\int_{0}^{\infty}e^{-\lambda s}F(x(s),y(s),z(s))\dif s\right].
$$
In other words, the two strong Markov selections are different. The admissibility cannot be guaranteed since for this it is necessary to choose the first functional as in \eqref{eq:C1}.
\end{proof}

\begin{remark}\label{semiflow}
The approach of Theorem~\ref{th:ma2} translated to the deterministic setting implies non-uniqueness of the semiflow associated to the incompressible Euler equations. To be more precise, existence of a semiflow selection can be proved by the same  procedure as in \cite{BreFeiHof19B,BreFeiHof19}, whereas non-uniqueness of solutions satisfying the energy equality was proved in  \cite{DelSze3}. Thus, by a straightforward modification of the proof of Theorem~\ref{th:ma2} the claim follows. A challenging question which remains open in both deterministic and stochastic setting is whether the selection can be chosen admissible, i.e., whether it satisfies the principle of maximal energy dissipation.
\end{remark}

 \appendix
  \renewcommand{\appendixname}{Appendix~\Alph{section}}
  \renewcommand{\theequation}{A.\arabic{equation}}

\section{Young integration}

We present an auxiliary lemma used in Section~\ref{s:ci}.

\begin{lemma}\label{lem:young}
Let $X$ be a Banach space and  $X^{*}$ be its topological dual. Let $\alpha,\beta\in (0,1)$ be such that $\alpha+\beta>1$. Assume that $f\in C^{\alpha}([0,T];X)$ and $g\in C^{\beta}([0,T];X^{*})$. Then the Young integral
$$
t\mapsto \int_{0}^{t}\langle g_{r},\dif f_{r}\rangle
$$
is well-defined and satisfies
$$
\left\|\int_{0}^{\cdot}\langle g_{r},\dif f_{r}\rangle \right\|_{C^{\alpha}_T}\lesssim \|g\|_{C^{\beta}_{T}X^{*}}\|f\|_{C^{\alpha}X}.
$$
\end{lemma}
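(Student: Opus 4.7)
The plan is to use Young's classical construction. Given $0 \leq s \leq t \leq T$ and a partition $\pi = \{s = t_0 < t_1 < \cdots < t_n = t\}$, consider the Riemann sum $I_\pi(s,t) := \sum_{i=0}^{n-1} \langle g_{t_i}, f_{t_{i+1}} - f_{t_i}\rangle$, with the aim of defining $\int_s^t \langle g_r, \dif f_r\rangle$ as $\lim_{|\pi| \to 0} I_\pi(s,t)$. The task thus reduces to proving convergence of these sums to a partition-independent limit and establishing the Hölder bound.

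The key step is a one-point-removal comparison: if $\pi'$ is obtained from $\pi$ by deleting an interior node $t_j$, one has $I_\pi - I_{\pi'} = \langle g_{t_j} - g_{t_{j-1}}, f_{t_{j+1}} - f_{t_j}\rangle$, and therefore
$$
|I_\pi - I_{\pi'}| \leq \|g\|_{C^\beta_T X^*}\,\|f\|_{C^\alpha_T X}\, |t_{j+1} - t_{j-1}|^{\alpha+\beta}.
$$
For any partition with $n \geq 2$ interior points, the pigeonhole principle supplies an index with $|t_{j+1} - t_{j-1}| \leq 2(t-s)/(n-1)$; removing such a minimal-gap point iteratively down to the trivial partition $\{s,t\}$ and summing the errors produces, thanks to the convergence of $\sum_{k \geq 1} k^{-(\alpha+\beta)}$ for $\alpha + \beta > 1$, the sewing bound
$$
|I_\pi(s,t) - \langle g_s, f_t - f_s\rangle| \leq C(\alpha,\beta)\,\|g\|_{C^\beta_T X^*}\,\|f\|_{C^\alpha_T X}\,(t-s)^{\alpha+\beta},
$$
uniformly in $\pi$. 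A standard Cauchy argument along refining partitions then yields existence of a partition-independent limit that is additive in the endpoints. The Hölder estimate follows by combining this sewing bound with the trivial pointwise inequality $|\langle g_s, f_t - f_s\rangle| \leq \sup_{r \in [0,T]} \|g_r\|_{X^*}\,\|f\|_{C^\alpha_T X}\,(t-s)^\alpha$, and dividing by $(t-s)^\alpha$.

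I anticipate no substantive obstacle: the argument is the content of Young's original 1936 paper, or equivalently of one application of Gubinelli's sewing lemma. The only minor bookkeeping concerns the pigeonhole iteration and the reconciliation of the paper's $C^\beta$-seminorm with the full Hölder norm needed to dominate the initial value of $g$, the latter being absorbed into the implicit constant of the $\lesssim$ (harmless in all of the paper's applications, where $g$ vanishes at $t=0$).
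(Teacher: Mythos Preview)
Your proof is correct and is essentially the same argument as the paper's, just unpacked: the paper defines $\Xi_{st} = \langle g_s, f_{st}\rangle$, bounds $|\delta\Xi_{s\theta t}| = |\langle g_{s\theta}, f_{\theta t}\rangle| \lesssim |t-s|^{\alpha+\beta}$, and then invokes the sewing lemma of \cite{DGHT19} as a black box to obtain the remainder estimate $|g^\natural_{st}| \lesssim |t-s|^{\alpha+\beta}$, whereas you carry out Young's original point-removal iteration by hand---which is exactly the content of the sewing lemma's proof. Your observation about the seminorm versus full norm of $g$ is well taken; the paper's proof has the same implicit reliance on $\sup_r\|g_r\|_{X^*}$ in the bound for $|\langle g_s, f_{st}\rangle|$, and in every application in the paper $g$ is indeed taken with $g(0)=0$.
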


\begin{proof}
For $0\leq s\leq\theta\leq  t\leq T$ we use the notation $ f_{s t} = f_t - f_s$  and for a two-index map $\delta h_{s \theta t} = h_{s t} - h_{s \theta} - h_{\theta t}$. We consider the following local approximation of the integral
\begin{equation}\label{eq:yy}
\int_s^t \langle g_{r}, \dif f_{r}\rangle = \langle g_s,  f_{s t}\rangle + g^{\natural}_{s t},
\end{equation}
where $g^{\natural}_{s t}$ is expected to be a sufficiently regular remainder. Then
 we obtain for $\Xi_{st}=\langle g_s,  f_{s t}\rangle$
$$
 | \delta\Xi_{s \theta t} | = | \langle  g_{s \theta},   f_{\theta
   t}\rangle | \leqslant \|g\|_{C^{\beta}_{T}X^{*}}\|f\|_{C^{\alpha}X}
   |t - s |^{\alpha + \beta}.
   $$
Hence the sewing lemma \cite[Lemma~2.2]{DGHT19} yields
$$
 | g^{\natural}_{s t} | \lesssim \|g\|_{C^{\beta}_{T}X^{*}}\|f\|_{C^{\alpha}X}| t - s |^{\alpha + \beta}.
   $$
Thus, \eqref{eq:yy} implies
$$
\sup_{s,t\in[0,T],s\neq t}\frac{\left|\int_{s}^{t}\langle g_{r},\dif f_{r}\rangle \right|}{|t-s|^{\alpha}}\leq \sup_{s,t\in[0,T],s\neq t}\frac{|\langle g_{s},f_{st}\rangle|}{|t-s|^{\alpha}}+ \sup_{s,t\in[0,T],s\neq t}\frac{|g^{\natural}_{s t} |}{|t-s|^{\alpha}}\lesssim \|g\|_{C^{\beta}_{T}X^{*}}\|f\|_{C^{\alpha}X}
$$
and the claim follows.
\end{proof}


\begin{thebibliography}{MMMM9}

\bibitem[Be99]{Bes99} H. Bessaih.
\newblock  Martingale solutions for stochastic Euler equations.
\newblock {\em Stochastic Anal. Appl.}  \textbf{17}(5), 713-725, 1999.

\bibitem[BF99]{BF99} H. Bessaih and F. Flandoli.
\newblock 2-D Euler equation perturbed by noise.
\newblock {\em NoDEA Nonlinear Differential Equations Appl.} \textbf{6} (1), 35-54, 1999.

\bibitem[BFH20a]{BreFeiHof19B}
D.~Breit, E.~Feireisl, and M.~Hofmanov{\' a}.
\newblock Dissipative solutions and semiflow selection for the complete {E}uler
  system.
\newblock {\em Commun. Math. Phys.} \textbf{376}, 1471-1497, 2020



\bibitem[BFH18a]{BFH18markov}
D. Breit, E. Feireisl, M. Hofmanov\'a,
\newblock Markov selection to the stochastic compressible Navier--Stokes system,
  to appear in \emph{Annals of Applied Probability},
\newblock  arXiv:1809.07265, 2018.

\bibitem[BFH20b]{BFH20}
D.~Breit, E.~Feireisl, and M.~Hofmanov{\'a}.
\newblock On solvability and ill-posedness of the compressible {E}uler system
  subject to stochastic forces.
\newblock {\em Analysis and PDE,}  \textbf{13}(2), (2020), 371-402.


\bibitem[BFH20c]{BreFeiHof19}
D.~Breit, E.~Feireisl, and M.~Hofmanov{\' a}.
\newblock Solution semiflow to the isentropic {E}uler system,
\newblock {\em Arch. Rational Mech. Anal.} \textbf{235}, 167-194, 2020.

\bibitem[BFH18b] {BFH18}
D. Breit, E. Feireisl, M. Hofmanov\'a, Stochastically forced compressible fluid flows, De Gruyter Series in Applied and Numerical Mathematics. De Gruyter, Berlin/Munich/Boston, 2018.

\bibitem[BM19]{BM19}
D. Breit and P. R. Mensah,
Stochastic compressible Euler equations and inviscid limits,
{\em Nonlinear Analysis}
\textbf{184}, 218-238, 2019.

\bibitem[BM20]{BM20}
D. Breit and T. C. Moyo. Dissipative solutions to the stochastic Euler equations. arXiv preprint arXiv:2008.09517, 2020.


\bibitem[BFM16]{BFM16} Z. Brz\'{e}zniak, F. Flandoli and M. Maurelli.
\newblock Existence and uniqueness
for stochastic 2D Euler flows with bounded vorticity.
\newblock {\em Arch. Ration. Mech.
Anal.} \textbf{221}(1), 107-142, 2016.


\bibitem[BP01]{BP01} Z. Brz\'{e}zniak and S. Peszat. Stochastic two dimensional Euler equations.
{\em Ann. Probab. } {\bf 29}(4), 1796-1832, 2001.


\bibitem[BCV18] {BCV18} T. Buckmaster, M. Colombo and V. Vicol. Wild solutions of the Navier--Stokes equations whose singular sets in time have Hausdorff dimension strictly less than $1$, {\em Journal of the European Mathematical Society}, to appear, arXiv:1809.00600, 2018.
 \bibitem[BV20] {BV20} T. Buckmaster, V. Vicol. Convex integration constructions in hydrodynamics, {\em Bulletin of the AMS}, to appear, 2020.
 \bibitem[BV19a] {BV19} T. Buckmaster, V. Vicol. Convex integration and phenomenologies in turbulence, {\em EMS Surveys in Mathematical Sciences} {\bf 6}, no. 1/2, 173-263, 2019.
\bibitem[BV19b] {BV19a} T. Buckmaster, V. Vicol. Nonuniqueness of weak solutions to the Navier--Stokes equation. \emph{Ann. of Math.} \textbf{189} (1), 101-144, 2019.




\bibitem[CFF19]{CFF19} E. Chiodaroli, E. Feireisl,  F. Flandoli, Ill-posedness for the full Euler system driven by multiplicative white noise, arXiv:1904.07977, 2019.

\bibitem[CFK15]{CFK} E. Chiodaroli, E. Feireisl,  O. Kreml, On the weak solutions to the quations of a compressible heat conducting gas, \emph{Annales de l'Institut Henri Poincar\'{e} C, Analyse non lin\'{e}aire} \textbf{ 32} (1), 225-243, 2015.


\bibitem[CFH19]{CFH19} D. Crisan, F. Flandoli and D. D. Holm. Solution Properties of a 3D
Stochastic Euler Fluid Equation. {\em J. Nonlinear Sci.} \textbf{29}(3), 813-870, 2019.




\bibitem[Da79]{Daf4}
C. M. Dafermos.
\newblock The second law of thermodynamics and stability.
\newblock {\em Arch. Rational Mech. Anal.}, {\bf 70}:167--179, 1979.

 \bibitem[DPD03]{DD03}   G. Da Prato, A. Debussche, Ergodicity for the 3D stochastic Navier--Stokes
equations, \emph{J. Math. Pures Appl.} (9) \textbf{82} (8), 877-947, 2003.
\bibitem[DPZ92]{DPZ92} G. Da Prato, J. Zabczyk, Stochastic Equations in Infinite Dimensions, Encyclopedia of Mathematics and its Applications, vol. 44, Cambridge University Press, Cambridge, 1992.




\bibitem[DLS09]{DelSze2}
C.~De~Lellis and L.~Sz{\'e}kelyhidi, Jr.
\newblock The {E}uler equations as a differential inclusion.
\newblock {\em Ann. of Math. (2)} \textbf{170} (3), 1417-1436, 2009.

\bibitem[DLS10]{DelSze3}
C.~De~Lellis and L.~Sz{\'e}kelyhidi, Jr.
\newblock On admissibility criteria for weak solutions of the {E}uler
  equations.
\newblock {\em Arch. Ration. Mech. Anal.} \textbf{195} (1), 225-260, 2010.

\bibitem[DLS13]{DelSze13}
C.~De~Lellis and L.~Sz{\'e}kelyhidi, Jr.
\newblock Dissipative continuous {E}uler flows.
\newblock {\em Invent. Math.} \textbf{193} (2), 377-407, 2013.




\bibitem[DGHT19]{DGHT19}
A. Deya, M. Gubinelli, M. Hofmanov\'a, and S. Tindel.
\newblock A priori estimates for rough PDEs with application to rough conservation laws.
\newblock {\em Journal of Functional Analysis} \textbf{276} (12), 3577-3645, 2019.

\bibitem[DPM87]{DPM87} R. J. DiPerna, A. J. Majda, Oscillations and concentrations in weak solutions of
the incompressible fluid equations. {\em Commun. Math. Phys.} {\bf 108} (4), 667-689, 1987.




\bibitem[Ed65]{Ed65} R. E. Edwards, Functional Analysis, Theory and Applications, Holt, Rinehart and Winston, 1965.


\bibitem[FHLN20]{FHLN20}
F. Flandoli, M. Hofmanov\'a, D. Luo, and T. Nilssen.
\newblock Global well-posedness of the 3D Navier-Stokes equations perturbed by a deterministic vector field,
\newblock arXiv:2004.07528, 2020.

\bibitem[FL19]{FL19}
F.~Flandoli, D.~Luo.
\newblock  High mode transport noise improves vorticity blow-up control in 3D Navier-Stokes equations
, arXiv:1910.05742,   2019.

\bibitem[FR08] {FR08} F. Flandoli, M. Romito, Markov selections for the 3D stochastic Navier--Stokes equations, \emph{Probab. Theory Relat. Fields} \textbf{140}, 407-458, 2008.

\bibitem[GHV14]{GHV14}
N. Glatt-Holtz and V. Vicol,
\newblock {\em Local and global existence of smooth solutions for the
stochastic Euler equations with multiplicative noise.} Ann. Probab. \textbf{42}(1), 80-145, 2014.

    \bibitem[GRZ09]{GRZ09} B. Goldys, M. R\"{o}ckner and X. C. Zhang, Martingale solutions and Markov selections for stochastic partial differential equations, \emph{Stochastic Processes Appl.} \textbf{119}, 1725-1764, 2009.

%


\bibitem[HZZ19]{HZZ19}
M.~Hofmanov{\'a}, R. Zhu, X. Zhu.
\newblock Non-uniqueness in law of stochastic 3D Navier--Stokes equations, arXiv:1912.11841, 2019.

\bibitem[Ka17]{K17}
O. Kallenberg.
\newblock Random measures, theory and applications.
\newblock Springer International Publishing, 2017.

\bibitem[Ki09]{Kim09} J. U. Kim. Existence of a local smooth solution in probability to the stochastic Euler equations in R3. {\em J. Funct. Anal.} {\bf 256}(11), 3660-3687, 2009.

\bibitem[Kr73]{K73}
N.~V. Krylov.
\newblock The selection of a {M}arkov process from a {M}arkov system of
  processes, and the construction of quasidiffusion processes.
\newblock {\em Izv. Akad. Nauk SSSR Ser. Mat.}, {\bf 37}:691--708, 1973.



         \bibitem[Li96]{Li96} P.-L. Lions, Mathematical topics in fluid mechanicas. vol. 1. Incompressible models, Oxford Lecture Ser. Math. Appl., vol. 3, Clarendom Press, Oxford 1996.

\bibitem[MNRR96]{MNRR96} J. M\'alek, J. Necas, M. Rokyta, and M. R{\accent23u}zicka. Weak and measure-valued solutions to evolutionary PDEs, volume 13. CRC Press, 1996.

\bibitem[MV00]{MV00} R. Mikulevicius and G. Valiukevicius. On stochastic Euler equation in
$R^{d}$. {\em Electron. J. Probab.} {\bf 5}(6), 20, 2000.

        \bibitem[Ro08]{Ro08}
         M. Romito.
         \newblock Analysis of equilibrium states of Markov solutions to the 3D Navier-Stokes equations
driven by additive noise.
         \newblock {\em J. Stat. Phys.} \textbf{131}(3), 415-444, 2008.
%
             \bibitem[RZZ14]{RZZ14}M. R\"{o}ckner, R. Zhu, X. Zhu, Local existence and non-explosion of solutions for  stochastic fractional partial differential equations driven by multiplicative noise, \emph{Stochastic Processes and their Applications} \textbf{124}, 1974-2002,  2014.
    \bibitem[SV79]{SV79}D. W. Stroock, S. R. S. Varadhan, Multidimensional Diff"usion Processes, Springer-Verlag, Berlin,
1979.

\bibitem[Ya20a]{Ya20a} K. Yamazaki. Non-uniqueness in law for two-dimensional Navier-Stokes equations with diffusion weaker than a full Laplacian. arXiv:2008.04760, 2020.
\bibitem[Ya20b]{Ya20b} K. Yamazaki. Remarks on the non-uniqueness in law of the Navier-Stokes equations up to the J.-L. Lions' exponent. arXiv:2006.11861, 2020.

\end{thebibliography}
\end{document}